\newtheorem{thm}{Theorem}[section]
\newtheorem{lem}[thm]{Lemma}
\newtheorem{prop}[thm]{Proposition}
\newtheorem{cor}[thm]{Corollary}
\theoremstyle{definition}
\newtheorem{dfn}[thm]{Definition}
\theoremstyle{remark}
\newtheorem{rem}[thm]{Remark}
\numberwithin{equation}{section}
	\newcommand{\R}{\mathbb{R}}			
	\newcommand{\C}{\mathbb{C}}			
	\newcommand{\Z}{\mathbb{Z}}				
	\newcommand{\Q}{\mathbb{Q}}       
	\renewcommand{\H}{\mathbb{H}}
	\newcommand{\abs}[1]{\left|#1\right|} 		
	\DeclareMathOperator{\tr}{tr}
    \renewcommand{\Re}{\text{Re}\,}
    \renewcommand{\Im}{\text{Im}\,}
	\DeclareMathOperator{\SL}{SL}
	\DeclareMathOperator{\sgn}{sgn}
	\DeclareMathOperator{\id}{id}
	\DeclareMathOperator{\e}{\mathfrak{e}}
	\DeclareMathOperator{\reg}{reg}
	\DeclareMathOperator{\erfc}{erfc}
	\DeclareMathOperator{\Mp}{Mp}
	\DeclareMathOperator{\CT}{CT}
	\DeclareMathOperator{\calF}{\mathcal{F}}
	\DeclareMathOperator{\calQ}{\mathcal{Q}}
	\DeclareMathOperator{\Iso}{Iso}
	\DeclareMathOperator{\spann}{span}
	\newcommand{\IM}{I^{\mathrm{M}}}
\newcommand{\ISh}{I^{\mathrm{Sh}}}
\DeclareMathOperator{\E}{\mathcal{E}}
\DeclareMathOperator{\Ei}{Ei}
\begin{document}

\title[]{Shintani theta lifts of harmonic Maass forms}

\author{Claudia Alfes-Neumann and Markus Schwagenscheidt}
\maketitle

\begin{abstract}
	We define a regularized Shintani theta lift which maps weight $2k+2$ ($k \in \Z, k \geq 0$) harmonic Maass forms for congruence subgroups to (sesqui-)harmonic Maass forms of weight $3/2+k$ for the Weil representation of an even lattice of signature $(1,2)$. We show that its Fourier coefficients are given by traces of CM values and regularized cycle integrals of the input harmonic Maass form. Further, the Shintani theta lift is related via the $\xi$-operator to the Millson theta lift studied in our earlier work. We use this connection to construct $\xi$-preimages of Zagier's weight $1/2$ generating series of singular moduli and of some of Ramanujan's mock theta functions. \end{abstract}

\setcounter{tocdepth}{1}
\tableofcontents

\section{Introduction}

\subsection{The Shintani theta lift} The classical Shimura-Shintani correspondence \cite{sh, shintani}, as refined by Kohnen \cite{kohnen80,kohnen82}, establishes a Hecke-equivariant isomorphism between the space $S_{2k+2}$ of cusp forms of integral weight $2k+2$ for $\Gamma=\SL_{2}(\Z)$ and the space $S_{3/2+k}$ of cusp forms of half-integral weight $3/2 + k$ for $\Gamma_{0}(4)$ satisfying the Kohnen plus space condition, for all integers $k \geq 0$. For higher level $N$ the correspondence gives isomorphisms between spaces of newforms of weight $2k+2$ for $\Gamma_{0}(N)$ and cuspidal Jacobi newforms of weight $k+2$ and index $N$, see \cite{gkz, skoruppazagier}. We restrict to level $N=1$ for simplicity in the introduction, but we will work with arbitrary congruence subgroups of $\SL_{2}(\Z)$ in the body of the paper by using the language of vector valued modular forms for the Weil representation of an even lattice of signature $(1,2)$. 

The isomorphism $S_{2k+2} \cong S_{3/2+k}$ can be realized by taking suitable linear combinations of Shintani theta lifts 
\begin{align*}
\ISh_{\Delta}:S_{2k+2} \to S_{3/2+k}, \qquad \ISh_{\Delta}(G,\tau) = \int_{\Gamma\backslash \H}G(z)\overline{\Theta_{Sh,\Delta}(\tau,z)}y^{2k+2} \frac{dx \, dy}{y^{2}},
\end{align*}
where $\tau = u+iv, z = x + iy \in \H$, and $\Theta_{Sh,\Delta}(\tau,z)$ is the non-holomorphic Shintani theta function, twisted by a fundamental discriminant $\Delta$ with $(-1)^{k+1}\Delta > 0$. It is explicitly given by
\begin{align*}
\Theta_{Sh,\Delta}(\tau,z) = 2v^{1/2} \! \! \! \! \! \!\sum_{\substack{D \in \Z \\ \sgn(\Delta)D \equiv 0,1 (4)}}\sum_{Q \in \calQ_{|\Delta|D}}\chi_{\Delta}(Q)\frac{Q(\bar{z},1)^{k+1}}{|\Delta|^{(k+1)/2}y^{2k+2}}e^{-4\pi v \frac{|Q(z,1)|^{2}}{|\Delta|y^{2}}}e^{-2\pi i D \tau},
\end{align*}
where $\calQ_{|\Delta|D}$ denotes the set of all integral binary quadratic forms $Q(x,y) = ax^{2} + bxy + cy^{2}$ (with $a,b,c \in \Z$) of discriminant $|\Delta|D = b^{2}-4ac$, and $\chi_{\Delta}$ is the generalized genus character from \cite{gkz} which is defined for $Q \in \calQ_{|\Delta|D}$ by
\begin{align*}
\chi_{\Delta}(Q) = \begin{dcases}
\left(\frac{\Delta}{n}\right), & \text{if $(a,b,c,\Delta) = 1$ and $Q$ represents $n$ with $(n,\Delta) = 1$,} \\
0, & \text{otherwise.}
\end{dcases}
\end{align*} 
The Shintani theta function transforms like a modular form of weight $2k+2$ for $\Gamma$ in $z$ and, by Poisson summation, its complex conjugate transforms like a modular form of weight $3/2+k$ for $\Gamma_{0}(4)$ in $\tau$. Further, as a function of $z$ it is of moderate growth at the cusp $\infty$. Thus the theta integral is well-defined and converges absolutely for $G \in S_{2k+2}$. 

The Shintani theta lift of cusp forms has first been studied by Shintani \cite{shintani}, Niwa \cite{niwa} and Cipra \cite{cipra} to refine the original correspondence of Shimura \cite{sh}.
Its Fourier expansion can be computed from the theta integral and is given by the generating series of twisted traces of geodesic cycle integrals of $G \in S_{2k+2}$,
\begin{align*}
\ISh_{\Delta}(G,\tau) = \frac{(-1)^{k}}{|\Delta|^{(k+1)/2}}\! \! \! \! \! \!\sum_{\substack{D > 0 \\ \sgn(\Delta)D \equiv 0,1(4)}} \left(\sum_{Q \in \calQ_{|\Delta|D}/\Gamma}\chi_{\Delta}(Q)\int_{\Gamma_{Q} \backslash c_{Q}}G(z)Q(z,1)^{k}dz \right) e^{2\pi i D \tau},
\end{align*}
where $c_{Q}$ denotes the geodesic in $\H$ defined by $a|z|^{2} + bx + c = 0$, which is a semi-circle oriented from $(-b-\sqrt{|\Delta|D})/2a$ to $(-b+\sqrt{|\Delta|D})/2a$ if $a \neq 0$, and the vertical line from $-\frac{c}{b}$ to $i\infty$ if $b > 0$ and from $i\infty$ to $-\frac{c}{b}$ if $b < 0$, if $a = 0$. On the one hand, this shows that $\ISh_{\Delta}(G,\tau)$ really is a cusp form, and on the other hand, it gives a conceptual proof for the modularity of an interesting generating series, which is in fact one of the major applications of similarly defined theta lifts. 

Recently, the Shintani theta lift has been generalized to weakly holomorphic modular forms by Bringmann, Guerzhoy and Kane \cite{briguka2,briguka}, again yielding cusp forms, and to differentials of the third kind by Bruinier, Funke, Imamoglu and Li \cite{bifl}, giving real analytic modular forms of weight $3/2$ whose holomorphic parts are generating series of traces of winding numbers of geodesics with respect to the poles of the input differential. The aim of the present work is to generalize the Shintani theta lift to harmonic Maass forms. Recall from \cite{brfu04} that a smooth function $G : \H \to \C$ is called a harmonic Maass form of weight $\kappa \in \Z$ for $\Gamma$ if it is annihilated by the invariant Laplace operator 
 \begin{align}\label{eq Laplace operator}
 \Delta_{\kappa}=-y^2\left(\frac{\partial^2}{\partial x^2}+\frac{\partial^2}{\partial y^2}\right)
+i\kappa y\left(\frac{\partial}{\partial x}+i\frac{\partial}{\partial y}\right),
\end{align}
transforms like a modular form of weight $\kappa$ under $\Gamma$, and is at most of linear exponential growth at the cusp $\infty$. A harmonic Maass form $G$ of weight $\kappa$ splits into a holomorphic and a non-holomorphic part $G = G^{+}+G^{-}$ having Fourier expansions of the form
 \begin{align}\label{eq Fourier expansion harmonic Maass form}
 G^{+}(z) = \sum_{\substack{n \in \Z \\ n \gg -\infty}}a_{G}^{+}(n)e^{2\pi i n z}, \quad G^{-}(z) = a_{G}^{-}(0)y^{1-\kappa}+\sum_{\substack{n \in \Z \setminus \{0\} \\ n \ll \infty}}a_{G}^{-}(n)\E_{\kappa}(4\pi n y)e^{2\pi i nz},
 \end{align}
 with coefficients $a_{G}^{\pm}(n) \in \C$, and $y^{1-\kappa}$ replaced by $\log(y)$ if $\kappa = 1$. Here the function $\E_{\kappa}(y)$ could be any antiderivative of $e^{y}(-y)^{-\kappa}$, and for our purposes it is convenient to set
\begin{align*}
		\E_{\kappa}(y) = \begin{dcases}
		\Gamma(1-\kappa,-y), & \text{if } \kappa \leq 0, \\
		\frac{(-1)^{\kappa+1}}{(\kappa-1)!}\left(e^{y}\sum_{j=0}^{\kappa-2}y^{-j-1}j!-\Ei(y)\right),& \text{if } \kappa > 0,
		\end{dcases}
\end{align*}
where $\Gamma(s,y) = \int_{y}^{\infty}e^{-t}t^{s-1}dt$ is the incomplete Gamma function and $\Ei(y) = -\int_{-y}^{\infty}e^{-t}\frac{dt}{t}$ is the exponential integral, which is defined using the Cauchy principal value for $y > 0$. The space of harmonic Maass forms of weight $\kappa$ is denoted by $H_{\kappa}$. Harmonic Maass forms of half-integral weight for $\Gamma_{0}(4)$ satisfying the Kohnen plus space condition are defined analogously. We also require 
  the antilinear differential operator
 \[
 \xi_{\kappa}G = 2iy^{\kappa}\overline{\frac{\partial}{\partial \bar{z}}G(z)},
 \]
 which defines a surjective map $H_{\kappa} \to M_{2-\kappa}^{!}$ and acts on the Fourier expansion of $G \in H_{\kappa}$ by
\[
\xi_{\kappa}G = \xi_{\kappa}G^{-} = (1-\kappa)\overline{a_{G}^{-}(0)}-\sum_{\substack{n \gg 0 \\ n \neq 0}}(4\pi n)^{1-\kappa}\overline{a_{G}^{-}(-n)}e^{2\pi i n z}.
\]
It is related to the Laplace operator by $\Delta_{\kappa} = -\xi_{2-\kappa}\xi_{\kappa}$. We let $H_{\kappa}^{+}$ be the subspace consisting of those harmonic Maass forms in $H_{\kappa}$ that map to a cusp form in $S_{2-\kappa}$ under $\xi_{\kappa}$.

Since $\Theta_{Sh,\Delta}(\tau,z)$ is of moderate growth at $\infty$, the theta integral does usually not converge for $G \in H_{2k+2}$, and has to be regularized in a suitable way to give it a meaning. Following Borcherds \cite{borcherds}, we define the regularized Shintani theta lift of $G \in H_{2k+2}$ by
 \begin{align*}
 \ISh_{\Delta}(G,\tau) = \CT_{s = 0}\left[\lim_{T \to \infty}\int_{\calF_{T}}G(z)\overline{\Theta_{Sh,\Delta}(\tau,z)}y^{2k+2-s}\frac{dx\, dy}{y^{2}}\right],
 \end{align*}
 where $\calF_{T} = \{z \in \H: |x| \leq 1/2, |z| \geq 1, y \leq T\}$ is a truncated fundamental domain for $\Gamma \backslash \H$ and $\CT_{s = 0}F(s)$ denotes the constant term in the Laurent expansion at $s = 0$ of a meromorphic function $F(s)$. Our first main result is then the following.
 
 \begin{thm}
 	The regularized Shintani theta lift $\ISh_{\Delta}(G,\tau)$ of a harmonic Maass form $G \in H_{2k+2}$ exists and defines a harmonic Maass form in $H_{3/2+k}$. If $G \in M_{2k+2}^{!}$ is a weakly holomorphic modular form then $\ISh_{\Delta}(G,\tau) \in M_{3/2+k}$ is a holomorphic modular form, and if in addition $a_{G}^{+}(0) = 0$ then $\ISh_{\Delta}(G,\tau) \in S_{3/2+k}$ is a cusp form.
 \end{thm}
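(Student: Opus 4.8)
The plan is to establish the theorem in three stages: (1) show the regularized integral exists, (2) identify what modular object it produces in~$\tau$, and (3) pin down its image under~$\xi_{3/2+k}$ and~$\Delta_{3/2+k}$ to conclude harmonicity and the specializations. For the existence, I would analyze the integrand near the cusp~$\infty$ of~$\Gamma\backslash\H$. Splitting $G=G^++G^-$ and inserting the Fourier expansions \eqref{eq Fourier expansion harmonic Maass form}, the non-constant Fourier modes of $G$ are $O(e^{-\varepsilon y})$ and pair harmlessly with the Gaussian decay of~$\Theta_{Sh,\Delta}$; the potential divergence comes only from the constant term $a_G^+(0)+a_G^-(0)y^{1-2k-2}$ of~$G$ against the contribution of the quadratic forms with $a=0$ (the ones supported at the cusp) in the theta kernel. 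Against these terms the $y$-integral produces, after the $\int_{-1/2}^{1/2}dx$, a term behaving like a power of~$T$ (or $\log T$) times a function of~$s$, and the Laplace-type smoothing $y^{-s}$ together with $\CT_{s=0}$ extracts a finite value exactly as in Borcherds \cite{borcherds}. Making this precise is largely bookkeeping once one has the explicit growth of $\Theta_{Sh,\Delta}(\tau,z)$ at~$\infty$ in~$z$, which I would record as a lemma (unfolding the $a=0$ forms gives a partial theta series in~$\tau$ plus an exponentially small remainder).

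For step~(2), the modularity in~$\tau$ is automatic: since $\overline{\Theta_{Sh,\Delta}}$ transforms with weight $3/2+k$ in~$\tau$ and the operations $\int_{\calF_T}$, $\lim_{T\to\infty}$, $\CT_{s=0}$ are all taken pointwise in~$\tau$ and commute with the weight-$(3/2+k)$ slash action, the lift inherits weight $3/2+k$ for $\Gamma_0(4)$ (and the Kohnen plus-space condition follows from the support of~$D$ modulo~$4$ in the theta series). Growth at the cusps of~$\Gamma_0(4)$ in~$\tau$: one must check at most linear exponential growth, which again comes down to controlling the $D<0$ and $D=0$ Fourier coefficients; these are governed by the same near-cusp analysis.

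The heart is step~(3): applying~$\xi_{3/2+k}$ in the variable~$\tau$ to the (regularized) theta integral and commuting it inside. The key input is a pair of differential equations for the Shintani kernel relating $\xi_{\tau}$ (resp.\ $\Delta_{\tau}$) applied to $\overline{\Theta_{Sh,\Delta}(\tau,z)}$ to $\xi_{z}$, $\Delta_{z}$, or to the \emph{Millson} theta kernel $\overline{\Theta_{M,\Delta}(\tau,z)}$ studied in the authors' earlier work --- the relation $\xi_{\tau,3/2+k}\overline{\Theta_{Sh,\Delta}}$ equals (a constant times) $\overline{\Theta_{M,\Delta}}$ up to a $z$-derivative, which by Stokes' theorem on $\calF_T$ contributes only boundary terms at height~$T$ that are handled by $\CT_{s=0}\lim_{T\to\infty}$. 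Feeding $\Delta_{2k+2}G=0$ through this, one gets that $\Delta_{3/2+k}$ annihilates $\ISh_\Delta(G,\tau)$ away from these boundary contributions; combined with the growth bound from step~(2), this gives $\ISh_\Delta(G,\tau)\in H_{3/2+k}$. Finally, if $G\in M_{2k+2}^!$ then $\xi_{2k+2}G=0$, so $G^-=0$, the non-holomorphic part of the kernel pairing vanishes, and a direct computation of the Fourier expansion (as in the cusp-form case, now including the $a_G^+(n)$ with $n\le 0$ via regularized cycle integrals) shows the lift is holomorphic, i.e.\ in $M_{3/2+k}$; the vanishing of the constant term in~$\tau$ --- hence cuspidality --- is equivalent to $a_G^+(0)=0$, since that constant term is a multiple of $a_G^+(0)$.

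I expect the main obstacle to be the interplay of the regularization with Stokes' theorem in step~(3): one must show that the constant-term-in-$s$ and $T\to\infty$ limiting procedures can be interchanged with $\xi_\tau$ and with the application of Stokes, and that the arising boundary integrals over the line $y=T$ --- which do \emph{not} individually vanish as $T\to\infty$ for a harmonic Maass form input --- reassemble correctly into the (holomorphic plus non-holomorphic) Fourier expansion of $\ISh_\Delta(G,\tau)$ with the claimed traces of CM values and regularized cycle integrals as coefficients. This is exactly the point where harmonic Maass forms behave worse than cusp forms or weakly holomorphic forms, and it is where the explicit near-cusp expansion of $\Theta_{Sh,\Delta}$ and careful tracking of the $s$-dependence do the real work.
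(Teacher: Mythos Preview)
Your proposal is correct and follows essentially the same route as the paper: existence via the growth of $\Theta_{Sh,\Delta}$ at the cusp (the paper's Proposition~\ref{proposition convergence shintani lift}), modularity for free from the kernel, and harmonicity via the differential equations linking the Shintani and Millson kernels (Lemmas~\ref{lm:iddelta}, \ref{lm:relshinmillson}) combined with Stokes' theorem (Propositions~\ref{proposition xi diagram}, \ref{proposition Delta}); the specializations for $G\in M_{2k+2}^!$ and $a_G^+(0)=0$ then drop out of the explicit Fourier expansion, whose holomorphic part is supported on $D\ge 0$ with constant term proportional to $a_G^+(0)L_\Delta(-k)$. The one ingredient you do not name, and which the paper needs to actually \emph{compute} that Fourier expansion (and hence to verify the growth condition and the absence of a principal part), is the explicit $\Delta_{2k+2}$-preimage $\eta(Q,\tau,z)$ of the Shintani Schwartz function; without it the ``direct computation'' you allude to would be hard to carry out, though for the bare statement of the theorem the Stokes-based arguments you sketch do suffice.
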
 
 
 \begin{rem}
 	For $k = 0$ and higher level the Shintani theta lift of a harmonic Maass form of weight $2$ is in general not harmonic, but maps to a linear combination of unary theta series of weight $3/2$ under $\Delta_{3/2}$, i.e., it is a sesquiharmonic Maass form. Note that there are no unary theta series of weight $3/2$ for $\Gamma_{0}(4)$, and thus the Shintani theta lift is harmonic in this case.
\end{rem}

The above result follows from the basic properties of the Shintani theta lift established in Section~\ref{section theta lift} together with its Fourier expansion given in Theorem \ref{theorem fourier expansion shintani}.
 
 In our earlier work \cite{alfesschwagen} we investigated the so-called Millson theta lift $\IM_{\Delta}(F,\tau)$ from weight $-2k$ to weight $1/2-k$ harmonic Maass forms, with $F \in H_{-2k}^{+}$ assumed to map to a cusp form under $\xi_{-2k}$. We showed that it is related to the Shintani theta lift via the differential equation
 \begin{align*}
  \xi_{1/2-k,\tau}\IM_{\Delta}(F,\tau) = -4^{k}\sqrt{|\Delta|}\ISh_{\Delta}(\xi_{-2k,z}F,\tau).
 \end{align*}
 Note that both sides of the above equation are cusp forms. Since $\xi: H_{-2k}^{+} \to S_{2k+2}$ is surjective, this relation can be used to construct $\xi$-preimages of cusp forms of weight $3/2+k$ arising as Shintani theta lifts of cusp forms of weight $2k+2$. 
  Conversely, we show the following relation which implies that the Shintani theta lift can be used to construct $\xi$-preimages of weakly holomorphic modular forms of weight $1/2-k$ arising as Millson theta lifts of weakly holomorphic modular forms of weight $-2k$.
 
 \begin{prop}\label{prop:xidiagramintroduction}
 	The Millson and the Shintani theta lift satisfy the differential equation
	\[
	\xi_{3/2+k,\tau}\ISh_{\Delta}(G,\tau) = -\frac{1}{4^{k+1}\sqrt{|\Delta|}}\IM_{\Delta}(\xi_{2k+2,z}G,\tau)
	\]
	for every harmonic Maass form $G \in H_{2k+2}$.
 \end{prop}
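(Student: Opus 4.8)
The plan is to obtain the differential equation by unfolding the operator $\xi_{3/2+k,\tau}$ into the regularized theta integral defining $\ISh_\Delta(G,\tau)$ and then invoking a pointwise identity between the Shintani and Millson theta kernels, following the strategy used in \cite{alfesschwagen} for the companion relation $\xi_{1/2-k,\tau}\IM_\Delta(F,\tau) = -4^k\sqrt{|\Delta|}\,\ISh_\Delta(\xi_{-2k,z}F,\tau)$.

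First I would check that $\xi_{3/2+k,\tau}$ commutes with the regularization $\CT_{s=0}\lim_{T\to\infty}$. Writing $d\mu(z)=\frac{dx\,dy}{y^{2}}$, for $\Re(s)$ large the integral $\int_{\calF}G(z)\overline{\Theta_{Sh,\Delta}(\tau,z)}\,y^{2k+2-s}\,d\mu(z)$ converges absolutely and locally uniformly in $\tau$ and is holomorphic in $s$, so one may differentiate under the integral sign; the meromorphic continuation in $s$ and the uniform control of the $T\to\infty$ limit needed to commute $\partial_{\bar\tau}$ past $\lim_{T\to\infty}$ and $\CT_{s=0}$ are exactly what is established in Section~\ref{section theta lift}. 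Since $\xi_{3/2+k,\tau}$ is antilinear and $v$ does not depend on $z$, this gives
\begin{equation*}
\xi_{3/2+k,\tau}\ISh_\Delta(G,\tau) = \CT_{s=0}\Bigl[\lim_{T\to\infty}\int_{\calF_T}\overline{G(z)}\,\bigl(\xi_{3/2+k,\tau}\overline{\Theta_{Sh,\Delta}(\tau,z)}\bigr)\,y^{2k+2-s}\,d\mu(z)\Bigr],
\end{equation*}
and one checks that the new integrand is again $\Gamma$-invariant in $z$.

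The crucial input is then a pointwise identity for the theta kernels. Because $\Theta_{Sh,\Delta}$ and $\Theta_{M,\Delta}$ arise from Schwartz functions in the Weil representation of the lattice that are interchanged, up to a constant, by the $\mathfrak{sl}_2$-lowering operator acting in the $z$-variable, a direct computation with the explicit Gaussians should yield a relation of the shape
\begin{equation*}
\xi_{3/2+k,\tau}\overline{\Theta_{Sh,\Delta}(\tau,z)} = c\cdot\xi_{-2k,z}\Theta_{M,\Delta}(\tau,z)
\end{equation*}
(possibly up to a term that is a total $z$-derivative, which only affects boundary contributions), where both sides transform with weight $1/2-k$ in $\tau$ and weight $2k+2$ in $z$, and the constant $c$ is forced by the normalizations of the two theta functions to be $\tfrac{1}{4^{k+1}\sqrt{|\Delta|}}$. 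Substituting this and using $\xi_{-2k,z}\Theta_{M,\Delta} = 2iy^{-2k}\,\partial_z\overline{\Theta_{M,\Delta}}$, the powers of $y$ collapse against $d\mu(z)$ and the integrand becomes a constant multiple of $y^{-s}\,\overline{G(z)}\,\partial_z\overline{\Theta_{M,\Delta}(\tau,z)}\,dz\wedge d\bar z$.

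Finally I would apply Stokes' theorem on $\calF_T$, writing this $2$-form as (a multiple of) $d\bigl(y^{-s}\,\overline{G(z)}\,\overline{\Theta_{M,\Delta}(\tau,z)}\,d\bar z\bigr)$ minus the terms in which $\partial_z$ hits $\overline{G(z)}$ or $y^{-s}$. The term with $\partial_z\overline{G}=\overline{\partial_{\bar z}G}$ recombines, through the definition of $\xi_{2k+2,z}$, into $y^{-2k-s}\,(\xi_{2k+2,z}G)(z)\,\overline{\Theta_{M,\Delta}(\tau,z)}$, whose regularized integral is precisely $\IM_\Delta(\xi_{2k+2,z}G,\tau)$; here one uses that $\xi_{2k+2,z}G\in M^{!}_{-2k}\subset H^{+}_{-2k}$, so that the Millson lift is defined. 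What remains is the boundary integral over $\partial\calF_T$, whose vertical sides cancel by $\Gamma$-periodicity and whose circular arc cancels with itself, leaving only the horizontal piece $y=T$, together with the term coming from $\partial_z(y^{-s})$, which is proportional to $s$. These combine into a part that grows polynomially in $T$ and is killed by $\CT_{s=0}$, plus a finite remainder involving only the constant terms at $\infty$ of $G$ and of $\Theta_{Sh,\Delta}$; showing that this remainder vanishes is where the genuine work lies and is the main obstacle. Granting it, letting $T\to\infty$ and extracting the constant term in $s$ gives the asserted identity. The kernel identity itself is a routine, if lengthy, Gaussian computation, and the Stokes step is standard once the cuspidal growth of $G\cdot\Theta_{Sh,\Delta}$ has been recorded; alternatively one could bypass the theta integral and verify the identity coefficientwise by comparing the Fourier expansion of $\ISh_\Delta(G,\tau)$ from Theorem~\ref{theorem fourier expansion shintani} with that of the Millson lift in \cite{alfesschwagen}.
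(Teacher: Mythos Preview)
Your approach is essentially the paper's: pass $\xi_{3/2+k,\tau}$ inside the theta integral via the kernel identity $\xi_{3/2+k,\tau}\overline{\Theta_{Sh}} = \tfrac{\sqrt{N}}{2}\,\xi_{-2k,z}\overline{\Theta_{M}}$ (Lemma~\ref{lm:relshinmillson}), then use Stokes' theorem to trade $\xi_{-2k,z}\overline{\Theta_M}$ against $\xi_{2k+2,z}G$, producing the Millson integrand plus a boundary term along $y=T$. The difference lies in how the regularization and the boundary are handled, and this is exactly where your sketch is loose.

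The paper does \emph{not} attempt to commute $\xi_{3/2+k,\tau}$ with $\CT_{s=0}$; that step is delicate, since one would need the Laurent coefficients at $s=0$ to be $\bar\tau$-differentiable and the constant-term operation to commute with $\partial_{\bar\tau}$. Instead it first replaces the $s$-regularized definition by the explicit limit formula of Proposition~\ref{proposition convergence shintani lift}, writing $\ISh(G,\tau)$ as $\lim_{T\to\infty}$ of the truncated integral minus the explicit correction terms $\tfrac{1}{\sqrt N}\sum_\ell \varepsilon_\ell\,\overline{\Theta_{\ell,k+1}(\tau)}\bigl(a_\ell^+(0)\tfrac{T^{k+1}}{k+1}-a_\ell^-(0)\tfrac{T^{-k}}{k}\bigr)$. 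One then applies $\xi_{3/2+k,\tau}$ termwise; on the correction terms it acts via $\xi_{3/2+k}\overline{\Theta_{\ell,k+1}(\tau)} = -\tfrac{Nk(k+1)}{4\pi}v^{k-1/2}\Theta_{\ell,k-1}(\tau)$ (which vanishes for $k=0$). The boundary integral from Stokes does \emph{not} vanish: it is evaluated using the growth estimate of Proposition~\ref{prop:growththeta}(1) for $\Theta_M$ at the cusps, and it exactly cancels the $\xi$-images of the correction terms as $T\to\infty$. So your ``finite remainder'' is not zero on its own; the cancellation is between the boundary at $y=T$ and the explicit subtractions, a mechanism that is invisible if you stay in the $\CT_{s=0}$ formulation. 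Your extra term from $\partial_z(y^{-s})$ is an artifact of carrying the $s$-regularization through Stokes and does not appear in the paper's argument.
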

 
 The proof follows from an analagous differential equation satisfied by the Millson and the Shintani theta functions and an application of Stokes' theorem. See Proposition \ref{proposition xi diagram} for the general result. We will illustrate this result in Section \ref{section application fD} below. 
 
 \subsection{Traces of CM values and regularized cycle integrals of harmonic Maass forms}
 The second main result of this work is the computation of the Fourier expansion of the Shintani theta lift. It turns out that it is the simultaneous generating series of traces of (suitably regularized) cycle integrals and traces of CM values of (iterated derivatives of) $G \in H_{2k+2}$, whose definition we now explain. 
 
 For $Q \in Q_{|\Delta|D}$ with $D < 0$ we let $z_{Q} \in \H$ be the CM point associated to $Q$ which is characterized by $Q(z_{Q},1) = 0$. In this case the stabilizer of $Q$ in $\overline{\Gamma} = \Gamma/\{\pm 1\}$ is finite. For $D < 0$ such that $\sgn(\Delta)D$ is a discriminant and a $\Gamma$-invariant function $F$ we define a twisted trace function by
 \[
 \tr_{\Delta}^{+}(F,D) = \sum_{Q \in \calQ_{|\Delta|D}^{+}/\Gamma}\chi_{\Delta}(Q)\frac{F(z_{Q})}{|\overline{\Gamma}_{Q}|},
 \]
 where $\calQ_{|\Delta|D}^{+}$ consists of all positive definite ($a > 0$) forms in $\calQ_{|\Delta|D}$. If $F = 1$ and $\Delta = 1$ then $\tr_{1}^{+}(1,D) = H(D)$ is a Hurwitz class number.

 For $D > 0$ we let $c_{Q}$ be the geodesic in $\H$ defined by $a|z|^{2} + bx + c = 0$, with the orientation as explained above. If $|\Delta|D$ is a square then the stabilizer of $Q$ in $\overline{\Gamma}$ is trivial and $c_{Q}$ is an infinite geodesic in the modular curve $\Gamma \backslash \H$, and if $|\Delta|D$ is not a square then the stabilizer of $Q$ in $\overline{\Gamma}$ is infinite cyclic and $\Gamma_{Q} \backslash c_{Q}$ is a closed geodesic in $\Gamma \backslash \H$. For $D > 0$ such that $\sgn(\Delta)D$ is a discriminant and $|\Delta|D$ is not a square we define a twisted trace function of a harmonic Maass form $G \in H_{2k+2}$ by
 \[
 \tr_{\Delta}(G,D) = \sum_{Q \in \calQ_{|\Delta|D}/\Gamma}\chi_{\Delta}(Q)\int_{\Gamma_{Q}\backslash c_{Q}}G(z)Q(z,1)^{k}dz.
 \]
 If $|\Delta|D$ is a square then the cycle integrals appearing above typically diverge since $c_{Q}$ is infinite in $\Gamma \backslash \H$ and $G$ is of exponential growth at the cusp $\infty$. To explain the appropriate regularization we assume for simplicity that $Q(x,y) = xy$, so that $c_{Q}$ is the positive imaginary axis. For the general definition we refer the reader to Section~\ref{section regularized cycle integrals}. If we suppose for the moment that $G \in S_{2k+2}$ is a cusp form, then we can write
 \[
 \int_{c_{Q}}G(z)Q(z,1)^{k}dz = i^{k+1}\int_{0}^{\infty}G(iy)y^{k}dy = i^{k+1}(1+(-1)^{k+1})\lim_{T \to \infty} \int_{1}^{T}G(iy)y^{k}dy.
 \]
 For fixed $T > 0$ the last integral also exists for a harmonic Maass form $G \in H_{2k+2}$. Further, by plugging in the Fourier expansion of $G \in H_{2k+2}$, the integral can explicitly be evaluated (see Lemma \ref{lemma regularized cycle integral}). By subtracting all the parts which are growing as $T \to \infty$, we obtain the following definition.
 
 \begin{dfn}
 	Suppose that $Q(x,y) = xy$, and let $G \in H_{2k+2}$. Pick a positive real number $T > 0$. Then for $k > 0$ the regularized cycle integral of $G$ along $c_{Q} = i\R_{> 0}$ is defined by
 	\begin{align*}
		&\int_{c_{Q}}^{\reg}G(z)Q(z,1)^{k}dz = i^{k+1}(1+(-1)^{k+1})\\
		&\quad \times \bigg(\int_{1}^{T}G(iy)y^{k}dy - a_{G}^{+}(0)\frac{T^{k+1}}{k+1}  + \sum_{n \neq 0}a_{G}^{+}(n)\frac{\Gamma(k+1,2\pi n T)}{(2\pi n )^{k+1}}+ a_{G}^{-}(0)\frac{T^{-k}}{k} \\
		&\quad \quad + \sum_{n \neq 0}a_{G}^{-}(n)\bigg(\E_{2k+2}(4\pi nT)\frac{\Gamma(k+1,2\pi n T)}{(2\pi n)^{k+1}}  -\frac{2^{-2k-1}k!}{(2\pi n)^{k+1}}\sum_{j=0}^{k} \frac{(-1)^{j}}{j!}\E_{2k+2-j}(2\pi nT)\bigg)\bigg).
	\end{align*}
	For $k = 0$ it is defined by the same formula\footnote{For even $k$ the above regularized cycle integral actually vanishes for trivial reasons. Nevertheless, this is not true for arbitrary quadratic forms of square discriminant or harmonic Maass forms of higher level, and the above formula gives the correct idea.}, but $\frac{T^{-k}}{k}$ has to be replaced by $-\log(T)$. The regularized cycle integral is independent of the choice of $T > 0$.
 \end{dfn}
 
 \begin{rem}
 	\begin{enumerate}
		\item By plugging in $T = 1$ the formula further simplifies. If $G \in M_{2k+2}^{!}$ is weakly holomorphic and satisfies $a_{G}^{+}(0) = 0$, this agrees with the definition of the regularized cycle integral of $G$ given in \cite{brifrike}.
		\item The same idea has been used in \cite{bif} to define regularized cycle integrals of harmonic Maass forms of weight $0$. As in Theorem 3.2 of \cite{bif} the regularized cycle integrals can be rewritten in terms of line integrals of $G$ and $\xi_{2k+2}G$ against polygamma functions and Bernoulli polynomials, see Proposition \ref{prop regularized cycle integrals alternative}. 
		\item It is not yet clear that this naive approach gives the correct regularization. It will be justified by the fact that the regularized cycle integrals appear in the Fourier expansion of the Shintani lift. 
	\end{enumerate}
 \end{rem}
 
 We define $\tr_{\Delta}(G,D)$ for $|\Delta|D > 0$ being a square in an analogous way as before, but using the regularized cycle integrals of $G$. Similarly as in \cite{brifrike} and \cite{bif}, we may view this trace as a replacement for the central critical value of the (non-existent) $L$-series of a harmonic Maass form of weight $2k+2$. Correspondingly, we define the $\Delta$-th twisted central critical $L$-value of $G \in H_{2k+2}$ by
 \begin{align}\label{eq LValue}
 L_{\Delta}^{*}(G,k+1) = \sqrt{|\Delta|}\tr_{\Delta}(G,-\Delta),
 \end{align}
 which is inspired by the analogous formula for the central value $s = k+1$ of the completed twisted $L$-series 
 \[
 L_{\Delta}^{*}(F,s) = (2\pi)^{-s}\Gamma(s)|\Delta|^{s}\sum_{n > 0}\left(\frac{\Delta}{n}\right)a_{F}(n)n^{-s}
 \]
 of a cusp form $F \in S_{2k+2}$, see the proof of Corollary~1 in \cite{kohnen85}. We give an explicit example in Section~\ref{section application fD} below.
 
 \subsection{The Fourier expansion of the Shintani lift}
 
	We can now state the Fourier expansion of the Shintani lift, which is the main result of this work. To simplify the notation we only give a formula for its holomorphic part in the introduction.

 \begin{thm}
 	Let $k \in \Z$ with $k \geq 0$ and let $G \in H_{2k+2}$. The Fourier expansion of the holomorphic part of the Shintani theta lift $\ISh_{\Delta}(G,\tau)$ is given by
 	\begin{align*}
	|\Delta|^{(k+1)/2}\ISh_{\Delta}(G,\tau)^{+} =&(-1)^{k+1}|\Delta|^{-k/2}a_{G}^{+}(0)L_{\Delta}(-k) + (-1)^{k}\sum_{D > 0}\tr_{\Delta}(G,D)e^{2\pi i D\tau},
	\end{align*}
	where $L_{\Delta}(s) = \sum_{n=1}^{\infty}\left(\frac{\Delta}{n}\right)n^{-s}$ for $\Re(s) > 1$ is a Dirichlet $L$-series, and $\tr_{\Delta}(G,D)$ are twisted traces of regularized cycle integrals of $G$.
 \end{thm}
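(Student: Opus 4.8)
The plan is to evaluate the regularized theta integral by the Rankin--Selberg unfolding method, using the regularization as a substitute for convergence in the cases where naive unfolding fails. First I would insert the $\tau$-Fourier expansion of $\overline{\Theta_{Sh,\Delta}(\tau,z)}$ into the integral and interchange it with the operators $\CT_{s=0}$ and $\lim_{T\to\infty}$ (justified by the absolute convergence of the theta series together with estimates on the truncated integral), reducing the problem to evaluating, for each discriminant $D$ with $\sgn(\Delta)D \equiv 0,1 \ (4)$, the ``local'' integral
\[
\Phi_{D}(s,T) = \int_{\calF_{T}} G(z) \sum_{Q \in \calQ_{|\Delta|D}} \chi_{\Delta}(Q)\, \frac{Q(z,1)^{k+1}}{y^{2k+2}}\, e^{-4\pi v |Q(z,1)|^{2}/(|\Delta|y^{2})}\, y^{2k+2-s}\,\frac{dx\,dy}{y^{2}},
\]
which enters weighted by $2v^{1/2}|\Delta|^{-(k+1)/2}e^{2\pi i D\bar\tau}$. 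Splitting the $Q$-sum into $\overline{\Gamma}$-orbits and using that $\chi_{\Delta}$, the factor $Q(z,1)^{k+1}/y^{2k+2}$, and the Gaussian all transform compatibly under $\Gamma$, each orbit contributes an integral over a (possibly truncated) fundamental domain for $\Gamma_{Q}\backslash\H$.

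Then I would run the case analysis on $D$. For $D<0$ the stabilizer $\overline{\Gamma}_{Q}$ is finite and the Gaussian localizes the integral near the CM point $z_{Q}$; evaluating it in geodesic polar coordinates around $z_{Q}$ and using that $G$ is annihilated by $\Delta_{2k+2}$ (cf.\ \eqref{eq Laplace operator}) produces the traces of CM values of iterated derivatives of $G$ that make up the non-holomorphic part. For $D>0$ with $|\Delta|D$ \emph{not} a square, $\overline{\Gamma}_{Q}$ is infinite cyclic hyperbolic; passing to geodesic polar coordinates around the \emph{compact} geodesic $\Gamma_{Q}\backslash c_{Q}$ and performing the transverse Gaussian integral (where the $v^{1/2}$ is consumed and the half-integral weight $3/2+k$ in $\tau$ is produced) leaves exactly $\int_{\Gamma_{Q}\backslash c_{Q}} G(z)Q(z,1)^{k}\,dz$; since this integral converges absolutely, the regularization is inert and one recovers $\tr_{\Delta}(G,D)$. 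For $D>0$ with $|\Delta|D$ a square, $c_{Q}$ runs into cusps, the unfolded integral diverges, and the truncation at $y\le T$ together with the $y^{-s}$-factor must be kept: substituting the Fourier expansion \eqref{eq Fourier expansion harmonic Maass form} of $G$ and the explicit integrals over $[1,T]$ of the theta kernel against $e^{2\pi i nz}$ and $\E_{2k+2}(4\pi ny)e^{2\pi i nz}$ (yielding incomplete Gamma functions and the $\E_{\kappa}$'s), then taking $\CT_{s=0}\lim_{T\to\infty}$, reproduces precisely the regularized cycle integral of the Definition above, thereby justifying that definition. Finally, the $D=0$ term comes from the degenerate forms with $b^{2}=4ac$, which factor as squares of linear forms; the residual $x,y$-integral collapses, only the constant term $a_{G}^{+}(0)$ of $G$ survives the orthogonality, and $\CT_{s=0}$ of the resulting Epstein-type zeta value gives $(-1)^{k+1}|\Delta|^{-k/2}a_{G}^{+}(0)L_{\Delta}(-k)$.

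Assembling the contributions, collecting the exponentials $e^{2\pi i Du}$ and reorganizing the $v$-dependence (the factor $e^{2\pi Dv}$ from $\overline{e(D\tau)}$ combining with the transverse Gaussian integral into either a holomorphic term $e^{2\pi i D\tau}$ or an $\E_{3/2+k}$-term in the non-holomorphic part) yields the stated formula for $\ISh_{\Delta}(G,\tau)^{+}$, after invoking \eqref{eq LValue} to recognize the $D=-\Delta$ term; the companion formula for $\ISh_{\Delta}(G,\tau)^{-}$ follows in the same way. As consistency checks one applies $\xi_{3/2+k,\tau}$ and verifies, via Proposition~\ref{prop:xidiagramintroduction} and the known Fourier expansion of the Millson lift from \cite{alfesschwagen}, that both sides agree, and one specializes to $G\in M_{2k+2}^{!}$ to recover the classical Shintani formula.

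\textbf{The main obstacle} is the square-discriminant case together with the $D=0$ term. There unfolding is illegal, so one must work on $\calF_{T}$, decompose it into a compact part --- where the cusp-form computation applies verbatim --- and a cuspidal neighbourhood $\{y>T_{0}\}$, and on the latter evaluate term-by-term while rigorously controlling: (i) the interchange of $\CT_{s=0}$, $\lim_{T\to\infty}$, the sum over $n$ in the expansion of $G$, and the sum over $Q$-orbits; (ii) the contribution of the circular arc of $\partial\calF_{T}$, which must be shown to be negligible in the limit; and (iii) the exponentially small portion of the theta kernel (the forms with $a\ne 0$ and the ``wrong-sign'' part of the $a=0$ forms), which must not contribute to the regularized value. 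Identifying the surviving combination of incomplete Gamma functions with the Definition above, matching the degenerate contribution with $L_{\Delta}(-k)$, and checking independence of the auxiliary parameter $T$ --- which is forced since the original regularized integral carries no such parameter --- is the computational heart of the argument.
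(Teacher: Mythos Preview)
Your overall architecture---insert the $\tau$-Fourier expansion of the theta kernel, split by sign of $D$, unfold orbit by orbit, and handle the degenerate orbits by keeping the regularization---is sound and is indeed how the classical Shintani computation goes. But the paper does \emph{not} proceed by direct evaluation of the two-dimensional integrals in geodesic polar coordinates. The central device in the paper is the construction of an explicit $\Delta_{2k+2,z}$-preimage
\[
\eta(Q,\tau,z)=-\frac{1}{2Q(z,1)^{k+1}}\int_{|p_z(Q)|/\sqrt{|\Delta|}}^{\infty}(t^2+D)^k\,\erfc(\sqrt{\pi v}\,t)\,dt
\]
of the Shintani Schwartz function $\varphi_{Sh}^0$. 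One then writes $\varphi_{Sh}^0=\Delta_{2k+2}\eta$ and applies Stokes' theorem twice, reducing each unfolded two-dimensional integral to a sum of one-dimensional boundary integrals of $G\cdot\xi_{2k+2}\eta$ and $\xi_{2k+2}G\cdot\eta$. The point is that $\eta$ has a pole at the CM point $z_Q$ (so the residue theorem produces the CM traces for $D<0$), while $\xi_{2k+2}\eta$ has a jump across $c_Q$ (so the $\varepsilon$-tube around $c_Q$ produces the cycle integral for $D>0$). For square $|\Delta|D$ the remaining boundary pieces along $\partial\calF_T$ are computed by Poisson summation; their asymptotics involve Bernoulli polynomials and polygamma functions, and this is what forces the alternative description of the regularized cycle integral (Proposition~\ref{prop regularized cycle integrals alternative}) rather than your more direct incomplete-Gamma formulation. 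The paper remarks explicitly that without the $\eta$-preimage the computation ``would have been even harder, and maybe unfeasible.''

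Concretely, the weakest point of your outline is the square-discriminant step. You propose to keep the truncation, substitute the Fourier expansion of $G$, and integrate the theta kernel against each $e^{2\pi inz}$ over $[1,T]$; but after unfolding, the domain for a form $Q$ with trivial stabilizer is all of $\H$, not a vertical strip, and the kernel for $a\neq 0$ is a Gaussian centered on a semicircle, not on a vertical line. One does not obtain a single $y$-integral, and the passage from the two-dimensional regularized integral to the one-dimensional formula in the Definition is exactly the nontrivial content that the $\eta$-preimage and the boundary asymptotics (Propositions~\ref{prop:firstboundaryintegral} and~\ref{prop:secondboundaryintegral}) supply. A smaller point: the constant term $(-1)^{k+1}|\Delta|^{-k/2}a_G^+(0)L_\Delta(-k)$ involves the Dirichlet $L$-value $L_\Delta(-k)$, coming from a Hurwitz zeta computation on the isotropic orbits, and has nothing to do with~\eqref{eq LValue}; there is no special ``$D=-\Delta$ term'' to recognize.
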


 \begin{rem}
 \begin{enumerate}
 \item The non-holomorphic part is determined by the relation to the Millson lift from Proposition~\ref{prop:xidiagramintroduction} and the fact that the Fourier expansion of the Millson lift of the weakly holomorphic modular form $\xi_{2k+2}G$ is given by the generating series of traces $\tr_{\Delta}^{+}(R_{-2k}^{k}\xi_{2k+2},D)$, where $R_{-2k}^{k} = R_{-2}\circ R_{-4} \circ\dots\circ R_{-2k}$ (with $R_{0}^{0} = \id$) is an iterated version of the Maass raising operator $R_{\kappa} = 2i\frac{\partial}{\partial z} + \kappa y^{-1}$. We give the full Fourier expansion in Theorem~\ref{theorem fourier expansion shintani}.
 \item In \cite{briguka2,briguka} the Shintani lift was generalized to weakly holomorphic modular forms with vanishing constant terms, and it was shown to yield cusp forms whose Fourier expansions are given by regularized cycle integrals of the input. However, the authors did not use a theta lift approach, but they showed that the regularized cycle integrals of a weakly holomorphic modular form $F \in M_{2k+2}^{!}$ with $a_{F}(0) = 0$ essentially agree with the cycle integrals of the cusp form $\xi_{-2k}\mathcal{F} \in S_{2k+2}$, where $\mathcal{F} \in H_{-2k}^{+}$ is any harmonic Maass form with $D^{2k+2}\mathcal{F} = F$, with $D = \frac{1}{2\pi i}\frac{\partial}{\partial z}$. 
 This approach does not work for harmonic Maass forms.
 \end{enumerate}
 \end{rem}
 
 The crucial ingredient for the computation of the above Fourier expansion is a preimage under the Laplace operator $\Delta_{2k+2}$ of the Shintani Schwartz function
 \[
 \varphi_{Sh}^{0}(Q,\tau,z) = 2v^{1/2}\frac{Q(\bar{z},1)^{k+1}}{|\Delta|^{(k+1)/2}y^{2k+2}}e^{-4\pi v\left(\frac{|Q(z,1)|^{2}}{|\Delta|y^{2}}-D\right)}.
 \]
 Note that the Shintani theta function is given by 
 \[
 \Theta_{Sh,\Delta}(\tau,z) = \sum_{\substack{D \in \Z \\ \sgn(\Delta)D \equiv 0,1(4)}}\sum_{Q \in \calQ_{|\Delta|D}}\chi_{D}(Q)\varphi_{Sh}^{0}(Q,\tau,z)e^{-2\pi i D\bar{\tau}}.
 \]
 We define the quantity $p_{z}(Q) = -y^{-1}(a|z|^{2}+bx + c)$, which vanishes exactly on the geodesic $c_{Q}$. 
 \begin{prop}
 	For $Q \in \calQ_{|\Delta|D}$ and $z \in \H$ with $Q(z,1) \neq 0$ the function
	\begin{align*}
	\eta(Q,\tau,z) = -\frac{1}{2Q(z,1)^{k+1}}\int_{\frac{|p_{z}(Q)|}{\sqrt{|\Delta|}}}^{\infty}\left(t^{2}+D\right)^{k}\erfc\left(\sqrt{\pi v}t\right)dt,
	\end{align*}
	with $\erfc(x) = \frac{2}{\sqrt{\pi}}\int_{x}^{\infty}e^{w^{2}}dw$, satisfies
	\begin{align*}
	\xi_{2k+2}\eta(Q,\tau,z) = \frac{Q(z,1)^{k}}{2|\Delta|^{k+1/2}} \sgn(p_{z}(Q))\erfc\left(\sqrt{\pi v}\frac{|p_{z}(Q)|}{\sqrt{|\Delta|}}\right)
	\end{align*}
	and
	\begin{align*}
	\Delta_{2k+2}\eta(Q,\tau,z) = \varphi_{Sh}^{0}(Q,\tau,z).
	\end{align*}
 \end{prop}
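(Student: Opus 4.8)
The plan is to verify both identities by direct differentiation, deducing the second from the first via $\Delta_{2k+2} = -\xi_{-2k}\xi_{2k+2}$. Two elementary pointwise identities do all the work. Writing $Q(z,1) = (ax^2 - ay^2 + bx + c) + iy(2ax+b)$ and using $b^2 - 4ac = |\Delta|D$, one checks that
\[
|Q(z,1)|^2 = y^2\bigl(p_z(Q)^2 + |\Delta|D\bigr), \qquad\text{so}\qquad \frac{|Q(z,1)|^2}{|\Delta|\,y^2} - D = \frac{p_z(Q)^2}{|\Delta|};
\]
and a short computation from $p_z(Q) = -y^{-1}(a|z|^2 + bx + c)$ gives
\[
\frac{\partial}{\partial\bar z}\,p_z(Q) = \frac{i}{2y^2}\,Q(z,1), \qquad\text{hence}\qquad \overline{\frac{\partial}{\partial\bar z}\,p_z(Q)} = \frac{\partial}{\partial z}\,p_z(Q) = -\frac{i}{2y^2}\,Q(\bar z,1),
\]
using that $p_z(Q)$ is real and $\overline{Q(z,1)} = Q(\bar z,1)$.

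For the first identity write $\eta(Q,\tau,z) = -\tfrac12 Q(z,1)^{-(k+1)}I(z)$, where $I(z) = \int_{|p_z(Q)|/\sqrt{|\Delta|}}^{\infty}(t^2+D)^k\erfc(\sqrt{\pi v}\,t)\,dt$. Since the prefactor is holomorphic in $z$ (we assume $Q(z,1)\neq 0$) and $v$ is independent of $z$, the operator $\partial_{\bar z}$ acts only on $I$, and by the fundamental theorem of calculus only through the lower limit. Inserting the two identities above, the integrand at the lower limit equals $\tfrac{Q(z,1)^kQ(\bar z,1)^k}{|\Delta|^k y^{2k}}\erfc\bigl(\sqrt{\pi v}\,|p_z(Q)|/\sqrt{|\Delta|}\bigr)$ and the $\bar z$-derivative of the lower limit equals $\tfrac{\sgn(p_z(Q))}{\sqrt{|\Delta|}}\cdot\tfrac{i}{2y^2}Q(z,1)$; the powers of $Q(z,1)$ then cancel against $Q(z,1)^{-(k+1)}$, so $\partial_{\bar z}\eta$ is a constant multiple of $Q(\bar z,1)^k y^{-2k-2}\sgn(p_z(Q))\erfc(\cdots)$. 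Taking the complex conjugate and multiplying by $2iy^{2k+2}$ turns $Q(\bar z,1)^k$ into $Q(z,1)^k$, and bookkeeping the numerical constants yields precisely the asserted formula for $\xi_{2k+2}\eta$.

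For the second identity apply $-\xi_{-2k}$ to $g := \xi_{2k+2}\eta$. The factor $Q(z,1)^k$ in $g$ is holomorphic, so $\partial_{\bar z}$ acts only on $\sgn(p_z(Q))\erfc\bigl(\sqrt{\pi v}\,|p_z(Q)|/\sqrt{|\Delta|}\bigr)$, which is smooth off the geodesic $c_Q=\{p_z(Q)=0\}$. Since $\erfc'(s) = -\tfrac{2}{\sqrt\pi}e^{-s^2}$, the chain rule through $|p_z(Q)|$ makes the two occurrences of $\sgn(p_z(Q))$ cancel and produces the Gaussian $e^{-\pi v p_z(Q)^2/|\Delta|}$ times $\partial_{\bar z}p_z(Q) = \tfrac{i}{2y^2}Q(z,1)$. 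Conjugating, applying the weight factor $2iy^{-2k}$, and rewriting $\tfrac{p_z(Q)^2}{|\Delta|} = \tfrac{|Q(z,1)|^2}{|\Delta|y^2} - D$ in the exponent, one recovers $\varphi_{Sh}^0(Q,\tau,z)$.

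The argument is entirely elementary; the real work is (i) verifying the two pointwise identities and (ii) carrying the powers of $Q(z,1)$, $Q(\bar z,1)$, $y$, $|\Delta|$ and the numerical constants through the chain rule so that everything collapses as stated. The one point to keep in mind is that the $|p_z(Q)|$ in the lower limit of $\eta$ makes $\eta$ merely continuous — not $C^1$ — across $c_Q$ when $D>0$ (for $D\le 0$ one has $p_z(Q)\neq 0$ whenever $Q(z,1)\neq 0$, so the issue does not arise), so the two identities are to be read pointwise for $z$ off $c_Q$; the jump of $\sgn(p_z(Q))$ in $\xi_{2k+2}\eta$ across $c_Q$ is exactly what will produce the cycle-integral terms in the Fourier expansion of the Shintani lift.
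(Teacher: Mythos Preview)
Your proof is correct and follows exactly the approach the paper indicates: the paper merely says ``The proposition can be proved by a direct computation'' (and in the general lattice setting, Lemma~6.1 is proved by ``a straightforward calculation using the rules \eqref{eq:diffpQR}''), and those rules are precisely your two pointwise identities $\partial_{\bar z}p_z(Q)=\tfrac{i}{2y^2}Q(z,1)$ and $|Q(z,1)|^2=y^2\bigl(p_z(Q)^2+|\Delta|D\bigr)$. Your remark that the identities hold pointwise off $c_Q$ and that the jump of $\sgn(p_z(Q))$ across $c_Q$ is what later produces the cycle-integral contributions is also exactly how the paper uses the result.
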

 
 The proposition can be proved by a direct computation. The most important point for us is that $\eta(Q,\tau,z)$ has a singularity at the CM point $z_{Q}$ if $D < 0$, and $\xi_{2k+2}\eta(Q,\tau,z)$ has a jump discontinuity along the geodesic $c_{Q}$ if $D > 0$.
 
 To explain the main idea of the computation of the Fourier expansion of $\ISh_{\Delta}(G,\tau)$ we ignore the necessary regularization and all convergence issues for the moment. Then, roughly speaking, the $D$-th Fourier coefficient for $D \neq 0$ of the Shintani theta lift can be computed by the unfolding argument as
 \begin{align*}
 \sum_{Q \in \mathcal{Q}_{|\Delta|D}/\Gamma}\chi_{\Delta}(Q)\int_{\Gamma_{Q} \setminus \H}G(z)\overline{\varphi_{Sh}^{0}(Q,\tau,z)}y^{2k+2}\frac{dx \, dy}{y^{2}}.
 \end{align*}
 If we now plug in $\Delta_{2k+2}\eta(Q,\tau,z)$ for $\varphi_{Sh}^{0}(Q,\tau,z)$, we formally obtain, using Stokes' theorem and the fact that $G$ is harmonic, that
 \begin{align*}
 \int_{\Gamma_{Q} \setminus \H}G(z)\overline{\varphi_{Sh}^{0}(Q,\tau,z)}y^{2k+2}\frac{dx \, dy}{y^{2}} &= \int_{\partial (\Gamma_{Q}\backslash \H)}G(z)\xi_{2k+2}\eta(Q,\tau,z)dz \\
 &\quad -\overline{\int_{\partial (\Gamma_{Q}\backslash \H)}\xi_{2k+2}G(z)\eta(Q,\tau,z)dz}.
 \end{align*}
 Thus we can reduce the computation of the two-dimensional integral on the left-hand side to the computation of two one-dimensional boundary integrals. In fact, we have to be a bit more careful since $\eta(Q,\tau,z)$ and $\xi_{2k+2}\eta(Q,\tau,z)$ have singularities at CM points and geodesics, so before using Stokes' theorem we have to cut out small $\varepsilon$-neighbourhoods around these singularities, and afterwards let $\varepsilon$ go to $0$. This gives further boundary integrals, which yield the CM values of $R_{-2k}^{k}\xi_{2k+2}G$ and parts of the regularized cycle integrals of $G$. Due to the necessary regularization, the actual computation of the several boundary integrals is still very involved, and occupies a considerable part of this work. 
 
 The existence of the $\Delta_{2k+2}$-preimage $\eta(Q,\tau,z)$ of $\varphi_{Sh}^{0}(Q,\tau,z)$ is not at all clear, and was quite surprising to us. Without it, the computation of the Fourier expansion would have been even harder, and maybe unfeasible. Further, it opens a way to further generalizations of the Shintani theta lift. For example, it should be possible, using similar methods as presented here, to extend the Shintani theta lift to meromorphic modular forms or polar harmonic Maass forms. We hope to come back to this problem in the near future.

 \subsection{Application: $\xi_{3/2}$-preimages of weight $1/2$ weakly holomorphic modular forms}\label{section application fD}
 
 Let $k = 0$ and let $\Delta < 0$ be a fundamental discriminant. We let $J = j-744 \in M_{0}^{!}$ be the $j$-invariant without its constant term, and we let
 \[
 f_{|\Delta|}(\tau) = e^{2\pi i \Delta \tau} + \sum_{D < 0}\frac{1}{\sqrt{|D|}}\tr_{\Delta}^{+}(J,D)e^{-2\pi i D\tau}
 \]
 be the generating series of twisted singular moduli discovered by Zagier \cite{zagiertraces}. It is a weakly holomorphic modular form of weight $1/2$ for $\Gamma_{0}(4)$ satisfying the Kohnen plus space condition, and it can be constructed as the Millson theta lift $\IM_{\Delta}(J,\tau) = 2f_{|\Delta|}(\tau)$ of $J$, compare \cite{alfesschwagen}, Theorem 1.1. Let $\tilde{J} \in H_{2}$ be a $\xi_{2}$-preimage of $J$. Such a preimage exists by the surjectivity of the $\xi$-operator, but can also be explicitly constructed as a Maass-Poincar\'e series, see \cite{brhabil}, Section 1.3. The holomorphic part of its Shintani theta lift has the Fourier expansion
 \begin{align*}
 \sqrt{|\Delta|}\ISh_{\Delta}(\tilde{J},\tau)^{+}  &= -a_{\tilde{J}}^{+}(0)L_{\Delta}(0) + \sum_{D > 0}\tr_{\Delta}\big(\tilde{J},D\big)e^{2\pi i D\tau}.
 \end{align*}
 By Proposition \ref{prop:xidiagramintroduction} we have
 \[
 \xi_{3/2}\ISh_{\Delta}\big(\tilde{J},\tau\big) = -\frac{1}{4\sqrt{|\Delta|}}\IM_{\Delta}(J,\tau) = -\frac{1}{2\sqrt{|\Delta|}}f_{|\Delta|}(\tau),
 \]
 so the Shintani theta lift of $\tilde{J}$ yields a $\xi_{3/2}$-preimage of $f_{|\Delta|}$. This can be used to evaluate the regularized Petersson inner products $(f_{D},f_{|\Delta|})^{\reg}$ with $f_{D}$ for $-D < 0$ a discriminant. For the exact definition of this inner product, in particular for the case $D = |\Delta|$, we refer the reader to \cite{bringmanndiamantisehlen}. We will make use of the fact that the regularized inner product of two weakly holomorphic modular forms $f , g \in M_{\kappa}^{!}$ with $g = \xi_{2-\kappa}G$ for some $G \in H_{2-\kappa}$ can be evaluated as 
 \begin{align}\label{eq inner product evaluation}
 (f,g)^{\reg} = (f,\xi_{2-\kappa}G)^{\reg} = \sum_{D \in \Z}a_{f}(D)a_{G}^{+}(-D),
 \end{align}
 see \cite{bringmanndiamantisehlen}, Theorem 4.1. Since the Shintani theta lift does not have a holomorphic principal part, i.e. the coefficients of negative index of its holomorphic part vanish, we obtain the following result.
 
 \begin{prop}
 	For $\Delta < 0$ a fundamental discriminant and $-D < 0$ a discriminant we have the formula
	\[
	\big(f_{D},f_{|\Delta|}\big)^{\reg} = -2\tr_{\Delta}\big(\tilde{J},D\big).
	\]
 \end{prop}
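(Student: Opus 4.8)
The plan is to deduce the formula directly from the inner product evaluation \eqref{eq inner product evaluation} together with the Fourier expansion of $\ISh_{\Delta}(\tilde J,\tau)$ recorded above. First I would rewrite $f_{|\Delta|}$ as a $\xi$-image. Combining Proposition~\ref{prop:xidiagramintroduction} (for $k=0$) with $\xi_{2}\tilde J = J$ and $\IM_{\Delta}(J,\tau) = 2f_{|\Delta|}(\tau)$ gives $\xi_{3/2}\ISh_{\Delta}(\tilde J,\tau) = -\tfrac{1}{2\sqrt{|\Delta|}}f_{|\Delta|}$, and since $\xi_{3/2}$ is antilinear while $-2\sqrt{|\Delta|}$ is real this becomes
\[
f_{|\Delta|}(\tau) = \xi_{3/2}G, \qquad G := -2\sqrt{|\Delta|}\,\ISh_{\Delta}(\tilde J,\cdot) \in H_{3/2}
\]
(the membership in $H_{3/2}$ being part of the first main theorem, using that there are no weight $3/2$ unary theta series for $\Gamma_{0}(4)$). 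Since $f_{D}$ and $f_{|\Delta|}$ are weakly holomorphic forms of weight $\kappa = 1/2$ in the plus space, \eqref{eq inner product evaluation} applies with $2-\kappa = 3/2$ and yields
\[
\big(f_{D},f_{|\Delta|}\big)^{\reg} = \big(f_{D},\xi_{3/2}G\big)^{\reg} = \sum_{m \in \Z}a_{f_{D}}(m)\,a_{G}^{+}(-m).
\]

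Next I would read off the holomorphic Fourier coefficients of $G$ from the displayed expansion of $\sqrt{|\Delta|}\ISh_{\Delta}(\tilde J,\tau)^{+}$: the factor $-2\sqrt{|\Delta|}$ cancels the normalisation, so that $a_{G}^{+}(0) = 2a_{\tilde J}^{+}(0)L_{\Delta}(0)$, $a_{G}^{+}(m) = -2\tr_{\Delta}(\tilde J,m)$ for $m>0$, and, crucially, $a_{G}^{+}(m) = 0$ for $m<0$ because the Shintani lift has no holomorphic principal part. Hence only the terms with $m\le 0$ contribute to the sum. Now $f_{D}$, normalised as $f_{D} = q^{-D} + O(q)$ in Zagier's fashion (available precisely because $-D$ is a discriminant, so that $f_{D}$ lies in the plus space), has $a_{f_{D}}(-D) = 1$, vanishes at all other negative indices, and has vanishing constant term $a_{f_{D}}(0) = 0$; this last fact is exactly what makes the otherwise surviving term $a_{f_{D}}(0)\,a_{G}^{+}(0) = 2a_{f_{D}}(0)a_{\tilde J}^{+}(0)L_{\Delta}(0)$ disappear. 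Only $m=-D$ remains, so
\[
\big(f_{D},f_{|\Delta|}\big)^{\reg} = a_{f_{D}}(-D)\,a_{G}^{+}(D) = -2\tr_{\Delta}(\tilde J,D),
\]
which is the assertion.

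Since everything genuinely hard (the Fourier expansion of the Shintani lift, Proposition~\ref{prop:xidiagramintroduction}, and Theorem~4.1 of \cite{bringmanndiamantisehlen}) has already been established, the only points that require care here are the hypotheses of the cited results rather than any new computation: one has to check that $f_{D}$ and $f_{|\Delta|}$ satisfy the assumptions of \eqref{eq inner product evaluation}, and in particular, in the diagonal case $D = |\Delta|$, that the evaluation formula remains valid for the subtler regularised inner product $(f_{|\Delta|},f_{|\Delta|})^{\reg}$ defined in \cite{bringmanndiamantisehlen}. It is also worth recording that $\tr_{\Delta}(\tilde J,D)$ is well-defined in this range — since $\Delta<0$ we have $\sgn(\Delta)D = -D$, a discriminant by hypothesis, and for $|\Delta|D$ a square one uses the regularised cycle integrals of $\tilde J$ — and, for $D = |\Delta| = -\Delta$, that the proposition identifies the self-inner product with the twisted central $L$-value of \eqref{eq LValue} via $(f_{|\Delta|},f_{|\Delta|})^{\reg} = -2|\Delta|^{-1/2}L_{\Delta}^{*}(\tilde J,1)$.
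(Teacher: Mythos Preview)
Your argument is correct and follows exactly the approach sketched in the paper: write $f_{|\Delta|} = \xi_{3/2}G$ with $G = -2\sqrt{|\Delta|}\,\ISh_{\Delta}(\tilde J,\cdot)$, apply \eqref{eq inner product evaluation}, and use that the Shintani lift has no holomorphic principal part so only the single nonzero nonpositive coefficient $a_{f_D}(-D)=1$ contributes. Your write-up is in fact more careful than the paper's one-line justification, in that you explicitly dispose of the potential $m=0$ contribution via $a_{f_D}(0)=0$.
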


	\begin{rem}
		\begin{enumerate}
		\item Interpreting $\tr_{\Delta}\big(\tilde{J},-\Delta\big)$ as a regularized $L$-value as in \eqref{eq LValue}, we obtain the striking identity
		\[
		\big(f_{|\Delta|},f_{|\Delta|}\big)^{\reg} = -\frac{2}{\sqrt{|\Delta|}}L_{\Delta}^{*}\big(\tilde{J},1\big).
		\]
		\item The regularized $L$-value of $\tilde{J}$ is explicitly given by
		\begin{align*}
		\frac{\pi}{|\Delta|}L_{\Delta}^{*}\big(\tilde{J},1\big) &= -\sum_{n\neq 0}\left(\frac{\Delta}{n}\right)\frac{a_{\tilde{J}}^{+}(n)}{n}e^{-2\pi n/|\Delta|} \\
		&\quad -\sum_{n\neq 0}\left(\frac{\Delta}{n}\right)\frac{a_{\tilde{J}}^{-}(n)}{n}\left(\E_{2}\left(\frac{4\pi n}{|\Delta|}\right)e^{-2\pi n/|\Delta|}-\frac{1}{2}\E_{2}\left(\frac{2\pi n}{|\Delta|}\right)\right).
		\end{align*}
		Note that $a_{\tilde{J}}^{-}(n) = 4\pi n \, a_{J}(-n)$. By adding a suitable weakly holomorphic modular form, we can assume that $a_{\tilde{J}}^{+}(n) = 0$ for $n < 0$. Then, by \eqref{eq inner product evaluation} we have $a_{\tilde{J}}^{+}(n) = (J_{n},J)^{\reg}$ for $n \geq 0$, where $J_{n} \in M_{0}^{!}$ is the unique modular function whose Fourier expansion starts $e^{-2\pi i nz} + O(e^{2\pi i z})$. 

		\item An analogous evaluation of the regularized Petersson norm of the weight $3/2$ generating series of singular moduli $g_{1}$ is given in \cite{bringmanndiamantisehlen}, Theorem 1.2.
		\item If $\Delta$ is not a fundamental discriminant then $f_{|\Delta|}$ can be written as a linear combination of twisted Millson theta lifts. This can be used to work out a formula for the above inner product for non-fundamental discriminants $\Delta < 0$ as well.
		\end{enumerate}
	\end{rem}

	We would like to point out that the above application was in fact one of main motivations and the starting point for the present work. Recently, Bruinier, Funke and Imamoglu \cite{bif} constructed a similar regularized theta lift (producing weight $1/2$ harmonic Maass forms) by integrating a weight $0$ harmonic Maass form against the Siegel theta function. Thereby they constructed $\xi_{1/2}$-preimages of the weight $3/2$ generating series of singular moduli $g_{\Delta}$ for fundamental discriminants $\Delta > 0$. The regularized Shintani theta lift provides $\xi_{3/2}$-preimages of the weight $1/2$ modular forms $f_{|\Delta|}$ for $\Delta < 0$.
 
 \subsection{Application: Shintani theta lifts of Eisenstein series}

 	Let $k = 0$ and let $\Delta < 0$ be a fundamental discriminant. 
	By the Dirichlet class number formula we have $L_{\Delta}(0) = \sqrt{|\Delta|}L_{\Delta}(1)/\pi = H(|\Delta|)$.

	Let
	\begin{align*}
	E_{2}^{*}(z) = 1-24\sum_{n \geq 1}\sigma_{1}(n)e^{2\pi i n z} - \frac{3}{\pi y}
	\end{align*}
	be the non-holomorphic Eisenstein series of weight $2$ for $\Gamma$. Note that $\xi_{2}E_{2}^{*} = \frac{3}{\pi}$. The Shintani theta lift of $E_{2}^{*}$ is a harmonic Maass form of weight $3/2$ having the Fourier expansion
	\begin{align*}
	\sqrt{|\Delta|}\ISh_{\Delta}(E_{2}^{*},\tau) &= -H(|\Delta|) + \sum_{D > 0}\tr_{\Delta}(E_{2}^{*},D)e^{2\pi i D \tau} \\ 
	&\quad +\frac{3}{2\pi}H(|\Delta|)v^{-1/2} + \sum_{D < 0}\frac{\tr_{\Delta}^{+}\left(\tfrac{3}{\pi},D\right)}{2\sqrt{|D|}}v^{-1/2}\beta_{3/2}(4\pi |D|v)e^{2\pi i D \tau},
	\end{align*}
	where $\beta_{3/2}(s) = \int_{1}^{\infty}e^{-st}t^{-3/2}dt$.
	We would like to compare this to Zagier's non-holomorphic Eisenstein series of weight $3/2$ defined by
	\begin{align*}
	E_{3/2}^{*}(\tau) = \sum_{D \geq 0}H(D)e^{2\pi i D\tau}+\frac{1}{16\pi}\sum_{n \in \Z}v^{-1/2}\beta_{3/2}(4\pi n^{2}v)e^{-2\pi i n^{2}\tau},
	\end{align*}
	where $H(0) = -\frac{1}{12}$ and $H(D) = 0$ if $-D \neq 0$ is not a discriminant, see \cite{zagiereisenstein}. Note that $\beta_{3/2}(0) = 2$. We see that the difference $\sqrt{|\Delta|}\ISh_{\Delta}(E_{2}^{*},\tau)-12H(|\Delta|)E_{3/2}^{*}(\tau)$ is a harmonic Maass form of weight $3/2$ whose constant coefficients of the holomorphic and non-holomorphic parts vanish. In particular, it maps to a cusp form of weight $1/2$ under $\xi_{3/2}$. Since $S_{1/2} = \{0\}$, we find that the difference is a holomorphic cusp of weight $3/2$, but $S_{3/2} = \{0\}$ as well, so we obtain that the difference vanishes. 
	
	\begin{prop} The Shintani theta lift of $E_{2}^{*}$ is given by
	\[
	\sqrt{|\Delta|}\ISh_{\Delta}(E_{2}^{*},\tau) = 12H(|\Delta|)E_{3/2}^{*}(\tau).
	\]
	\end{prop}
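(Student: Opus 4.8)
The plan is to compute the Fourier expansion of $\ISh_{\Delta}(E_{2}^{*},\tau)$ explicitly, compare it term by term in the \emph{constant} coefficient only with Zagier's series $E_{3/2}^{*}$, and then reduce the full identity to the vanishing of two zero-dimensional spaces of modular forms. First I would apply the full version of the Fourier expansion theorem (Theorem~\ref{theorem fourier expansion shintani}) to $G=E_{2}^{*}\in H_{2}$. Here one uses that $a_{E_{2}^{*}}^{+}(0)=1$ and that $\xi_{2}E_{2}^{*}=\tfrac{3}{\pi}$, so that by Proposition~\ref{prop:xidiagramintroduction} the non-holomorphic part of the lift is governed by the Millson lift of the constant function $\tfrac{3}{\pi}$; its coefficients are the twisted traces of CM values $\tr_{\Delta}^{+}(\tfrac{3}{\pi},D)=\tfrac{3}{\pi}\tr_{\Delta}^{+}(1,D)$, dressed with the standard $\beta_{3/2}$-profiles. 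To put the constant term of the holomorphic part into the asserted shape one invokes the Dirichlet class number formula $L_{\Delta}(0)=\sqrt{|\Delta|}\,L_{\Delta}(1)/\pi=H(|\Delta|)$. This yields the displayed expansion for $\sqrt{|\Delta|}\,\ISh_{\Delta}(E_{2}^{*},\tau)$.

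Next I would set $F(\tau):=\sqrt{|\Delta|}\,\ISh_{\Delta}(E_{2}^{*},\tau)-12H(|\Delta|)E_{3/2}^{*}(\tau)$. The essential point is that one does \emph{not} need to match the higher Fourier coefficients: it suffices to observe that the two constant contributions agree. The holomorphic constant term is $-H(|\Delta|)$ on both sides, using $H(0)=-\tfrac{1}{12}$; and the coefficient of $v^{-1/2}$ is $\tfrac{3}{2\pi}H(|\Delta|)$ on both sides, using $\beta_{3/2}(0)=2$ and $12\cdot\tfrac{1}{16\pi}\cdot2=\tfrac{3}{2\pi}$. Hence $F$ is a harmonic Maass form of weight $3/2$ for $\Gamma_{0}(4)$ lying in the Kohnen plus space (both summands do), whose holomorphic part carries no principal part (by Theorem~\ref{theorem fourier expansion shintani} the Shintani lift has none, and neither does $E_{3/2}^{*}$), and whose holomorphic and non-holomorphic parts both have vanishing constant coefficient.

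Finally I would run the standard small-space endgame. The non-holomorphic parts of both $\ISh_{\Delta}(E_{2}^{*},\tau)$ and $E_{3/2}^{*}$ are supported on non-positive Fourier indices, so $\xi_{3/2}F$ has no principal part; and since the non-holomorphic constant term of $F$ vanishes, the constant term of $\xi_{3/2}F$ (which equals $-\tfrac{1}{2}$ times the conjugate of that constant) vanishes as well. Therefore $\xi_{3/2}F\in S_{1/2}$ for $\Gamma_{0}(4)$, and this space is trivial by Serre--Stark (no unary theta series is cuspidal). Thus $F$ is weakly holomorphic; having no principal part and no constant term it lies in $S_{3/2}$ for $\Gamma_{0}(4)$ in the plus space, which is likewise trivial. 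Hence $F\equiv0$, which is the claim.

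I expect the main obstacle to be the first step, namely the bookkeeping needed to specialise the general Fourier expansion formula to $E_{2}^{*}$: one must identify the Millson lift of the constant $\tfrac{3}{\pi}$ with the $\beta_{3/2}$-series indexed by $\tr_{\Delta}^{+}(1,D)$ via Proposition~\ref{prop:xidiagramintroduction}, and track every normalising power of $|\Delta|$ so that the two constant terms come out \emph{exactly} as $-H(|\Delta|)$ and $\tfrac{3}{2\pi}H(|\Delta|)v^{-1/2}$. Once this is in place the comparison is immediate, and as a by-product the proposition then proves the coefficient identities $\tr_{\Delta}(E_{2}^{*},D)=12H(|\Delta|)H(D)$ for $D>0$ together with the matching of the non-holomorphic coefficients, rather than needing them as input.
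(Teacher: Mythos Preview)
Your proposal is correct and follows essentially the same route as the paper: specialise the Fourier expansion of the Shintani lift to $E_{2}^{*}$ via the class number formula and the relation $\xi_{2}E_{2}^{*}=\tfrac{3}{\pi}$, then match only the holomorphic and non-holomorphic constant terms with $12H(|\Delta|)E_{3/2}^{*}$ and kill the difference using $S_{1/2}=\{0\}$ and $S_{3/2}=\{0\}$. Your write-up is in fact more explicit than the paper's about why $\xi_{3/2}F$ lands in $S_{1/2}$ (no principal part from the support of the non-holomorphic coefficients, no constant term from the matched $v^{-1/2}$ coefficient), but the argument is the same.
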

	
	By comparing the holomorphic parts of both sides, we obtain an interesting identity of cycle integrals of $E_{2}^{*}$ and Hurwitz class numbers.
	
	\begin{cor}
		For a fundamental discriminant $\Delta < 0$ and a discriminant $-D < 0$ we have
		\[
		\sum_{Q \in \calQ_{|\Delta| D}/\Gamma}\chi_{\Delta}(Q)\int_{\Gamma_{Q}\backslash c_{Q}}^{\reg}E_{2}^{*}(z)dz = 12H(|\Delta|)H(D).
		\]
	\end{cor}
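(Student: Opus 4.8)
The plan is to read off the corollary from the preceding Proposition by matching, for each $D>0$, the coefficient of $e^{2\pi i D\tau}$ in the holomorphic parts of the two sides of
\[
\sqrt{|\Delta|}\,\ISh_{\Delta}(E_2^*,\tau)=12H(|\Delta|)E_{3/2}^*(\tau).
\]
First I would specialize the Fourier expansion of the Shintani lift (Theorem~\ref{theorem fourier expansion shintani}) to $k=0$ and $G=E_2^*$. Since $a_{E_2^*}^+(0)=1$ and, by the Dirichlet class number formula, $L_{\Delta}(0)=H(|\Delta|)$, the factor $(-1)^{k+1}|\Delta|^{-k/2}a_G^+(0)L_{\Delta}(-k)$ becomes $-H(|\Delta|)$ and the prefactor $|\Delta|^{(k+1)/2}$ becomes $\sqrt{|\Delta|}$, so the holomorphic part equals $\sqrt{|\Delta|}\,\ISh_{\Delta}(E_2^*,\tau)^+=-H(|\Delta|)+\sum_{D>0}\tr_{\Delta}(E_2^*,D)e^{2\pi i D\tau}$; hence the coefficient of $e^{2\pi i D\tau}$ on the left is $\tr_{\Delta}(E_2^*,D)$. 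On the right, the holomorphic part of $12H(|\Delta|)E_{3/2}^*(\tau)$ is $12H(|\Delta|)\sum_{D\geq 0}H(D)e^{2\pi i D\tau}$, so the matching coefficient is $12H(|\Delta|)H(D)$. This yields $\tr_{\Delta}(E_2^*,D)=12H(|\Delta|)H(D)$.

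It then remains to unwind the definition of $\tr_{\Delta}$. Because $k=0$ the weight factor $Q(z,1)^k$ is identically $1$, so
\[
\tr_{\Delta}(E_2^*,D)=\sum_{Q\in\calQ_{|\Delta|D}/\Gamma}\chi_{\Delta}(Q)\int_{\Gamma_Q\backslash c_Q}^{\reg}E_2^*(z)\,dz,
\]
where for $|\Delta|D$ not a perfect square $\Gamma_Q\backslash c_Q$ is a compact geodesic, $E_2^*$ is bounded along it, the integral converges absolutely, and the superscript $\reg$ is vacuous, while for $|\Delta|D$ a perfect square the regularized cycle integral is the one introduced in the Definition (extended to arbitrary $Q$ as in Section~\ref{section regularized cycle integrals}). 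Substituting this expression into the identity $\tr_{\Delta}(E_2^*,D)=12H(|\Delta|)H(D)$ gives the claimed formula. I would also note that both sides vanish unless $-D\equiv 0,1\pmod 4$: on the right $H(D)=0$ otherwise, and on the left $\calQ_{|\Delta|D}$ is empty (or $\chi_{\Delta}$ vanishes identically) unless $\sgn(\Delta)D=-D$ is a discriminant, which is exactly the hypothesis of the corollary, so the identity is stated precisely over its non-trivial range.

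The genuine content is upstream and has already been granted: it lies in the Proposition, hence in the full Fourier expansion theorem together with the identification of the non-holomorphic parts of $\ISh_{\Delta}(E_2^*,\tau)$ and of $E_{3/2}^*$, which forces their difference into $S_{3/2}=\{0\}$. Within the present deduction there is no real obstacle; the only points needing care are the bookkeeping of the normalization factor $|\Delta|^{(k+1)/2}$ and the sign $(-1)^k$ at $k=0$, confirming that $a_{E_2^*}^+(0)=1$ contributes the constant term $-H(|\Delta|)$ and nothing else, and checking that the regularized cycle integrals occurring in $\tr_{\Delta}(E_2^*,D)$ for square $|\Delta|D$ are exactly the objects written in the corollary — and, as the footnote to the Definition records, that summing over the classes $Q$ does not make the left-hand side collapse to $0$, so that the resulting identity is not vacuous.
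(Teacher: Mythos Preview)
Your proof is correct and follows exactly the paper's approach: the corollary is obtained by comparing the coefficients of $e^{2\pi i D\tau}$ in the holomorphic parts of the two sides of the Proposition $\sqrt{|\Delta|}\,\ISh_{\Delta}(E_2^*,\tau)=12H(|\Delta|)E_{3/2}^*(\tau)$, using the already-displayed expansion $\sqrt{|\Delta|}\,\ISh_{\Delta}(E_2^*,\tau)^+=-H(|\Delta|)+\sum_{D>0}\tr_{\Delta}(E_2^*,D)e^{2\pi i D\tau}$ on the left and $12H(|\Delta|)\sum_{D\geq 0}H(D)e^{2\pi i D\tau}$ on the right.
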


	\begin{rem}
		\begin{enumerate}
			\item For $\Delta < 0$ and $-D < 0$ being coprime fundamental discriminants, the above formula goes back to Hecke. We refer to \cite{ditkronecker} for a nice overview over this and related formulas.
			\item The case $-D = \Delta$ is particularly interesting. The twisted trace of regularized cycle integrals of $E_{2}^{*}$ can explicitly be evaluated by choosing as representatives for $\calQ_{|\Delta|^{2}}/\Gamma$ the forms $Q(x,y) = |\Delta|xy + cy^{2}$ with $0 \leq c < |\Delta|$. For a fundamental discriminant $\Delta < 0$ we obtain the equality
		\[
		\frac{1}{12\sqrt{|\Delta|}}L_{\Delta}^{*}(E_{2}^{*},1) = \frac{2\sqrt{|\Delta|}}{\pi}\sum_{n > 0}\left( \frac{\Delta}{n}\right)\frac{\sigma_{1}(n)}{n}e^{-2\pi n/|\Delta|} = H(|\Delta|)^{2}.
		\]
		The second equality can also be derived from Dirichlet's class number formula.
			\item By comparing the non-holomophic parts of $E_{3/2}^{*}$ and the Shintani theta lift of $E_{2}^{*}$, we obtain for $-D > 0$ being a discriminant the formula
		\begin{align*}
		\frac{\tr_{\Delta}^{+}(1,D)}{\sqrt{|D|}} = \begin{cases}
		H(|\Delta|), & \text{if $|D|$ is a square}, \\
		0, & \text{otherwise,}
		\end{cases}
		\end{align*}
		which can also be verified directly.
		\end{enumerate}
	\end{rem}

	Similarly, we find for $k > 0$ that the Shintani theta lift of the normalized Eisenstein series $E_{2k+2}$ for $\Gamma$ is a multiple of the Cohen Eisenstein series of weight $3/2+k$ \cite{cohen}. This is well known, and also follows from the evaluation of cycle integrals of $E_{2k+2}$ along $c_{Q}$ in terms of the $L$-series $\zeta_{Q}(s)$ associated to $Q$ due to Kohnen and Zagier \cite{kohnenzagierrationalperiods} (see also \cite{funkemillson}, Section 9, and \cite{briguka}, Theorem 1.3 and Lemma 4.1). However, the lift of the non-holomorphic Eisenstein series $E_{2}^{*}$ seems to be new.

\subsection{Application: $\xi_{3/2}$-preimages of Ramanujan's mock theta functions}

	We would like to explain how the Shintani theta lift can be used to construct $\xi_{3/2}$-preimages of Ramanujan's mock theta functions. The idea is similar as in Section~\ref{section application fD}, so we will only give a sketch. Let
		\begin{align*}
		f(q) = 1 + \sum_{n=1}^{\infty}\frac{q^{n^{2}}}{\prod_{j=1}^{n}(1+q^{j})^{2}},\qquad w(q) = 1 + \sum_{n=1}^{\infty}\frac{q^{2n^{2}+2n}}{\prod_{j=1}^{n+1}(1-q^{2j-1})^{2}},
		\end{align*}
		be two of Ramanujan's mock theta functions of order $3$. Zwegers \cite{zwegerspaper} showed that they can be understood as components of the holomorphic part of a vector valued harmonic Maass form $H(\tau)$ of weight $1/2$ (see also \cite{brono}, Section~8, for an overview). In \cite{bruinierschwagenscheidt}, it was noticed that the Millson theta lift of the $\Gamma_{0}(6)$-invariant function
		\[
		F(z) = -\frac{1}{40}\cdot\frac{E_{4}(z)+4E_{4}(2z)-9E_{4}(3z)-36E_{4}(6z)}{(\eta(z)\eta(2z)\eta(3z)\eta(6z))^{2}}
		\]
		is a multiple of $H(\tau)$, and this relation was used to find formulas for $f(q)$ and $\omega(q)$ in terms of traces of CM values of $F$. If we let $\tilde{F}$ be a harmonic Maass form of weight $2$ for $\Gamma_{0}(6)$ which maps to $F$ under $\xi_{2}$, then the differential equation from Proposition~\ref{proposition xi diagram} tells us that the Shintani theta lift $\ISh_{1}(\tilde{F},\tau)$ yields a sesquiharmonic Maass form which is a $\xi_{3/2}$-preimage of a multiple of $H(\tau)$. Its Fourier coefficients of the holomorphic part are given by traces of regularized cycle integrals of $\tilde{F}$.
		
		In her master's thesis, Kupka \cite{kupka} constructed (completions of vector valued versions of) Ramanujan's mock theta functions of order $5$ and $7$ as Millson lifts of weakly holomorphic modular functions, so the Shintani theta lift can immediately be used to construct $\xi_{3/2}$-preimages of these mock theta functions as well.

\subsection{Outline of the work}

The paper is organized as follows.

We explain the basic setup of the work in Section~2. We realize the complex upper half-plane as the Grassmannian of positive definite lines in a rational quadratic space $V$ of signature $(1,2)$, and we define CM points and geodesics in this model. Further, we recall the definition of vector valued harmonic Maass forms for the Weil representation associated to an even lattice.

In Section~3 we define regularized cycle integrals of harmonic Maass forms of positive even weight, and we give an alternative formula as a line integral involving polygamma functions and Bernoulli polynomials.

In Section~4 we recall the definition and the basic properties of the Millson and the Shintani theta functions, i.e., their transformation behaviour, their growth at the cusps, and their relations by differential operators.

The corresponding regularized Millson and Shintani theta lifts are investigated in Section~5. We show that the theta lifts yield real analytic (often harmonic) automorphic forms which are related by the $\xi$-operator.

Section 6 is the technical heart of the work. We state and compute the Fourier expansion of the Shintani theta lift. To this end, we introduce the function $\eta(Q,\tau,z)$ and use it to reduce the computation of the Fourier coefficients of the Shintani theta lift to the computation of several boundary integrals, which we then explicitly evaluate.

\section{Preliminaries}

In order to treat the Shintani theta lift of harmonic Maass forms for arbitrary congruence subgroups we use an orthogonal model of the upper-half plane and we work with vector valued harmonic Maass forms for the Weil representation of an even lattice of signature $(1,2)$. Here we briefly explain the necessary background, following the expositions of \cite{brfu06,bif}.

\subsection{The Grassmannian model of the upper half-plane}

For a positive integer $N$ we consider the rational quadratic space $V$ of signature $(1,2)$ given by the set of all rational traceless $2$ by $2$ matrices

with the quadratic form $Q(X)=N\text{det}(X)$. The associated bilinear form is $(X,Y)=-N\text{tr}(XY)$ for $X,Y \in V$. The group $\SL_2(\Q)$ acts as isometries on $V$ by $\gamma X :=\gamma X \gamma^{-1}$. 

For $z = x+iy \in \mathbb{H}$ we let $g_{z} = \left(\begin{smallmatrix}\sqrt{y} & x/\sqrt{y} \\ 0 & 1/\sqrt{y} \end{smallmatrix} \right) \in \SL_{2}(\R)$, such that $g_{z}i = z$. The vectors
\begin{align*}
	X_{1}(z) &= \frac{1}{\sqrt{2N}y}\begin{pmatrix}-x & x^{2} + y^{2} \\ -1 & x \end{pmatrix} = g_{z}\left(\frac{1}{\sqrt{2N}}\begin{pmatrix}0 & 1 \\ -1 & 0 \end{pmatrix}\right),\\
	X_{2}(z) &= \frac{1}{\sqrt{2N}y}\begin{pmatrix}x & -x^{2} + y^{2} \\ 1 & -x \end{pmatrix} = g_{z}\left(\frac{1}{\sqrt{2N}}\begin{pmatrix}0 & 1 \\ 1 & 0 \end{pmatrix}\right), \\
	X_{3}(z) &= \frac{1}{\sqrt{2N}y}\begin{pmatrix}y & -2xy \\ 0 & -y \end{pmatrix} = g_{z}\left(\frac{1}{\sqrt{2N}}\begin{pmatrix}1 & 0 \\ 0 & -1 \end{pmatrix}\right),
\end{align*}
form an orthogonal basis of $V(\R) = V \otimes \R$ with 
\[(
X_{1}(z),X_{1}(z)) = 1\quad\text{and} \quad(X_{2}(z),X_{2}(z)) = (X_{3}(z),X_{3}(z)) = -1.
\] 
We endow $V$ with the orientation given by this basis.
We let $D$ be the Grassmannian of lines in $V(\R)$ on which the quadratic form $Q$ is positive definite
and we identify $D$ with the complex upper half-plane $\H$ by associating to $z \in \H$ the positive line generated by $X_{1}(z)$.

The group $\SL_{2}(\R)$ acts on $\H$ by fractional linear transformations and
the identification above is $\SL_{2}(\R)$-equivariant, that is, $\gamma X_{1}(z)=X_{1}(\gamma z)$ for $\gamma \in \SL_{2}(\R)$ and $z \in \H$.

Throughout we let $L\subset V$ be an even lattice and $\Gamma$ a congruence subgroup of $\SL_{2}(\Z)$ that takes $L$ to itself and acts trivially on the discriminant group $L'/L$. For $h \in L'/L$ and $m \in \Q$ we let
\[
L_{m,h} = \{X \in L+h: Q(X) = m\}.
\]
The group $\Gamma$ acts on $L_{m,h}$, with finitely many orbits if $m \neq 0$.

\subsection{Cusps} 

We identify the set of isotropic lines $\mathrm{Iso}(V)$ in $V$
with $P^1(\Q)=\Q \cup \left\{ \infty\right\}$ via
\[
\psi: P^1(\Q) \rightarrow \mathrm{Iso}(V), \quad \psi((\alpha:\beta))
 = \mathrm{span}\left(\begin{pmatrix} \alpha\beta &\alpha^2 \\  -\beta^2 & -\alpha\beta \end{pmatrix}\right).
\]
The map $\psi$ is a bijection and $\psi(\gamma(\alpha:\beta))=\gamma\psi((\alpha:\beta))$ for $\gamma \in \SL_{2}(\Q)$. In particular, the cusps of $\Gamma$ can be identified with the $\Gamma$-classes of $\Iso(V)$. The line $\ell_\infty = \psi(\infty)$ is spanned by 
$X_\infty=\left(\begin{smallmatrix}0 & 1 \\ 0 & 0\end{smallmatrix}\right)$. 
For $\ell \in \mathrm{Iso}(V)$ we pick $\sigma_{\ell} \in\SL_2(\Z)$ 
such that $\sigma_{\ell}\ell_\infty=\ell$. An element of $\ell$ will be called positively oriented if it is a positive multiple of $\sigma_{\ell}X_{\infty}$. We let $\Gamma_{\ell}$ be the stabilizer of $\ell$ in $\Gamma$. Then $\sigma_{\ell}^{-1}\overline{\Gamma}_{\ell}\sigma_{\ell}$ is generated by $\left(\begin{smallmatrix}1 & \alpha_{\ell} \\ 0 & 1 \end{smallmatrix}\right)$ for some $\alpha_\ell \in \Q_{>0}$ which we call the width of the cusp $\ell$. For each $\ell$, there is a $\beta_{\ell} \in \Q_{>0}$ such that $\left(\begin{smallmatrix}0 & \beta_{\ell}  \\ 0 & 0\end{smallmatrix}\right)$ is a primitive element of $\ell_{\infty}\cap \sigma_{\ell}^{-1}L$. We write $\varepsilon_{\ell} = \alpha_{\ell}/\beta_{\ell}$. The quantities $\alpha_{\ell},\beta_{\ell}$ and $\varepsilon_{\ell}$ only depend on the $\Gamma$-class of $\ell$.

We let $\calF_{T} = \{z \in \calF: y \leq T\}$ be a truncated version of the standard fundamental domain $\calF = \{z \in \H: |x| \leq 1/2, |z| \geq 1\}$ for $\SL_{2}(\Z) \backslash \H$, and we define a truncated fundamental domain for $\Gamma \backslash \H$ by
\[
\calF(\Gamma)_{T} = \bigcup_{\ell \in \Gamma \setminus \Iso(V)}\sigma_{\ell}\calF^{\alpha_{\ell}}_{T}, \qquad \calF_{T}^{\alpha_{\ell}} = \bigcup_{j=0}^{\alpha_{\ell}-1}\begin{pmatrix} 1 & j \\ 0 & 1 \end{pmatrix}\calF_{T}.
\]
By letting $T$ go to $\infty$, we obtain a fundamental domain $\calF(\Gamma)$ for $\Gamma \backslash \H$.

\subsection{CM points and geodesics}

For $z =x+iy\in \H$ we define polynomials in $X = \left( \begin{smallmatrix}x_{2} & x_{1} \\ x_{3} & -x_{2}\end{smallmatrix}\right) \in V(\R)$ by
\begin{align*}
p_{z}(X) &= \sqrt{2}(X,X_{1}(z)) = -\frac{\sqrt{N}}{y}(x_{3}|z|^{2}-2x_{2}x -x_{1}), \\
Q_{X}(z) &= \sqrt{2N}y(X,X_{2}(z)+iX_{3}(z)) = N(x_{3}z^{2}-2x_{2}z-x_{1}), \\
R(X,z)&= \frac{1}{2}p^{2}_{z}(X)-(X,X) = \frac{1}{2Ny^{2}}|Q_{X}(z)|^{2}.
\end{align*}
For $\gamma \in \SL_{2}(\R)$ we have the transformation rules
\begin{align}\label{zTransformationRules}
p_{\gamma z}(X) = p_{z}(\gamma^{-1}X), \quad Q_{X}(\gamma z) = j(\gamma,z)^{-2}Q_{\gamma^{-1}X}(z), \quad R(X,\gamma z) = R(\gamma^{-1}X,z),
\end{align}
which can be verified by a direct calculation.

For $X \in V$ with $Q(X) > 0$ the CM point $z_{X} \in \H$ associated to $X$ is defined as the point corresponding to the positive line in the Grassmannian $D$ spanned by $X$. Equivalently, $z_{X}$ is the unique root of $Q_{X}(z)$ in $\H$. We use the same symbol $z_{X}$ for its orbit in the modular curve $\Gamma \backslash \H$. Note that the stabilizer $\overline{\Gamma}_{X}$ of $X$ in $\overline{\Gamma}$ is finite. 

For $X \in V$ with $Q(X) < 0$ the geodesic $c_{X} \subseteq \H$ associated to $X$ is defined as the set of all points corresponding to the positive lines in $D$ which are orthogonal to $X$. Equivalently, $c_{X}$ is the set of all $z \in \H$ with $p_{z}(X) = 0$. We orient $c_{X}$ as follows. There is some $g \in \SL_{2}(\R)$ such that $g^{-1}X = \sqrt{|Q(X)|/N}\left(\begin{smallmatrix}-1 & 0 \\ 0 & 1 \end{smallmatrix}\right)$, so $g^{-1}c_{X} = i\R_{>0}$ is the positive imaginary axis. If we move along $i\R_{>0}$ from $0$ to $i\infty$ this puts an orientation on $c_{X} = g (i\R_{>0})$, which is in fact independent of the choice of $g$. 
We write $c(X) = \Gamma_{X} \backslash c_{X}$ for the image of $c_{X}$ in $\Gamma \backslash \H$. 

Let $X \in V$ with $Q(X) < 0$. If $|Q(X)|/N \in \Q$ is not a square then the stabilizer $\overline{\Gamma}_{X}$ is infinite cyclic, and $c(X)$ is a closed geodesic in $\Gamma \backslash \H$. If $|Q(X)|/N$ is a rational square, then $\overline{\Gamma}_{X}$ is trivial, and $c(X)$ is an infinite geodesic. In the latter case, $X$ is orthogonal to two isotropic lines $\ell_{X},\tilde{\ell}_{X} \in \Iso(V)$, which we can tell apart by requiring that if we write $\ell_{X} = \spann(Y),\tilde{\ell}_{X} = \spann(\tilde{Y})$ with $Y$ and $\tilde{Y}$ positively oriented, then $(X,Y,\tilde{Y})$ is a positively oriented basis for $V$. Note that $\tilde{\ell}_{X} = \ell_{-X}$. We have
\[
\sigma_{\ell_{X}}^{-1}X = \sqrt{\frac{|Q(X)|}{N}}\begin{pmatrix}-1 & 2r_{\ell_{X}} \\ 0 & 1 \end{pmatrix}
\]
for some $r_{\ell_{X}} \in \Q$. The geodesic $c_{X}$ in $\H$ is then explicitly given by
\[
c_{X} = \sigma_{\ell_{X}}\{z \in \H: \Re(z) = r_{\ell_{X}}\}.
\] 
Note that two different choices of $\sigma_{\ell_{X}} \in \SL_{2}(\Z)$ with $\sigma_{\ell_{X}}\infty = \ell_{X}$ give two numbers $r_{\ell_{X}}$ which differ by a multiple of the width $\alpha_{\ell_{X}}$. Therefore we call the residue class of $r_{\ell_{X}}$ mod $\alpha_{\ell_{X}}$ the real part of the geodesic $c(X)$.

\subsection{The Weil representation} We let $\C[L'/L]$ be the group ring of $L'/L$, generated by the formal basis vectors $\e_h$ for $h \in L'/L$ and equipped with the scalar product $\langle \e_{h} ,\e_{h'} \rangle = \delta_{h,h'}$, which is conjugate-linear in the second variable. By $\Mp_{2}(\Z)$ we denote the integral metaplectic group consisting of pairs $(\gamma, \phi)$, where $\gamma = \left\{\left(\begin{smallmatrix}a & b \\ c & d \end{smallmatrix} \right) \in \SL_2(\Z)\right\}$ and $\phi:\H\rightarrow \C$
is a holomorphic function with $\phi(\tau)^{2}=c\tau+d$. The Weil representation $\rho_{L}$ of the metaplectic group $\Mp_{2}(\Z)$ is the unitary representation defined for the generators $S=\left(\left(\begin{smallmatrix}0 & -1 \\ 1 & 0 \end{smallmatrix} \right),\sqrt{\tau}\right)$ and $T=\left(\left(\begin{smallmatrix}1 & 1 \\ 0 & 1 \end{smallmatrix} \right), 1\right)$ of $\Mp_2(\Z)$ and $h \in \C[L'/L]$ by the formulas
\begin{align*}
 \rho_{L}(T) \e_h &= e(Q(h)) \e_h,\qquad  \rho_{L}(S) \e_h = \frac{\sqrt{i}}{\sqrt{\abs{L'/L}}}\sum_{h' \in L'/L} e(-(h',h)) \e_{h'},
\end{align*}
where $e(a):=e^{2\pi i a}$. The dual Weil representation will be denoted by $\overline{\rho}_{L}$.

 \subsection{Harmonic Maass forms} A smooth function $G: \H \to \C$ is called a harmonic Maass form of weight $\kappa \in \Z$ for $\Gamma$ if it is annihilated by the Laplace operator $\Delta_{\kappa}$ defined in \eqref{eq Laplace operator}, transforms like a modular form of weight $\kappa$ for $\Gamma$, and is a most of linear exponential growth at the cusps of $\Gamma$. Such a form has a Fourier expansion as in \eqref{eq Fourier expansion harmonic Maass form} at each cusp of $\Gamma$. We denote the space of harmonic Maass forms of weight $\kappa \in \Z$ for $\Gamma$ by $H_{\kappa}(\Gamma)$, and we let $H_{\kappa}^{+}(\Gamma)$ be the subspace of those forms which map to a cusp form under $\xi_{\kappa}$. Further, we let $M^{\text{!}}_{\kappa}(\Gamma)$ be the subspace of weakly holomorphic modular forms, consisting of those harmonic Maass forms which are holomorphic on $\H$, and we let $M_{\kappa}(\Gamma)$ and $S_{\kappa}(\Gamma)$ denote the spaces of holomorphic modular forms and cusp forms of weight $\kappa$ for $\Gamma$. 
 
 Vector valued harmonic Maass forms of half-integral weight $\kappa \in \frac{1}{2}+\Z$ for the Weil representation $\rho_{L}$ are defined to be smooth functions $f : \H \to \C[L'/L]$ which are annihilated (component-wise) by $\Delta_{\kappa}$, transform under $(\gamma,\phi)\in\Mp_{2}(\Z)$ as $f(\gamma\tau) = \phi(\tau)^{2\kappa}\rho_{L}(\gamma,\phi)f(\tau)$, and are at most of linear exponential growth as $v \to \infty$, uniformly in $u$. The corresponding spaces of vector valued harmonic Maass forms and modular forms are denoted by $H_{\kappa,\rho_{L}}, H_{\kappa,\rho_{L}}^{+}, M_{\kappa,\rho_{L}}^{!}, M_{\kappa,\rho_{L}}$ and $S_{\kappa,\rho_{L}}$. Harmonic Maass forms for the dual Weil representation are defined analogously.

\section{Regularized cycle integrals of harmonic Maass forms}\label{section regularized cycle integrals}

In this section we define regularized cycle integrals of harmonic Maass forms in $H_{2k+2}(\Gamma)$ along infinite geodesics, extending the results of \cite{bif}, Section 3.3. Let $X \in V$ with $Q(X) = m < 0$ such that $|m|/4N$ is a square. Then the stabilizer $\overline{\Gamma}_{X}$ is trivial and the geodesic $c(X)$ is infinite in $\Gamma \backslash \H$. Further, $X$ is orthogonal to two isotropic lines $\ell_{X},\tilde{\ell}_{X} \in \Iso(V)$.

Pick some number $c_{+} > 0$. If $G \in S_{2k+2}(\Gamma)$ is a cusp form, then the cycle integral along $c(X)$ converges, and if we write $G_{\ell} = G|_{2k+2}\sigma_{\ell} = \sum_{n > 0}a_{\ell}(n)e(nz)$ for the expansion of $G$ at the cusp $\ell$, then the cycle integral of $G$ along $c(X)$ can explicitly be evaluated as
\begin{align*}
&\frac{1}{\big( 2\sqrt{|m|N}i\big)^{k}i }\int_{c(X)}G(z)Q_{X}^{k}(z)dz \\
&= \left(\int_{c_{+}}^{\infty}G_{\ell_{X}}(r_{\ell_{X}} + iy)y^{k}dy + (-1)^{k+1}\int_{c_{-}}^{\infty}G_{\ell_{-X}}(r_{\ell_{-X}}+iy)y^{k}dy\right) \\
&= \bigg(\sum_{n > 0}a_{\ell_{X}}(n)e^{2\pi i nr_{\ell_{X}}}\frac{\Gamma(k+1,2\pi nc_{+})}{(2\pi n)^{k+1}}  + (-1)^{k+1}\sum_{n > 0}a_{\ell_{-X}}(n)e^{2\pi i n r_{\ell_{-X}}}\frac{\Gamma(k+1,2\pi nc_{-})}{(2\pi n)^{k+1}}\bigg),
\end{align*}
where $c_{-} = \Im\big(\sigma_{\ell_{-X}}^{-1}(r_{\ell_{X}}+ic_{+})\big) = 1/c_{+}b^{2}$ if $r_{\ell_{X}} = a/b$ with coprime $a,b \in \Z$.

Now, if $G \in M_{2k+2}^{!}(\Gamma)$ is a weakly holomorphic modular form whose constant coefficients vanish at all cusps, then the cycle integral along $c(X)$ typically diverges, but the right-hand side of the above formula, with the sums running over all $n \neq 0$, still makes sense. This regularization of the cycle integral was considered before in \cite{brifrike, briguka2, briguka}, and it was used to construct the Shintani lift and regularized $L$-values of weakly holomorphic modular forms. Using a similar idea, regularized cycle integrals of harmonic Maass forms $G \in H_{0}^{+}(\Gamma)$ were defined in \cite{bif}, and they were shown to appear as Fourier coefficients of harmonic Maass forms of weight $1/2$. We now define the regularized cycle integral of $G \in H_{2k+2}(\Gamma)$. To this end, we first determine an antiderivative of $G(iy)y^{k}$.

\begin{lem}\label{lemma regularized cycle integral}
	Let $G \in H_{2k+2}(\Gamma)$ and let $a^{\pm}(n)$ denote the Fourier coefficients of $G$. For $k > 0$ we have
	\begin{align*}
	&\int G(iy)y^{k}dy = a^{+}(0)\frac{y^{k+1}}{k+1}  - \sum_{n \neq 0}a^{+}(n)\frac{\Gamma(k+1,2\pi n y)}{(2\pi n )^{k+1}} - a^{-}(0)\frac{y^{-k}}{k} \\
		&\quad - \sum_{n \neq 0}a^{-}(n)\bigg( \E_{2k+2}(4\pi ny)\frac{\Gamma(k+1,2\pi n y)}{(2\pi n)^{k+1}}-\frac{2^{-2k-1}k!}{(2\pi n)^{k+1}}\sum_{j=0}^{k} \frac{(-1)^{j}}{j!}\E_{2k+2-j}(2\pi ny)\bigg).
	\end{align*}
	For $k = 0$ the same formula holds, but $\frac{y^{-k}}{k}$ has to be replaced by $-\log(y)$.
\end{lem}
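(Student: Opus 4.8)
The plan is to compute the indefinite integral $\int G(iy)y^k\,dy$ by substituting the Fourier expansion \eqref{eq Fourier expansion harmonic Maass form} of $G$ evaluated at $z=iy$ and integrating term by term. With $\kappa=2k+2$ one has, at $z=iy$,
\[
G^{+}(iy)=\sum_{n}a^{+}(n)e^{-2\pi n y},\qquad G^{-}(iy)=a^{-}(0)y^{-2k-1}+\sum_{n\neq 0}a^{-}(n)\E_{2k+2}(4\pi n y)e^{-2\pi n y},
\]
so that $G(iy)y^k$ decomposes into four kinds of summands, which I would handle separately. (Logically it is cleanest to regard the lemma as the assertion that the stated right-hand side is an antiderivative and to verify it by differentiation, which sidesteps any interchange-of-sum-and-integral issue; the integration-by-parts computation below is how the formula is found.)

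First I would dispatch the three easy pieces. The holomorphic constant term integrates to $a^{+}(0)\frac{y^{k+1}}{k+1}$. For $n\neq 0$, the chain rule gives $\frac{d}{dy}\Gamma(k+1,2\pi n y)=-(2\pi n)^{k+1}e^{-2\pi n y}y^k$, hence $\int a^{+}(n)e^{-2\pi n y}y^k\,dy=-a^{+}(n)\frac{\Gamma(k+1,2\pi n y)}{(2\pi n)^{k+1}}$. The non-holomorphic constant term is $\int a^{-}(0)y^{-k-1}\,dy$, which equals $-a^{-}(0)\frac{y^{-k}}{k}$ when $k>0$ and $a^{-}(0)\log y$ when $k=0$; this accounts for the exceptional replacement in the statement.

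The remaining work is the non-holomorphic non-constant term $\int \E_{2k+2}(4\pi n y)e^{-2\pi n y}y^k\,dy$ for $n\neq 0$, which I would evaluate by integration by parts with $u=\E_{2k+2}(4\pi n y)$ and $dv=e^{-2\pi n y}y^k\,dy$. Since $\E_{\kappa}$ is an antiderivative of $e^{w}(-w)^{-\kappa}$ and $2k+2$ is even, $du=(4\pi n)^{-2k-1}e^{4\pi n y}y^{-2k-2}\,dy$, while $v=-\frac{\Gamma(k+1,2\pi n y)}{(2\pi n)^{k+1}}$ by the previous paragraph; the boundary term $uv$ is exactly the first summand $-\E_{2k+2}(4\pi n y)\frac{\Gamma(k+1,2\pi n y)}{(2\pi n)^{k+1}}$ of the claimed formula. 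For the leftover integral $-\int v\,du$ I would insert the classical finite expansion $\Gamma(k+1,x)=k!\,e^{-x}\sum_{j=0}^{k}\frac{x^{j}}{j!}$, which reduces it to a linear combination of elementary integrals $\int e^{2\pi n y}y^{j-2k-2}\,dy$; each of these equals $(-1)^{j}(2\pi n)^{2k+1-j}\E_{2k+2-j}(2\pi n y)$, again directly from the definition of $\E$ together with the parity of $2k+2-j$. Collecting the powers of $2$, $2\pi n$ and the factorials then produces precisely $\frac{2^{-2k-1}k!}{(2\pi n)^{k+1}}\sum_{j=0}^{k}\frac{(-1)^{j}}{j!}\E_{2k+2-j}(2\pi n y)$, i.e.\ the second summand.

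The only real obstacle is bookkeeping: correctly tracking the many factors of $2$, $\pi$, $n$ and the signs $(-1)^{j}$ generated by the parity of the lower index of $\E$ during the integration by parts. The finite-sum identity for $\Gamma(k+1,x)$ is the structural input that makes the leftover integral collapse into the stated closed form. Assembling the four pieces (with the $k=0$ substitution noted above) then yields the lemma.
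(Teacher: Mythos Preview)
Your proposal is correct and follows essentially the same approach as the paper: substitute the Fourier expansion, dispatch the three elementary pieces exactly as you do, and for the $a^{-}(n)$-term integrate by parts with $u=\E_{2k+2}(4\pi ny)$, $dv=e^{-2\pi ny}y^k\,dy$, then insert $\Gamma(k+1,x)=k!\,e^{-x}\sum_{j=0}^{k}x^{j}/j!$ to collapse the leftover integral. Your remark that the statement is most cleanly certified by differentiating the right-hand side is a sound addition not made explicit in the paper.
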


\begin{proof}
	Plugging in the Fourier expansion of $G$ we get
	\begin{align*}
	\int G(iy)y^{k}dy &= \int\bigg(a^{+}(0)+\sum_{n \neq 0}a^{+}(n)e^{-2\pi n y}\\
	&\qquad\qquad +a^{-}(0)y^{-2k-1}+\sum_{n \neq 0}a^{-}(n)\E_{2k+2}(4\pi n y)e^{-2\pi n y}\bigg)y^{k}dy.
	\end{align*}
	The integrals corresponding to the coefficients $a^{+}(0), a^{-}(0)$ and $a^{+}(n), n\neq 0$, can be evaluated as
	\begin{align*}
	\int y^{k}dy =\frac{y^{k+1}}{k+1}, \qquad 
	\int y^{-k-1}dy = \begin{dcases}
	\log(y), & \text{if } k =0, \\
	-\frac{y^{-k}}{k}, & \text{if } k >0,
	\end{dcases}
	\end{align*}
	and
	\begin{align*}
	 \int e^{-2\pi n y}y^{k}dy = -\frac{\Gamma(k+1,2\pi n y)}{(2\pi n)^{k+1}}.
	\end{align*}
	In the integral corresponding to $a^{-}(n), n \neq 0,$ we first use integration by parts to obtain
	\begin{align*}
	\int \E_{2k+2}(4\pi ny)e^{-2\pi n y}y^{k}dy &= -\E_{2k+2}(4\pi ny)\frac{\Gamma(k+1,2\pi n y)}{(2\pi n)^{k+1}} \\
	&\quad + \frac{4\pi n}{(2\pi n)^{k+1}}\int (-4\pi n y)^{-2k-2}e^{4\pi n y}\Gamma(k+1,2\pi n y)dy.
	\end{align*}
	In the integral on the right-hand side we plug in the formula $\Gamma(k+1,x) = k!e^{-x}\sum_{j=0}^{k}\frac{x^{j}}{j!}$ to get
	\begin{align*}
	\frac{4\pi n}{(2\pi n)^{k+1}}\int(-4\pi n y)^{-2k-2}e^{4\pi n y}\Gamma(k+1,2\pi n y)dy 
	&= \frac{2^{-2k-1} k!}{(2\pi n)^{k+1}}\sum_{j=0}^{k}\frac{(-1)^{j}}{j!}\E_{2k+2-j}(2\pi n y).
	\end{align*}
	This finishes the proof.
\end{proof}

The above lemma motivates the following definition.

\begin{dfn}
	Let $X \in V$ with $Q(X) = m < 0$ such that $|m|/4N$ is a square, and let $G \in H_{2k+2}(\Gamma)$. Let $T_{+},T_{-} > 0$ be arbitrary. For $k > 0$ we define the regularized cycle integral of $G$ along $c(X)$ by
	\begin{align*}
		&\frac{1}{\big( 2\sqrt{|m|N}i\big)^{k}i}\int_{c(X)}^{\reg}G(z)Q_{X}^{k}(z)dz =\int_{c_{+}}^{T_{+}}G_{\ell_{X}}(r_{\ell_{X}} + iy)y^{k}dy  \\
		&\qquad- a_{\ell_{X}}^{+}(0)\frac{T_{+}^{k+1}}{k+1}  + \sum_{n \neq 0}a_{\ell_{X}}^{+}(n)e^{2\pi i n r_{\ell_{X}}}\frac{\Gamma(k+1,2\pi n T_{+})}{(2\pi n )^{k+1}}+ a_{\ell_{X}}^{-}(0)\frac{T_{+}^{-k}}{k} \\
		&\qquad + \sum_{n \neq 0}a_{\ell_{X}}^{-}(n)e^{2\pi i n r_{\ell_{X}}}\bigg(\E_{2k+2}(4\pi nT_{+})\frac{\Gamma(k+1,2\pi n T_{+})}{(2\pi n)^{k+1}} \\
		&\qquad \qquad \qquad \qquad \qquad \qquad -\frac{2^{-2k-1}k!}{(2\pi n)^{k+1}}\sum_{j=0}^{k} \frac{(-1)^{j}}{j!}\E_{2k+2-j}(2\pi nT_{+})\bigg) \\
		&\qquad+(-1)^{k+1}\cdot (\text{same expression with $\ell_{X},c_{+},T_{+},$ replaced by $\ell_{-X},c_{-},T_{-}$}),
	\end{align*}

where $c_{-} = \Im\big(\sigma_{\ell_{-X}}^{-1}(r_{\ell_{X}}+ic_{+})\big) = 1/c_{+}b^{2}$ if $r_{\ell_{X}} = a/b$ with coprime $a,b \in \Z$.

	For $k = 0$ the regularized cycle integral is defined by the same formula, but $\frac{T_{\pm}^{-k}}{k}$ has to be replaced by $-\log(T_{\pm})$.
\end{dfn}

\begin{rem}
	Lemma \ref{lemma regularized cycle integral} implies that the regularized cycle integral is well-defined, i.e., independent of the choice of $T_{+}$ and $T_{-}$. If we plug in $T_{+} = c_{+}$ and $T_{-} = c_{-}$ we obtain a formula for the regularized cycle integral which is analogous to the one given above in the case that $G$ is a cusp form.
\end{rem}

In analogy to Theorem 3.2 of \cite{bif}, we give another representation of the regularized cycle integral involving the Bernoulli polynomials $B_{k}(x)$ defined by $te^{xt}/(e^{t}-1) = \sum_{k =0}^{\infty}B_{k}(x)t^{k}/k!$, and the polygamma function $\psi^{(k)}(x) = \frac{d^{k+1}}{dx^{k+1}}\log(\Gamma(x))$.

\begin{prop}\label{prop regularized cycle integrals alternative}
	Let $X \in V$ with $Q(X) = m < 0$ such that $|m|/4N$ is a square, and let $G \in H_{2k+2}(\Gamma)$. Let $T_{+},T_{-} > 0$ be arbitrary. For $k > 0$ we have the formula
	\begin{align*}
	&\frac{1}{\big(2\sqrt{|m|N}i\big)^{k}i}\int_{c(X)}^{\reg}G(z)Q_{X}^{k}(z)dz = \int_{c_{+}}^{T_{+}}G_{\ell_{X}}(iy)y^{k}dy \\
	&+ (-1)^{k}\frac{(i\alpha_{\ell_{X}})^{k+1}}{k+1}\int_{iT_{+}/\alpha_{\ell_{X}}}^{iT_{+}/\alpha_{\ell_{X}}+1}\big(G_{\ell_{X}}(r_{\ell_{X}} + \alpha_{\ell_{X}}z)-a_{\ell_{X}}^{-}(0)y^{-1-2k}\big)B_{k+1}\left(z\right)dz \\
	& - (-1)^{k}(i\alpha_{\ell_{X}})^{-k}\frac{2^{2k-1}k!}{(2k+1)!} \\
	&\quad  \times \overline{\int_{iT_{+}/\alpha_{\ell_{X}}}^{iT_{+}/\alpha_{\ell_{X}}+1}\big((\xi_{2k+2}G_{\ell_{X}})(r_{\ell_{X}}+\alpha_{\ell_{X}}z)-a_{\xi_{2k+2}G,\ell_{X}}(0)\big)\left(\psi^{(k)}\left(z\right)+(-1)^{k}\psi^{(k)}\left(1-z\right)\right)dz} \\
	& - (-1)^{k}(i\alpha_{\ell_{X}})^{-k}\frac{k!}{(2k+1)!}\sum_{d = 0}^{k}\frac{(d+k)!}{(d+1)!} \sum_{j=0}^{k-d}\binom{2k+1}{j} \\
	&\quad\times\overline{\int_{iT_{+}/\alpha_{\ell_{X}}}^{iT_{+}/\alpha_{\ell_{X}}+1}\big((\xi_{2k+2}G_{\ell_{X}})(r_{\ell_{X}}+\alpha_{\ell_{X}}z)-a_{\xi_{2k+2}G,\ell_{X}}(0)\big)B_{d+1}\left(x\right)\left(-iy\right)^{-k-d-1}dz} \\
	&+ a_{\ell_{X}}^{-}(0)\frac{T_{+}^{-k}}{k}\\
	&+(-1)^{k+1}\cdot (\text{same expression with $\ell_{X},c_{+},T_{+}$ replaced by $\ell_{-X},c_{-}, T_{-}$}).
	\end{align*}
	For $k = 0$ the same formula holds, but $\frac{T_{+}^{-k}}{k}$ in the next-to-last line has to be replaced with $-\log(T_{+})$.
\end{prop}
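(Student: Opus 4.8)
The plan is to start from the explicit formula for the regularized cycle integral in the Definition, group its terms by Fourier coefficient, and for each group identify the closed finite integral over a fundamental period that reproduces it. First I would reduce to the case $T_{+} = c_{+}$ (hence also $T_{-} = c_{-}$) and to a single cusp $\ell_{X}$, since the $(-1)^{k+1}$-weighted $\ell_{-X}$-contribution is handled identically; by the Remark following the Definition this is harmless. Then I would rewrite $\int_{c_{+}}^{T_{+}} G_{\ell_{X}}(r_{\ell_{X}}+iy)y^{k}\,dy$ as $\int_{c_{+}}^{T_{+}} G_{\ell_{X}}(iy)y^{k}\,dy$ plus a correction; the difference is accounted for by the Bernoulli-polynomial period integral, using the standard Fourier expansion $B_{k+1}(x) = -\frac{(k+1)!}{(2\pi i)^{k+1}}\sum_{n\neq 0} n^{-k-1}e^{2\pi i n x}$, which converts the horizontal period integral of $e(n z)$ against $B_{k+1}(z)$ into the incomplete-gamma-type terms $\Gamma(k+1,2\pi n T_{+})/(2\pi n)^{k+1}$ appearing in the Definition.

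The core of the argument is the non-holomorphic part. Here I would invoke the relation $\xi_{2k+2}G = (1-\kappa)\overline{a_{G}^{-}(0)} - \sum_{n>0}(4\pi n)^{-2k-1}\overline{a_{G}^{-}(-n)}e(nz)$ with $\kappa = 2k+2$ to rewrite the coefficients $a_{\ell_{X}}^{-}(n)$ in terms of the Fourier coefficients of $\xi_{2k+2}G_{\ell_{X}}$. The two blocks in the Definition built from $a_{\ell_{X}}^{-}(n)$ — namely the one with $\E_{2k+2}(4\pi n T_{+})\Gamma(k+1,2\pi n T_{+})$ and the one with $\sum_{j}\frac{(-1)^j}{j!}\E_{2k+2-j}(2\pi n T_{+})$ — must be matched against the two families of period integrals in the Proposition: the one against $\psi^{(k)}(z)+(-1)^k\psi^{(k)}(1-z)$, and the one against $B_{d+1}(x)(-iy)^{-k-d-1}$. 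For this I would use the Hurwitz-type Fourier expansion of the polygamma function (Lerch/Kummer): $\psi^{(k)}(z) = (-1)^{k+1}k!\sum_{n\geq 0}(n+z)^{-k-1}$, together with its reflection formula $\psi^{(k)}(z)+(-1)^k\psi^{(k)}(1-z) = (-1)^k\pi\frac{d^k}{dz^k}\cot(\pi z)$, whose Fourier expansion on a horizontal line is a geometric-type series in $e(nz)$. Integrating $\overline{(\xi_{2k+2}G_{\ell_{X}})(r_{\ell_{X}}+\alpha_{\ell_X}z)}$ — i.e. $\sum_{n>0}(4\pi n)^{-2k-1}a_{G,\ell_X}^{-}(-n)e(-n\bar z)$, conjugated — against these expansions over the horizontal segment from $iT_{+}/\alpha_{\ell_{X}}$ to $iT_{+}/\alpha_{\ell_{X}}+1$ produces exactly the combination $\int^{\infty}$-type tails that, after the substitution $w = 2\pi n y$, collapse to the $\E_{\bullet}$-values in the Definition. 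The lower-order block with $B_{d+1}(x)(-iy)^{-k-d-1}$ and the combinatorial sum $\sum_{d}\frac{(d+k)!}{(d+1)!}\sum_j\binom{2k+1}{j}$ should arise from expanding $e^{-w}$ in the incomplete Gamma function $\Gamma(k+1,w)$ and matching powers of $y$; equivalently it is the ``polynomial part'' complementary to the $\psi^{(k)}$-term, and I would verify the binomial identity that makes the two presentations agree.

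I would then assemble the pieces: the holomorphic $a^{+}$-coefficients give the $B_{k+1}$ integral plus the $a_{\ell_X}^{+}(0)$ term (which is absorbed by subtracting the constant coefficient in the integrand, as written), the constant $a_{\ell_X}^{-}(0)$ term contributes both the subtracted $y^{-1-2k}$ inside the $B_{k+1}$ integral and the separate $a_{\ell_X}^{-}(0)T_{+}^{-k}/k$ term, and the nonzero $a^{-}$-coefficients give the two $\xi_{2k+2}G$-period integrals; summing over $\ell_{X}$ and $\ell_{-X}$ with the sign $(-1)^{k+1}$ and comparing with the Definition finishes it. For $k = 0$ all of the above goes through with $\Gamma(1,x) = e^{-x}$, $B_{1}(x) = x - 1/2$, $\psi^{(0)}(z) = \psi(z)$, and the single replacement $T_{+}^{-k}/k \rightsquigarrow -\log(T_{+})$; no new phenomena appear. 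The main obstacle I expect is purely bookkeeping: correctly tracking the $\alpha_{\ell_{X}}$-scaling, the conjugation bars on the $\xi_{2k+2}G$-integrals, and the phase factors $e^{2\pi i n r_{\ell_X}}$ through the Fourier expansions of $B_{k+1}$, $\psi^{(k)}$, and the geometric series, and then proving the finite combinatorial identity relating $\sum_{j}\frac{(-1)^j}{j!}\E_{2k+2-j}$ to the $\psi^{(k)}$-plus-$B_{d+1}$ decomposition — essentially the same identity that underlies the integration-by-parts step in the proof of Lemma~\ref{lemma regularized cycle integral}, now read ``in reverse'' and term-by-term in the Fourier expansion.
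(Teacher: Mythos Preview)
Your proposal is correct and follows essentially the same approach as the paper: match the Definition against the Proposition term by term using the period-integral evaluations of $B_{k+1}(z)e^{2\pi i n z}$ and of $\big(\psi^{(k)}(z)+(-1)^{k}\psi^{(k)}(1-z)\big)e^{2\pi i n z}$ over a horizontal segment, together with the combinatorial identity decomposing $\sum_{j=0}^{k}\frac{(-1)^{j}}{j!}\E_{2k+2-j}(y)$ into an $\E_{k+1}$-part and a polynomial part, which is exactly the identity the paper states and the two lemmas it records (Lemmas~\ref{lm:evalB} and~\ref{lm:evalpsi}). The only cosmetic differences are that the paper evaluates the period integrals directly (via the Hurwitz-series expansion of $\psi^{(k)}$, not the cotangent reflection formula) rather than expanding the kernels in Fourier series, and it does not reduce to $T_{+}=c_{+}$ --- that reduction is unnecessary and your subsequent discussion in fact works with general $T_{+}$ anyway.
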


\begin{rem}
	Although the above formula looks very complicated we made the effort to write it down since it is in fact the expression that we obtain in the computation of the Fourier expansion of the Shintani lift of $G \in H_{2k+2}(\Gamma)$. Also note that the formula beautifully simplifies for weakly holomorphic modular forms $G \in M_{2k+2}^{!}(\Gamma)$.
\end{rem}

The proposition can be proved by a straightforward calculation using the identity
\begin{align*}
		\sum_{j=0}^{k}\frac{(-1)^{j}}{j!}\E_{2k+2-j}(y) &= \frac{(-1)^{k+1}2^{2k}k!}{(2k+1)!}\E_{k+1}(y) -  e^{y}\sum_{d=0}^{k}y^{-k-d-1}\frac{(d+k)!}{(2k+1)!}\sum_{j = 0}^{k-d}\binom{2k+1}{j},
		\end{align*}
		(which can most easily be checked by comparing the derivatives of both sides) and the following two lemmas.

\begin{lem}\label{lm:evalB}
		For $n \in \Z$ and $j \geq 1$ we have
		\begin{align*}
		\int_{0}^{1}B_{0}(x)e^{2\pi i n x}dx = \begin{cases}
		1, & n = 0, \\
		0, & n \neq 0.
		\end{cases} \qquad \int_{0}^{1}B_{j}(x)e^{2\pi i n x}dx = \begin{dcases}
		0, & n = 0, \\
		\frac{(-1)^{j+1}j!}{(2\pi in)^{j}}, & n \neq 0.
		\end{dcases}
		\end{align*}
		Further, for $k \in \Z_{\geq 0},y > 0$ and $n \in \Z$ and  we have
		\begin{align*}
		\int_{iy}^{iy+1}B_{k}(z)e^{2\pi i nz}dz = \begin{dcases}
		(iy)^{k}, & \text{if } n =0, \\
		-ki^{k}\frac{\Gamma(k,2\pi n y)}{(2\pi n )^{k}},& \text{if } n \neq 0.
		\end{dcases}
		\end{align*}
	\end{lem}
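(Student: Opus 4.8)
The plan is to establish the three Fourier-type integral evaluations in Lemma~\ref{lm:evalB} directly, since each reduces to an elementary antiderivative computation; the only subtlety is the third formula, where the path of integration is a unit-length vertical segment rather than the real interval $[0,1]$, so the periodicity of $B_{k}$ cannot be invoked and one must integrate the Bernoulli polynomial against $e^{2\pi i n z}$ on a genuine contour.

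For the first two formulas, recall that $B_{0}(x) = 1$, so $\int_{0}^{1}e^{2\pi i n x}dx$ is $1$ for $n = 0$ and vanishes otherwise; the $n = 0$ case of the second formula is $\int_{0}^{1}B_{j}(x)dx = 0$ for $j \geq 1$, a standard property of Bernoulli polynomials. For $n \neq 0$ and $j \geq 1$ I would integrate by parts repeatedly, using $B_{j}'(x) = jB_{j-1}(x)$ and the boundary values $B_{j}(0) = B_{j}(1) = B_{j}$ for $j \neq 1$ (with $B_{1}(0) = -1/2$, $B_{1}(1) = 1/2$). Each integration by parts produces a boundary term $\frac{B_{j}(x)e^{2\pi i n x}}{2\pi i n}\big|_{0}^{1}$, which vanishes for $j \geq 2$ by periodicity of $e^{2\pi i n x}$ and equality of the endpoint values of $B_{j}$; for $j = 1$ the boundary term is $\frac{1}{2\pi i n}\cdot(B_{1}(1) - B_{1}(0)) = \frac{1}{2\pi i n}$, matching $\frac{(-1)^{2}1!}{(2\pi i n)^{1}}$. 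Iterating down to $B_{0}$ and tracking the factors $j, j-1, \dots, 1$ together with the signs from $\frac{1}{2\pi i n}$ at each step yields the closed form $\frac{(-1)^{j+1}j!}{(2\pi i n)^{j}}$; a clean way to organize this is to prove it by induction on $j$, the base case $j = 1$ being the computation just described.

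For the third formula I would substitute $z = iy + t$ with $t \in [0,1]$, so $\int_{iy}^{iy+1}B_{k}(z)e^{2\pi i n z}dz = e^{-2\pi n y}\int_{0}^{1}B_{k}(iy + t)e^{2\pi i n t}dt$. The $n = 0$ case then asks for $\int_{0}^{1}B_{k}(iy + t)dt$, which equals $(iy)^{k}$ by the Raabe-type identity $\int_{a}^{a+1}B_{k}(x)dx = a^{k}$ (itself immediate from $B_{k}'(x)/(k) = B_{k-1}(x)$ and induction). For $n \neq 0$ I would again integrate by parts along the segment, now with the full contour boundary terms $\frac{B_{k}(z)e^{2\pi i n z}}{2\pi i n}\big|_{iy}^{iy+1}$; since $e^{2\pi i n z}$ has the same value at $z = iy$ and $z = iy + 1$ (namely $e^{-2\pi n y}$) but $B_{k}(iy+1) - B_{k}(iy) = k(iy)^{k-1}$ by the finite-difference identity $B_{k}(x+1) - B_{k}(x) = kx^{k-1}$, each step produces a nonzero boundary contribution proportional to a power of $iy$ times $e^{-2\pi n y}$. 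Collecting these and recognizing the resulting finite sum $k! e^{-2\pi n y}\sum_{m=0}^{k-1}\frac{(iy)^{m}}{m!}(2\pi i n)^{-(k-m)}\cdot(\text{sign})$ as the incomplete Gamma expansion $\Gamma(k, 2\pi n y) = (k-1)! e^{-2\pi n y}\sum_{m=0}^{k-1}\frac{(2\pi n y)^{m}}{m!}$ gives the stated answer $-k i^{k}\frac{\Gamma(k, 2\pi n y)}{(2\pi n)^{k}}$; matching the power of $i$ requires writing $(iy)^{m} = i^{m}y^{m}$ and absorbing $i^{k}/i^{m} = i^{k-m}$ into $(2\pi i n)^{-(k-m)} = i^{-(k-m)}(2\pi n)^{-(k-m)}$.

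The main obstacle is bookkeeping: in the third identity one must correctly propagate the signs and factorials through the iterated integration by parts and then match the surviving contour boundary terms against the series expansion of the incomplete Gamma function, being careful that the boundary values of the Bernoulli polynomial do \emph{not} cancel (unlike in the real-interval case) precisely because of the $kx^{k-1}$ difference. A cleaner alternative that avoids induction entirely is to differentiate both sides of each claimed identity with respect to $y$ (using $\frac{d}{dy}\Gamma(k,2\pi n y) = -2\pi n (2\pi n y)^{k-1}e^{-2\pi n y}$ and $\frac{d}{dy}\int_{iy}^{iy+1} = i\big(B_{k}(iy+1) - B_{k}(iy)\big)e^{\cdots}$), verify the resulting first-order identities directly, and fix the constant of integration by evaluating at a convenient limit such as $y \to \infty$; I would present whichever of these two routes turns out shorter in the final writeup.
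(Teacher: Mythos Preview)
Your proposal is correct, but your route to the third formula is more laborious than the paper's. The paper handles the first two identities much as you do (trivial for $B_{0}$; integration by parts for $B_{1}$), except that for $j \geq 2$ it invokes the Fourier series $B_{j}(x) = -\frac{j!}{(2\pi i)^{j}}\sum_{m \neq 0}\frac{e^{2\pi i m x}}{m^{j}}$ rather than induct---a one-line argument if you are willing to quote that series.

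The real difference is in the third identity. Rather than integrate by parts along the horizontal segment and collect the boundary terms into the incomplete Gamma series, the paper simply expands $B_{k}(z) = B_{k}(x+iy)$ via the addition formula
\[
B_{k}(x+iy) = \sum_{j=0}^{k}\binom{k}{j}B_{j}(x)(iy)^{k-j},
\]
so that $\int_{iy}^{iy+1}B_{k}(z)e^{2\pi i n z}\,dz = e^{-2\pi n y}\sum_{j=0}^{k}\binom{k}{j}(iy)^{k-j}\int_{0}^{1}B_{j}(x)e^{2\pi i n x}\,dx$, and then reads off the answer from the first two identities. For $n = 0$ only the $j=0$ term survives and gives $(iy)^{k}$; for $n \neq 0$ only $j \geq 1$ contribute, and the resulting sum $-k!\,e^{-2\pi n y}\sum_{j=1}^{k}\frac{(-1)^{j}(iy)^{k-j}}{(k-j)!\,(2\pi i n)^{j}}$ is exactly $-k\,i^{k}\Gamma(k,2\pi n y)/(2\pi n)^{k}$ after the substitution $m = k-j$. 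This bypasses the sign-and-factorial bookkeeping you flag as the main obstacle, and makes your alternative ``differentiate in $y$'' route unnecessary.
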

	
	\begin{proof}
		For $B_{0}(x) = 1$ this is clear, for $B_{1}(x) = x-1/2$ we use integration by parts, and for $j > 1$ we use the absolutely convergent Fourier series $B_{j}(x) = -\frac{j!}{(2\pi i)^{j}}\sum_{k \neq 0}\frac{e^{2\pi i k x}}{k^{j}}$ for $0 < x < 1$. The last formula follows from the others since $B_{k}(z) = \sum_{j=0}^{k}\binom{k}{j}B_{j}(x)(iy)^{k-j}$.
	\end{proof}

	\begin{lem}\label{lm:evalpsi}
	For $k \in \Z_{\geq 0}, y > 0$, and $n \in \Z$ we have
		\begin{align*}
		&\int_{iy}^{iy+1}\left(\psi^{(k)}(z)+(-1)^{k}\psi^{(k)}(1-z)\right)e^{2\pi i n z}dz \\
		&\quad = \begin{cases}
		2\log(y), & \text{if $n= 0$ and $k = 0$},  \\
		2(-1)^{k+1}(k-1)!(iy)^{-k}  , & \text{if $n = 0$ and $k > 0$}, \\
		-2(2\pi i n)^{k}k!\E_{k+1}(-2\pi n y), & \text{if }n \neq 0.
		\end{cases}
		\end{align*}
	\end{lem}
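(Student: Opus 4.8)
The plan is to prove Lemma~\ref{lm:evalpsi} by reducing it to the already-established Lemma~\ref{lm:evalB} via the classical Fourier/Hurwitz expansion of the polygamma functions. First I would recall that for $\Re(z) \in (0,1)$ one has the Hurwitz-type identity
\[
\psi^{(k)}(z) + (-1)^{k}\psi^{(k)}(1-z) = (-1)^{k+1}k!\sum_{m \in \Z}\frac{1}{(z+m)^{k+1}},
\]
which for $k = 0$ is the reflection formula $\psi(z)-\psi(1-z) = -\pi\cot(\pi z)$ (interpreted as a principal-value sum) and for $k \geq 1$ follows from differentiating $k$ times. Along the horizontal segment from $iy$ to $iy+1$ the parameter $z$ has imaginary part $y > 0$, so every term $1/(z+m)^{k+1}$ is smooth and the series converges absolutely and uniformly, which justifies the interchange of summation and integration below.

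Next I would substitute this identity into the integral and exchange sum and integral:
\[
\int_{iy}^{iy+1}\!\!\bigl(\psi^{(k)}(z)+(-1)^{k}\psi^{(k)}(1-z)\bigr)e^{2\pi i nz}\,dz = (-1)^{k+1}k!\sum_{m\in\Z}\int_{iy}^{iy+1}\frac{e^{2\pi i nz}}{(z+m)^{k+1}}\,dz.
\]
Shifting the variable $z \mapsto z-m$ in the $m$-th integral turns the sum of integrals over $[iy,iy+1]$ into a single integral over the whole line $iy + \R$, picking up the factor $e^{-2\pi i n m} = 1$ since $n\in\Z$. Thus the right-hand side equals $(-1)^{k+1}k!\int_{iy+\R}(z)^{-k-1}e^{2\pi i nz}\,dz$ (with the principal value at $m$ such that $\Re(z)=0$ only relevant when $k=0$). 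This last integral is a standard one: for $n \neq 0$ one evaluates $\int_{-\infty}^{\infty}(t+iy)^{-k-1}e^{2\pi i n(t+iy)}\,dt$ by contour shifting (closing in the appropriate half-plane according to $\sgn(n)$) and relating the result to the incomplete Gamma / $\E_{k+1}$ function via the definition $\E_{\kappa}(w) = \int$ of an antiderivative of $e^{w}(-w)^{-\kappa}$; for $n = 0$ and $k > 0$ one uses $\int_{-\infty}^{\infty}(t+iy)^{-k-1}\,dt = 0$ by oddness... wait, rather by the explicit antiderivative $-\frac{1}{k}(t+iy)^{-k}$, which vanishes at $\pm\infty$, giving $0$ — but the claimed answer is nonzero, so instead one must \emph{not} combine the terms and directly use the last formula of Lemma~\ref{lm:evalB}.

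Reassessing: the cleaner route, and the one I would actually carry out, is to avoid the line-integral rearrangement entirely and instead note that term-by-term $\int_{iy}^{iy+1}(z+m)^{-k-1}e^{2\pi i nz}\,dz$ does not directly match Lemma~\ref{lm:evalB}; rather, I would integrate by parts $k$ times to pass from $\psi^{(k)}$ down to $\psi^{(0)} = \psi$, reducing everything to the single case $k=0$ together with the Bernoulli-type boundary terms, and then handle $k=0$ using $\psi(z)-\psi(1-z) = -\pi\cot(\pi z) = \pi i - 2\pi i\sum_{j\geq 0}e^{2\pi i jz}$ (valid for $\Im z > 0$), integrating the resulting geometric series against $e^{2\pi i nz}$ over $[iy,iy+1]$ and recognizing $\sum_{j}$-type sums as the series defining $\E_{1}(-2\pi ny) = \Gamma(0,2\pi ny)$-shaped expressions. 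The three cases ($n=0,k=0$; $n=0,k>0$; $n\neq 0$) then fall out by tracking constants. The main obstacle I anticipate is purely bookkeeping: correctly matching the normalization of $\E_{k+1}$ used in this paper (the piecewise definition with the $\Ei$ term and the alternating sum) against whatever closed form the cotangent/geometric-series computation produces, and getting the signs and the factor $(-1)^{k+1}k!$ exactly right — in particular keeping straight that for $n>0$ versus $n<0$ the relevant half-plane for convergence differs, which is precisely why $\E_{k+1}(-2\pi ny)$ rather than $\E_{k+1}(2\pi ny)$ appears. None of the individual steps is deep; the care lies in the constants.
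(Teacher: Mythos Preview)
Your central identity
\[
\psi^{(k)}(z)+(-1)^{k}\psi^{(k)}(1-z)=(-1)^{k+1}k!\sum_{m\in\Z}\frac{1}{(z+m)^{k+1}}
\]
is false: differentiating the reflection formula $\psi(z)-\psi(1-z)=-\pi\cot(\pi z)$ $k$ times gives the combination $\psi^{(k)}(z)-(-1)^{k}\psi^{(k)}(1-z)$ on the left, not the sum with $+(-1)^{k}$ that appears in the lemma. In particular, for $k=0$ the integrand is $\psi(z)+\psi(1-z)$, which is \emph{not} $-\pi\cot(\pi z)$. This sign error is exactly why your full-line rearrangement produced $0$ for $n=0$, $k>0$ instead of the nonzero value $2(-1)^{k+1}(k-1)!(iy)^{-k}$; the contradiction you noticed was a symptom of the wrong identity, not of the rearrangement step. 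Your fallback plan (integrate by parts down to $k=0$, then use the cotangent expansion) inherits the same mistake at the base case.

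The paper's proof proceeds differently. For $n=0$ one simply antidifferentiates, using that $\psi^{(k)}$ has primitive $\psi^{(k-1)}$ and the recurrence $\psi^{(k-1)}(w+1)-\psi^{(k-1)}(w)=(-1)^{k-1}(k-1)!\,w^{-k}$ to evaluate the boundary terms. For $n\neq 0$ one inserts the two series $\psi^{(k)}(z)=(-1)^{k+1}k!\sum_{m\ge 0}(z+m)^{-k-1}$ and $\psi^{(k)}(1-z)=(-1)^{k+1}k!\sum_{m\ge 0}(1-z+m)^{-k-1}$ \emph{separately}; after telescoping, each becomes a half-line integral $\int_{iy}^{iy+\infty}e^{2\pi i nz}z^{-k-1}\,dz$ (the second one complex-conjugated), and these are reduced by repeated integration by parts to the exponential integral $\Ei$, which gives the $\E_{k+1}$ expression. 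The two half-line integrals do not recombine into a single $\sum_{m\in\Z}$ because their prefactors are $(-1)^{k+1}k!$ and $-k!$, which only coincide for odd $k$.
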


	\begin{proof}
		Since $\psi^{(k)}(x) = \frac{d^{k+1}}{dx^{k+1}}\log(\Gamma(x))$, the integral can immediately be evaluated for $n = 0$. For $n = 0$ and $k > 0$ we additionally use the recurrence relation
		\[
		\psi^{(k)}(z+1) = \psi^{(k)}(z) + \frac{(-1)^{k}k!}{z^{k+1}} \quad (k > 0),
		\]
		see \cite[6.4.6]{abramowitz}, to obtain the stated formula.
		
		For $n \neq 0$ we plug in the expansions
		\[
		\psi(z) = -\gamma + \sum_{n = 0}^{\infty}\left(\frac{1}{n+1}-\frac{1}{n+z}\right), \quad \psi^{(k)}(z) = (-1)^{k+1}k!\sum_{n =0}^{\infty}\frac{1}{(z+n)^{k+1}} \quad (k > 0),
		\]
		see \cite[6.3.16, 6.4.10]{abramowitz}, to find
		\begin{align*}
		&\int_{iy}^{iy+1}\left(\psi^{(k)}(z)+(-1)^{k}\psi^{(k)}(1-z)\right)e^{2\pi i n z}dz \\
		&= (-1)^{k+1}k!\left(\int_{iy}^{iy+\infty}e^{2\pi i nz}\frac{dz}{z^{k+1}}+ (-1)^{k} \overline{\int_{iy}^{iy+\infty}e^{2\pi inz}\frac{dz}{z^{k+1}}}\right).
		\end{align*}
		The formula now follows by a repeated application of integration by parts and the evaluation
		\begin{align*}
		\int_{iy}^{iy+\infty}e^{2\pi i n z}\frac{dz}{z} = \begin{cases}
		-\Ei(-2\pi n y), & n > 0,\\
		-\Ei(-2\pi n y) - i\pi, & n < 0,
		\end{cases}
		\end{align*}
		see \cite[5.1.30/31]{abramowitz}. 
	\end{proof}

\section{Theta functions}\label{sec:thetafunctions}

We now define the theta functions that we use to construct the Millson and the Shintani theta lifts. For $\tau = u + iv,z = x+iy \in \mathbb{H}$ and $k \in \Z_{\geq 0}$ we consider the functions
\begin{align*}
\Theta_{M}(\tau,z) &= v^{k+1}\sum_{h \in L'/L}\sum_{X \in L+h}p_{z}(X)Q^{k}_{X}(\bar{z})e^{-2\pi vR(X,z)}e^{2\pi i Q(X)\tau}\e_{h}, \\
\Theta_{Sh}(\tau,z) &= v^{1/2}\sum_{h \in L'/L}\sum_{X \in L+h}y^{-2k-2}Q^{k+1}_{X}(\bar{z})e^{-2\pi v R(X,z)}e^{2\pi i Q(X)\tau}\e_{h},
\end{align*}
which we call the Millson and the Shintani theta function, respectively. They have been studied in many recent works, for example \cite{brfu04,brfu06, hoevel,bif,crawford}.

\begin{prop}\label{prop:propertiestheta}
	Let $k \in \Z$, $k \geq 0$.
	\begin{enumerate}
		\item The Millson theta function $\Theta_{M}(\tau,z)$ has weight $1/2-k$ in $\tau$ for the representation $\rho_{L}$ and $\overline{\Theta_{M}(\tau,z)}$ has weight $-2k$ in $z$ for $\Gamma$.
		\item The Shintani theta function $\overline{\Theta_{Sh}(\tau,z)}$ has weight $k+3/2$ in $\tau$ for the representation $\overline{\rho}_{L}$ and $\Theta_{Sh}(\tau,z)$ has weight $2k+2$ in $z$ for $\Gamma$.
	\end{enumerate}
\end{prop}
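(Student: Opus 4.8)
The plan is to treat the variables $z$ and $\tau$ entirely separately, since the two transformation laws have different sources: the behaviour in $z$ is an elementary substitution in the lattice sum, while the behaviour in $\tau$ is the standard Poisson-summation statement for Siegel-type theta kernels attached to harmonic polynomials.

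\emph{Transformation in $z$.} Fix $\gamma\in\Gamma$ and $z\in\H$. In $\Theta_{M}(\tau,\gamma z)$ I replace every occurrence of $\gamma z$ by means of the rules \eqref{zTransformationRules}, namely $p_{\gamma z}(X)=p_{z}(\gamma^{-1}X)$, $Q_{X}(\gamma\bar z)=j(\gamma,\bar z)^{-2}Q_{\gamma^{-1}X}(\bar z)$ (using $\overline{\gamma z}=\gamma\bar z$ for real $\gamma$), and $R(X,\gamma z)=R(\gamma^{-1}X,z)$; note that $Q(X)$, hence the factor $e^{2\pi iQ(X)\tau}$ and the prefactor $v^{k+1}$, is unchanged because $\gamma$ acts as an isometry of $V$. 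Then I re-index the sum by $X\mapsto\gamma X$, which is permissible precisely because $\Gamma$ preserves $L$ and acts trivially on $L'/L$, so that $X\mapsto\gamma X$ permutes each coset $L+h$. Collecting the automorphy factors gives $\Theta_{M}(\tau,\gamma z)=j(\gamma,\bar z)^{-2k}\Theta_{M}(\tau,z)$, and taking complex conjugates (with $\overline{j(\gamma,\bar z)}=j(\gamma,z)$) yields $\overline{\Theta_{M}(\tau,\gamma z)}=j(\gamma,z)^{-2k}\,\overline{\Theta_{M}(\tau,z)}$, i.e.\ weight $-2k$ in $z$. The Shintani case is the same computation, except that $(\Im\gamma z)^{-2k-2}=y^{-2k-2}|j(\gamma,z)|^{4k+4}=y^{-2k-2}j(\gamma,z)^{2k+2}j(\gamma,\bar z)^{2k+2}$, and the factor $j(\gamma,\bar z)^{2k+2}$ cancels against the $j(\gamma,\bar z)^{-2k-2}$ produced by $Q_{X}^{k+1}(\gamma\bar z)$, leaving $\Theta_{Sh}(\tau,\gamma z)=j(\gamma,z)^{2k+2}\Theta_{Sh}(\tau,z)$, i.e.\ weight $2k+2$ in $z$.

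\emph{Transformation in $\tau$.} Here I would recognise $\Theta_{M}$ and $\overline{\Theta_{Sh}}$ as instances of the general Siegel theta kernel $\theta_{L}(\tau,z;\varphi)$ associated to a Schwartz function $\varphi$ on $V(\R)$ of the shape $\varphi_{0}(X)=p(X)e^{-\pi(X,X)_{z}}$ at $v=1$ (with $(\cdot,\cdot)_{z}$ the majorant attached to $z$, and the full $\tau$-dependence coming from the Weil representation $\omega$ of $\Mp_{2}(\R)$ acting on $\varphi_{0}$), where $p$ is, for fixed $z$, a polynomial in the coordinates of $X$ relative to the orthogonal frame $X_{1}(z),X_{2}(z),X_{3}(z)$: for $\Theta_{M}$ one has (up to a $z$-dependent scalar) $p=p_{z}(X)\,\overline{Q_{X}(z)}^{k}$, homogeneous of degree $1$ in the positive variable and of degree $k$ in the negative variables, and for $\overline{\Theta_{Sh}}$ one has $p=Q_{X}(z)^{k+1}$, of bidegree $(0,k+1)$; in both cases $p$ is annihilated by the Laplacian of the indefinite form on $V(\R)$, so it sits in a single $K$-type for $\omega$ restricted to the maximal compact of $\Mp_{2}(\R)$. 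Granting this, the $T$-transformation is immediate from $Q(X)\equiv Q(h)\pmod{\Z}$ for $X\in L+h$, and the $S$-transformation is Poisson summation over $L$, using that the Fourier transform of a harmonic polynomial times a Gaussian reproduces it up to the expected power of $i$ and the volume factor $|L'/L|^{-1/2}$. This is the content of the general transformation theorem for such kernels (see \cite{borcherds,brfu04}, and \cite{alfesschwagen} for the Millson kernel in this normalisation), which produces the $\rho_{L}$-transformation of $\Theta_{M}$ of weight $\tfrac12-k$ and, after conjugation, the $\overline{\rho}_{L}$-transformation of $\overline{\Theta_{Sh}}$ of weight $k+\tfrac32$, the weights being forced by the signature $(1,2)$ and the bidegrees $(1,k)$ and $(0,k+1)$.

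\emph{The main obstacle} is purely bookkeeping. One must verify that the polynomial factors really are harmonic of the asserted bidegrees, so that no lower-order terms appear under the Fourier transform and the weight comes out exactly right; one must reconcile the normalising factors $v^{k+1}$, $v^{1/2}$ and the powers of $y$ with whatever normalisation of $\theta_{L}$ one cites (this is where the gap between $v^{1/2}$ and the ``natural'' $v$-power for the Shintani kernel gets absorbed, together with the $y^{-2k-2}$); and one must keep track of the complex conjugations, which is what explains why it is $\Theta_{M}$ but $\overline{\Theta_{Sh}}$ that carries the clean transformation — namely whether the distinguished harmonic polynomial on the two-dimensional negative part is holomorphic or antiholomorphic in $(X,X_{2}(z)+iX_{3}(z))$, and correspondingly whether the exponential reads $e^{2\pi iQ(X)\tau}$ or $e^{-2\pi iQ(X)\bar\tau}$. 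Since both kernels already occur in this normalisation in the works cited just before the proposition, I would write out the $z$-substitution in full and keep the $\tau$-part to a precise reference together with the short verification of harmonicity and the $K$-type.
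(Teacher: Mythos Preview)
Your proposal is correct and follows essentially the same approach as the paper. The paper's proof is a two-sentence sketch saying exactly what you say: the $z$-behaviour follows from the rules \eqref{zTransformationRules}, and the $\tau$-behaviour follows from Theorem~4.1 in \cite{borcherds} once one observes that the Millson and Shintani theta functions are the theta kernels attached to the polynomials $p_{z}(X)Q_{X}^{k}(\bar z)$ and $Q_{X}^{k+1}(\bar z)$, homogeneous of bidegree $(1,k)$ and $(0,k+1)$ respectively; your write-up simply unpacks both steps in more detail and is careful about the conjugations and normalising $v$- and $y$-powers, which the paper leaves implicit.
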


\begin{proof}

	The behaviour in $z$ easily follows from the rules (\ref{zTransformationRules}), and the behaviour in $\tau$ follows from \cite{borcherds}, Theorem 4.1, if we note that the Millson and the Shintani theta functions are essentially the theta functions associated to the polynomials $p_{z}(X)Q_{X}^{k}(\bar{z})$ and $Q_{X}^{k+1}(\bar{z})$, which are homogeneous of degree $(1,k)$ and $(0,k+1)$, respectively.
\end{proof}

We want to investigate the growth of the theta functions at the cusps of $\Gamma$. To describe this in a convenient way, we follow the ideas of \cite[Section 2.2]{bif} and define certain theta functions associated to the cusps. 

For an isotropic line $\ell \in \Iso(V)$ the space $W_{\ell} = \ell^{\perp}/\ell$ is a unary negative definite quadratic space with the quadratic form $Q(X + \ell) := Q(X)$, and
\[
K_{\ell} = (L\cap \ell^{\perp})/(L \cap \ell)
\]
is an even lattice with dual lattice
\[
K'_{\ell} = (L' \cap \ell^{\perp})/(L' \cap \ell).
\]
The vector $X_{\ell}=\sigma_{\ell}.X_{3}(i)$ is a basis of $W_{\ell}$ with $(X_{\ell},X_{\ell}) = -1$, and for $k \in \Z_{\geq 0}$ the polynomial $p_{\ell,k}(X) = (-\sqrt{2N}i(X,X_{\ell}))^{k}$ is homogeneous of degree $(0,k)$. We let $\Theta_{\ell,k}(\tau)$ be the theta function associated to $K_{\ell}$ and $p_{\ell,k}$ as in \cite{borcherds}, Section 4. By \cite[Theorem 4.1]{borcherds} the complex conjugate $\overline{\Theta_{\ell,k}(\tau)}$ is an almost holomorphic modular form of weight $k+1/2$ for the dual Weil representation of $K_{\ell}$. Using \cite[Lemma 5.6]{brhabil}, it gives rise to an almost holomorphic modular form of weight $k+1/2$ for the dual Weil representation $\overline{\rho}_{L}$ of $L$, which we also denote by $\overline{\Theta_{\ell,k}(\tau)}$. For $k = 0$ it is a holomorphic modular form, and for $k = 1$ it is a cusp form. 

Let us denote the Fourier coefficients at $q^{m}$ of the $h$-component of $\overline{\Theta_{\ell,k}(\tau)}$ by $b_{\ell,k}(m,h)$. Then a straightforward calculation shows that $b_{\ell,k}(0,h) = 0$ unless $\ell \cap (L+h)\neq \emptyset$, in which case we have
		\begin{align}\label{eq zero coefficients unary theta functions}
		b_{\ell,k}(0,h) = \frac{(-\sqrt{N}i)^{k}}{(4\pi v)^{k/2}}H_{k}\left(0\right),
		\end{align}
		and for $m > 0$ we have $b_{\ell,k}(m,h) = 0$ unless $m/N$ is a square and there exists a vector $X \in L_{-m,h}$ orthogonal to $\ell$, in which case we have
		\begin{align}\label{eq positive coefficients unary theta functions}
		b_{\ell,k}(m,h) = (\pm 1)^{k}\frac{(-\sqrt{N}i)^{k}}{(4\pi v)^{k/2}}H_{k}\left( 2\sqrt{\pi m v}\right)
		\end{align}
		if $h \neq -h \mod L$, and $1+(-1)^{k}$ times this expression if $h = -h \mod L$. Here $H_{k}(x) = (-1)^{k}e^{x^{2}}\frac{d^{k}}{dx^{k}}e^{-x^{2}}$ denotes the $k$-th Hermite polynomial, and the sign is $+1$ if $\ell = \ell_{X}$, and $-1$ if $\ell = \ell_{-X}$. Further, for $m < 0$ we have $b_{\ell,k}(m,h) = 0$.

\begin{prop}\label{prop:growththeta}
  Let $k\geq 0$ and let $\ell$ be a cusp of $\Gamma$ .
  \begin{enumerate}
  \item For the Millson theta function we have
   \begin{align*}
 j(\sigma_{\ell},\bar{z})^{2k}\Theta_{M}(\tau,\sigma_\ell z)&=
  -y^{k+1}\frac{k}{2\pi\beta_{\ell}}v^{k-1/2}\Theta_{\ell,k-1}(\tau) + O(e^{-Cy^{2}}),
  \end{align*}
 as $y \rightarrow\infty$, uniformly in $x$, for some constant $C>0$, where we set $\overline{\Theta_{\ell,-1}(\tau)} = 0$.
  \item For the Shintani theta function we have
  \begin{align*}
  j(\sigma_{\ell},z)^{-2k-2}\Theta_{Sh}(\tau,\sigma_{\ell}z) &= y^{-k}\frac{1}{\sqrt{N}\beta_{\ell}}\Theta_{\ell,k+1}(\tau) + O(e^{-Cy^{2}}),
 \end{align*}
 as $y \rightarrow\infty$, uniformly in $x$, for some constant $C>0$.
 \end{enumerate}
 Moreover, all of the partial derivates of the functions hidden in the $O$-notation are square exponentially decreasing as $y \to \infty$.
\end{prop}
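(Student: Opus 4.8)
The plan is to compute the growth of $\Theta_{M}(\tau,\sigma_\ell z)$ and $\Theta_{Sh}(\tau,\sigma_\ell z)$ as $y \to \infty$ by reducing, via the cusp-changing matrix $\sigma_\ell$, to the case of the cusp $\infty$, and then isolating the contribution of the isotropic sublattice. First I would use the transformation rules \eqref{zTransformationRules} to rewrite, for instance, $j(\sigma_\ell,\bar z)^{2k}\Theta_M(\tau,\sigma_\ell z)$ as a sum over $X \in L+h$ of $p_z(\sigma_\ell^{-1}X)\,Q^k_{\sigma_\ell^{-1}X}(\bar z)\,e^{-2\pi v R(\sigma_\ell^{-1}X,z)}e^{2\pi i Q(X)\tau}$ times $v^{k+1}$; replacing $X$ by $\sigma_\ell X$ turns this into the same theta series but for the lattice $\sigma_\ell^{-1}L$ and the isotropic line $\ell_\infty$. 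So it suffices to analyze the growth at $\ell_\infty$ for an arbitrary even lattice, which is exactly the setting of \cite[Section 2.2]{bif} and \cite{borcherds}, Section 5.

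Next I would split the lattice vectors $X$ according to their projection to $\ell_\infty^\perp/\ell_\infty$. Writing $X = X_0 + r X_\infty + \text{(component along $\ell_\infty$)}$ relative to the filtration $\ell_\infty \subset \ell_\infty^\perp \subset V$, the quantities $p_z(X)$, $Q_X(\bar z)$, $R(X,z)$ become explicit functions of $y$, $x$, $r$ and of $X_0 \in K_{\ell_\infty}+h$. The key point is that $R(X,z)$ is bounded below by a positive multiple of $y^2$ whenever the $X_\infty$-component of $X$ (equivalently the coefficient $a$ in classical language, or the value $\langle X, $ the vector dual to $\ell_\infty\rangle$) is nonzero, so all such terms are $O(e^{-Cy^2})$ together with all their partial derivatives; the surviving terms are those $X$ lying in $L \cap \ell_\infty^\perp$, which descend to $K_{\ell_\infty}$. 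For those, $p_z(X)$ is up to a power of $y$ essentially linear in $x$ and the Gaussian $e^{-2\pi v R(X,z)}$ is exactly the Gaussian entering $\Theta_{\ell,k}$, so after carrying out the sum over the $\ell$-direction (which contributes the factor $1/\beta_\ell$, since $\beta_\ell$ is the covolume of that rank-one piece) one recognizes $\Theta_{\ell,k-1}(\tau)$ in the Millson case and $\Theta_{\ell,k+1}(\tau)$ in the Shintani case. The powers of $y$ and $v$, and the constants $-k/(2\pi\beta_\ell)$ resp.\ $1/(\sqrt N\beta_\ell)$, come out of matching the polynomial $p_z(X)Q^k_X(\bar z)$ resp.\ $y^{-2k-2}Q^{k+1}_X(\bar z)$ against the defining polynomial $p_{\ell,k}$ of $\Theta_{\ell,k}$; in the Millson case the degree-one polynomial $p_z(X)$ on a vector of $W_\ell$ evaluates on the basis $X_\ell$ to give the Hermite-polynomial degree shift $k \mapsto k-1$, which is why $\Theta_{\ell,k-1}$ and not $\Theta_{\ell,k}$ appears (and why the answer is $0$ when $k=0$).

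For the statement about derivatives I would differentiate the theta series term by term in $x$ and $y$ (legitimate since everything converges locally uniformly): each $\partial_x$ or $\partial_y$ either brings down a polynomial factor or acts on the Gaussian $e^{-2\pi v R(X,z)}$, producing at most polynomial-in-$y$ growth times the same Gaussian, so the estimate $R(X,z) \gg y^2$ for $X \notin \ell_\infty^\perp$ still forces square-exponential decay of the error term after differentiation. The main obstacle is bookkeeping: one has to keep careful track of the action of $\sigma_\ell$ on the relevant bilinear forms, of the normalization constants $\alpha_\ell,\beta_\ell,\varepsilon_\ell$, and of the precise definition of $\Theta_{\ell,k}$ as promoted from $K_\ell$ to the full Weil representation $\rho_L$ via \cite[Lemma 5.6]{brhabil}, so that the constants and powers of $y$ match exactly; none of the individual steps is deep, but assembling them without sign or normalization errors is the delicate part, and this is presumably why the authors phrase the result as something verified by "a straightforward calculation" modeled on \cite{bif}.
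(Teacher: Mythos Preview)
Your approach is correct and essentially the same as the paper's: both first use the transformation rules \eqref{zTransformationRules} to replace $L$ by $\sigma_\ell^{-1}L$ and reduce to the cusp $\infty$, and then extract the main term via the Poisson-summation machinery of Borcherds, Section~5. The paper simply invokes \cite[Theorem~5.2]{borcherds} as a black box applied to $\sigma_\ell^{-1}L$ and the primitive isotropic vector $\left(\begin{smallmatrix}0 & \beta_\ell \\ 0 & 0\end{smallmatrix}\right)$, whereas you unpack that theorem by hand (splitting off the $x_3\neq 0$ terms, summing over the $\ell_\infty$-direction to produce the $1/\beta_\ell$ factor, and matching the remaining polynomial against $p_{\ell,k}$); the only caveat is that ``carrying out the sum over the $\ell$-direction'' really means applying Poisson summation there, which you should make explicit, since a naive termwise estimate on that rank-one piece does not give square-exponential decay of the remainder.
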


\begin{proof}
	Using the rules (\ref{zTransformationRules}) we see that $j(\sigma_{\ell},\bar{z})^{2k}\Theta_{M}(\tau,\sigma_\ell z)$ equals the Millson theta function associated to the lattice $\sigma_{\ell}^{-1}L$, and similarly for the Shintani theta function. Hence, we can equivalently estimate the growth of the theta functions for the lattice $\sigma_{\ell}^{-1}L$ at the cusp $\infty$. The result now follows from Theorem 5.2 in \cite{borcherds} applied to the lattice $\sigma_{\ell}^{-1}L$ and the primitive isotropic vector $\left(\begin{smallmatrix}0 & \beta_{\ell} \\ 0 & 0\end{smallmatrix}\right) \in  \ell_{\infty}\cap\sigma_{\ell}^{-1}L$.
\end{proof}

The theta functions we just defined satisfy some interesting differential equations. All of the following identities are well known and can be checked by a direct computation using the rules
\begin{align}\label{eq:diffpQR}
\frac{\partial}{\partial z} \left(y^{-2}Q_{X}(z)\right)=-i\sqrt{N}y^{-2}p_z(X),  \qquad \frac{\partial}{\partial z} p_{z}(X) =-\frac{i}{2\sqrt{N}}y^{-2}Q_{X}(\bar{z}).
\end{align}

\begin{lem}\label{lm:iddelta}
	For $k \geq 0,$ we have
\begin{align*}
 4\Delta_{1/2-k,\tau}\Theta_{M}(\tau,z) &= \overline{\Delta_{-2k,z}\overline{\Theta_{M}(\tau,z)}}, \\
 4\overline{\Delta_{k+3/2,\tau}\overline{\Theta_{Sh}(\tau,z)}} &= \Delta_{2k+2,z}\Theta_{Sh}(\tau,z).
\end{align*}
\end{lem}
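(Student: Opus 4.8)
The plan is to reduce the two identities to a single pointwise statement about one term of each theta series and then verify that statement by applying the Laplace operators directly, as anticipated in the paragraph preceding the lemma. First I would observe that the operators $\Delta_{\kappa,\tau}$ and $\Delta_{\kappa,z}$ act componentwise on $\C[L'/L]$, and that the real part of the exponent $-2\pi vR(X,z)+2\pi i Q(X)\tau$ occurring in each summand equals $-2\pi v\bigl(R(X,z)+Q(X)\bigr)$, which for fixed $z$ is a positive definite quadratic form in $X$. Hence every term of $\Theta_M$ and $\Theta_{Sh}$, and each of its partial derivatives in $\tau$ and $z$, is a polynomial in the coordinates of $X$ times a genuine Gaussian in $X$; on compact subsets of $\H\times\H$ these are dominated by a single convergent majorant over $L'=\bigcup_h(L+h)$, so the theta series converge absolutely and locally uniformly and may be differentiated term by term. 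It therefore suffices to prove, for every $X\in L'$ and all $\tau,z\in\H$, the pointwise identities
\begin{align*}
4\,\Delta_{1/2-k,\tau}\,\varphi_M(X,\tau,z) &= \overline{\Delta_{-2k,z}\,\overline{\varphi_M(X,\tau,z)}},\\
4\,\overline{\Delta_{k+3/2,\tau}\,\overline{\varphi_{Sh}(X,\tau,z)}} &= \Delta_{2k+2,z}\,\varphi_{Sh}(X,\tau,z),
\end{align*}
where $\varphi_M(X,\tau,z)=v^{k+1}p_z(X)Q_X^{k}(\bar z)e^{-2\pi vR(X,z)}e^{2\pi i Q(X)\tau}$ and $\varphi_{Sh}(X,\tau,z)=v^{1/2}y^{-2k-2}Q_X^{k+1}(\bar z)e^{-2\pi vR(X,z)}e^{2\pi i Q(X)\tau}$ are the summands of $\Theta_M$ and $\Theta_{Sh}$, and the operators $\Delta$ are now understood as ordinary differential operators in the two real variables in question.

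\textbf{The $\tau$-side.} Here $p_z(X)$, $Q_X(\bar z)$ and $R(X,z)$ are constants, and $Q(X)$ is a constant, so $\varphi_M=c(X,z)\,v^{k+1}e^{2\pi i Q(X)u}e^{-2\pi v(R(X,z)+Q(X))}$ with $c(X,z)$ independent of $\tau$, and likewise for $\varphi_{Sh}$ with $v^{k+1}$ replaced by $v^{1/2}$. Plugging this into $\Delta_{\kappa,\tau}=-v^2(\partial_u^2+\partial_v^2)+i\kappa v(\partial_u+i\partial_v)$ from \eqref{eq Laplace operator} is a routine one-variable calculation: $\partial_u$ only produces the factor $2\pi i Q(X)$, while $\partial_v$ acts on the power of $v$ and on the exponential. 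The outcome is $\varphi_M$ (resp. $\overline{\varphi_{Sh}}$) times an explicit polynomial in $v$, $v^{-1}$, $R(X,z)$ and $Q(X)$, which I would record for use in the next step.

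\textbf{The $z$-side.} Writing for brevity $p=p_z(X)$ (which is real), $Q^{+}=Q_X(z)$, $Q^{-}=Q_X(\bar z)=\overline{Q^{+}}$ and $R=R(X,z)=\tfrac{1}{2Ny^2}Q^{+}Q^{-}$, one has $\overline{\varphi_M}=v^{k+1}p\,(Q^{+})^{k}e^{-2\pi vR}e^{-2\pi i Q(X)\bar\tau}$ and $\Delta_{-2k,z}=-4y^2\partial_z\partial_{\bar z}-4iky\partial_{\bar z}$. From \eqref{eq:diffpQR}, its complex conjugate, and the relations $\partial_z Q^{-}=0=\partial_{\bar z}Q^{+}$, one obtains the first and second $z$- and $\bar z$-derivatives of $p$, $Q^{\pm}$ and $R$; feeding these into the product rule for the second-order operator $\Delta_{-2k,z}$ applied to $\overline{\varphi_M}$ produces a sum of cross terms. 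Collecting them — and using $R=\tfrac{1}{2}p^2-(X,X)$ together with $(X,X)=2Q(X)$ to replace $p^2$ by $2R+4Q(X)$ — should reproduce exactly $4$ times the complex conjugate of the function computed on the $\tau$-side, which is the first identity. The second identity follows from the same computation with the polynomial $p\,(Q^{+})^{k}$ replaced by $(Q^{+})^{k+1}$, the factor $y^{-2k-2}$ inserted, $v^{k+1}$ replaced by $v^{1/2}$, and the weights in the Laplacians changed accordingly.

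\textbf{Main obstacle.} The only real difficulty is the bookkeeping in the last step: $\Delta_{-2k,z}$ is second order and $\overline{\varphi_M}$ is a product of several $z$-dependent factors, so the product rule generates on the order of a dozen terms, and one must organize them (by the shorthand above and by grouping according to the power of $y$) to see the cancellations and reach the closed form matching the $\tau$-side. Conceptually this is the familiar statement that, on a Siegel-type theta kernel, $\Delta_{\kappa,\tau}$ and $\Delta_{\kappa,z}$ both act through Casimir operators on the underlying fixed Schwartz function (a polynomial times a Gaussian) on $V(\R)$, so the identity is ultimately a single finite-dimensional check; but I would carry it out by the direct route flagged before the statement.
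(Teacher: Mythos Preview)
Your proposal is correct and follows exactly the route the paper indicates: a direct term-by-term computation using the differentiation rules \eqref{eq:diffpQR}, with the justification for differentiating under the sum that the paper leaves implicit. The paper's own proof is in fact just the one-line remark that the identities ``can be checked by a direct computation using the rules \eqref{eq:diffpQR},'' so your write-up is a fuller version of the same argument rather than an alternative approach.
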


The Millson and the Shintani theta functions are related by the following identities.

\begin{lem}\label{lm:relshinmillson}
	For $k \geq 0$ we have
\begin{align*}
\xi_{1/2-k,\tau} \Theta_{M}(\tau,z) &= \frac{1}{2\sqrt{N}}\xi_{2k+2,z}\Theta_{Sh}(\tau,z), \\
\xi_{3/2+k,\tau} \overline{\Theta_{Sh}(\tau,z)}&=\frac{\sqrt{N}}{2} \xi_{-2k,z}\overline{\Theta_{M}(\tau,z)}.
\end{align*}
\end{lem}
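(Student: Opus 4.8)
The plan is to verify both identities termwise on the defining theta series, reducing each to the differentiation rules \eqref{eq:diffpQR}. Both sides of each identity are absolutely and locally uniformly convergent $\C[L'/L]$-valued sums over $h \in L'/L$ and $X \in L+h$, the operators $\xi_{\bullet,\tau}$ and $\xi_{\bullet,z}$ act componentwise on the basis vectors $\e_h$ and termwise on the sums, and the summand attached to a given $X$ carries the same $\e_h$ on the two sides. Hence it suffices to fix $X \in V$, set $m := Q(X)$, and prove the identity between the two individual $X$-summands.

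For the first identity I would begin on the $\tau$-side. The $X$-summand of $\Theta_M(\tau,z)$ is $p_z(X)Q_X^k(\bar z)\,g_X(\tau)\,\e_h$ with $g_X(\tau) = v^{k+1}e^{-2\pi v R(X,z)}e^{2\pi i m\tau}$, the polynomial factor being $\tau$-free. Since $\xi_{1/2-k,\tau} = 2iv^{1/2-k}\,\overline{\partial_{\bar\tau}(\,\cdot\,)}$, and since $p_z(X) \in \R$ and $\overline{Q_X(\bar z)} = Q_X(z)$, everything reduces to computing $\partial_{\bar\tau}g_X$. Here the holomorphic exponential $e^{2\pi i m\tau}$ is annihilated by $\partial_{\bar\tau}$, so only $v^{k+1}e^{-2\pi v R(X,z)}$ is differentiated, and with $\partial_{\bar\tau}v = i/2$ one gets
\[
\partial_{\bar\tau}g_X = \tfrac{i}{2}\Big(\tfrac{k+1}{v} - 2\pi R(X,z)\Big)g_X .
\]
Substituting back, the $X$-summand of $\xi_{1/2-k,\tau}\Theta_M(\tau,z)$ becomes $v^{1/2}\,p_z(X)Q_X^k(z)\big(k+1 - 2\pi v R(X,z)\big)e^{-2\pi v R(X,z)}e^{-2\pi i m\bar\tau}\,\e_h$.

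For the $z$-side I would write the $X$-summand of $\Theta_{Sh}(\tau,z)$ as $\big(y^{-2}Q_X(\bar z)\big)^{k+1}e^{-2\pi v R(X,z)}\cdot v^{1/2}e^{2\pi i m\tau}\,\e_h$, the last factor being $z$-free, and apply $\xi_{2k+2,z} = 2iy^{2k+2}\,\overline{\partial_{\bar z}(\,\cdot\,)}$. Conjugating \eqref{eq:diffpQR} gives $\partial_{\bar z}\big(y^{-2}Q_X(\bar z)\big) = i\sqrt{N}\,y^{-2}p_z(X)$ and $\partial_{\bar z}p_z(X) = \tfrac{i}{2\sqrt{N}}\,y^{-2}Q_X(z)$, whence $\partial_{\bar z}R(X,z) = p_z(X)\,\partial_{\bar z}p_z(X)$. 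Differentiating $\big(y^{-2}Q_X(\bar z)\big)^{k+1}e^{-2\pi v R(X,z)}$ by the product and chain rules, using $y^{-2}Q_X(z)Q_X(\bar z) = 2N R(X,z)$ to fuse the two resulting terms, then conjugating and multiplying by $2iy^{2k+2}$, I expect to land on exactly $2\sqrt{N}$ times the $\tau$-side expression above, which is the first identity. The second identity I would prove by the mirror-image termwise computation, with $\tau$ and $z$ interchanged: now the polynomial $Q_X^{k+1}(\bar z)$ of $\Theta_{Sh}$ is $\tau$-free and untouched by $\xi_{3/2+k,\tau}$, while $\partial_{\bar z}$ acts on the polynomial $p_z(X)Q_X^k(\bar z)$ of $\Theta_M$ via \eqref{eq:diffpQR}; this time the terms linear in $m$ survive on both sides --- arising from $\partial_{\bar\tau}e^{-2\pi i m\bar\tau}$ on the one side and from $p_z(X)^2 = 2R(X,z) + 2(X,X)$ with $(X,X) = 2Q(X)$ on the other --- and they match, the overall constant coming out to $\tfrac{\sqrt{N}}{2}$.

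I do not anticipate a real obstacle, consistent with the paper's remark that these identities are well known; the only thing demanding care is the bookkeeping of the two complex conjugations (one from each $\xi$-operator) against the reality of $p_z(X)$ and $R(X,z)$ and the relation $\overline{Q_X(z)} = Q_X(\bar z)$. The single conceptual point worth flagging is why the $\tau$- and $z$-derivatives of the two kernels are proportional at all: for the first identity it is that $\partial_{\bar\tau}$ kills $e^{2\pi i m\tau}$, while on the $z$-side the extra $Q_X$ produced by $\partial_{\bar z}$ merges with the $Q_X(\bar z)$ already present into $|Q_X|^2 \propto R(X,z)$, so that no exponent linear in $\tau$ or $z$ survives on either side.
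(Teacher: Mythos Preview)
Your proposal is correct and follows exactly the approach the paper indicates: a direct termwise computation using the differentiation rules \eqref{eq:diffpQR}, together with the relation $y^{-2}|Q_X(z)|^2 = 2NR(X,z)$ (for the first identity) and $p_z(X)^2 = 2R(X,z) + 4Q(X)$ (for the second). The paper itself gives no further details, and your bookkeeping of the conjugations and constants is accurate.
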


\section{The Shintani and the Millson theta lift}\label{section theta lift}

If $G \in S_{2k+2}(\Gamma)$ is a cusp form, the usual Shintani theta lift of $G$ is given by \begin{align*}
\ISh(G,\tau) &= \int_{\calF(\Gamma)}G(z)\overline{\Theta_{Sh}(\tau,z)}y^{2k+2}d\mu(z)\\
&= \sum_{\ell \in \Gamma \backslash \Iso(V)}\int_{\calF^{\alpha_{\ell}}}(G|_{2k+2}\sigma_{\ell})(z)\overline{(\Theta_{Sh}|_{2k+2}\sigma_{\ell})(\tau,z)}y^{2k+2}d\mu(z).
\end{align*}
For $G \in H_{2k+2}(\Gamma)$ the integrals do usually not converge. Following ideas from \cite{bif}, we make the following definition.

\begin{dfn} We define the regularized Shintani theta lift of a harmonic Maass form $G \in H_{2k+2}(\Gamma)$ by
\[
\ISh(G,\tau) = \CT_{s = 0}\left[\ISh(G,\tau,s)\right],
\]
where
\[
\ISh(G,\tau,s) = \sum_{\ell \in \Gamma \backslash \Iso(V)}\lim_{T \to \infty}\int_{\calF_{T}^{\alpha_{\ell}}}(G|_{2k+2}\sigma_{\ell})(z)\overline{(\Theta_{Sh}|_{2k+2}\sigma_{\ell})(\tau,z)}y^{2k+2-s}d\mu(z),
\]
and $\CT_{s = 0}\left[\ISh(G,\tau,s)\right]$ denotes the constant term in the Laurent expansion at $s=0$ of the analytic continuation of $\ISh(G,\tau,s)$.
\end{dfn}

\begin{prop}\label{proposition convergence shintani lift}
	For $G \in H_{2k+2}(\Gamma)$ the function
	\[
	\ISh(G,\tau,s) - \frac{1}{\sqrt{N}}\sum_{\ell \in \Gamma \backslash \Iso(V)}\varepsilon_{\ell}\overline{\Theta_{\ell,k+1}(\tau)}\left(\frac{a_{\ell}^{+}(0)}{s-k-1} + \frac{a_{\ell}^{-}(0)}{s+k} \right)
	\]
	converges locally uniformly for $\Re(s) > k+1$ and has a holomorphic continuation to $\C$, which is smooth in $\tau$. In particular, the regularized Shintani lift $\ISh(G,\tau)$ is a smooth function transforming like a modular form of weight $k+3/2$ for $\bar{\rho}_{L}$. Further, for $k > 0$ we have the formula
	\begin{align*}
	\ISh(G,\tau)	&=\lim_{T \to \infty} \bigg[\int_{\calF(\Gamma)_{T}}G(z)\overline{\Theta_{Sh}(\tau,z)}y^{2k+2}d\mu(z) \\
	&\qquad \qquad  - \frac{1}{\sqrt{N}}\sum_{\ell \in \Gamma \backslash \Iso(V)}\varepsilon_{\ell}\overline{\Theta_{\ell,k+1}(\tau)}\left(a_{\ell}^{+}(0)\frac{T^{k+1}}{k+1}-a_{\ell}^{-}(0)\frac{T^{-k}}{k} \right)\bigg],
	\end{align*}
	and for $k = 0$ the same formula holds with $\frac{T^{-k}}{k}$ replaced by $-\log(T)$.
\end{prop}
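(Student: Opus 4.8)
The plan is to analyze the $\tau$-integral cusp by cusp, separating in each copy $\calF_T^{\alpha_\ell}$ the contribution of the constant Fourier coefficients $a_\ell^{\pm}(0)$ of $G$ from the rest. First I would use the rules \eqref{zTransformationRules} to rewrite each summand $\int_{\calF_T^{\alpha_\ell}}(G|_{2k+2}\sigma_\ell)(z)\,\overline{(\Theta_{Sh}|_{2k+2}\sigma_\ell)(\tau,z)}\,y^{2k+2-s}\,d\mu(z)$ as an integral over $\calF_T^{\alpha_\ell}$ against the Shintani theta function of the lattice $\sigma_\ell^{-1}L$ at the cusp $\infty$. The region $\calF_T^{\alpha_\ell}$ splits naturally into a compact part (below some fixed height $y_0$) and a rectangular neck $\{0\le x<\alpha_\ell,\ y_0\le y\le T\}$. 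On the compact part the theta function is smooth and of moderate growth, so the integral converges for all $s$ and contributes a piece that is holomorphic in $s$ and smooth in $\tau$; nothing needs to be subtracted there.

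The heart of the argument is the neck. In $\{y_0\le y\le T\}$ I would insert the asymptotic expansion of the Shintani theta function from Proposition~\ref{prop:growththeta}(2): up to an $O(e^{-Cy^2})$ error (with all partial derivatives also square-exponentially small), $j(\sigma_\ell,z)^{-2k-2}\Theta_{Sh}(\tau,\sigma_\ell z)=y^{-k}\frac{1}{\sqrt N\beta_\ell}\Theta_{\ell,k+1}(\tau)+O(e^{-Cy^2})$. The error term, paired with the at-most-linear-exponential growth of $G$, gives an absolutely convergent contribution, holomorphic in $s$ and smooth in $\tau$, after integrating $x$ over a period and $y$ up to $T$ and letting $T\to\infty$. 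For the main term I would plug in the Fourier expansion \eqref{eq Fourier expansion harmonic Maass form} of $G|_{2k+2}\sigma_\ell$; integrating $e(nx)$ over the period $0\le x<\alpha_\ell$ kills all $n\neq 0$ and leaves exactly the two pieces coming from $a_\ell^+(0)$ and $a_\ell^-(0)$. Concretely the $y$-integral becomes, after collecting the powers of $y$ from $y^{2k+2-s}$, the measure $y^{-2}\,dy$, the factor $y^{-k}$ from the theta asymptotic, and the growth exponents $1$ resp.\ $y^{1-(2k+2)}=y^{-2k-1}$ of $G^{+}$ resp.\ $G^{-}$:
\begin{align*}
\int_{y_0}^{T}y^{k-s}\,dy=\frac{T^{k+1-s}-y_0^{k+1-s}}{k+1-s},\qquad
\int_{y_0}^{T}y^{-k-1-s}\,dy=\frac{y_0^{-k-s}-T^{-k-s}}{k+s},
\end{align*}
(with the usual modification $\int y^{-1-s}dy$ for $k=0$). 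Multiplying by $\varepsilon_\ell/\sqrt N$ from $\alpha_\ell\cdot\frac{1}{\sqrt N\beta_\ell}$ and by $\overline{\Theta_{\ell,k+1}(\tau)}$, and noting that the integral $\int_{\calF_T^{\alpha_\ell}}$ appears together with the remaining $a_\ell^{\pm}(0)$-independent terms, one sees that the only obstruction to a finite $T\to\infty$ limit is the $T^{k+1-s}$ term (a genuine divergence) and that the $s$-integral of the neck contribution is meromorphic with simple poles only at $s=k+1$ (from $a_\ell^+(0)$) and $s=-k$ (from $a_\ell^-(0)$), with residues $\varepsilon_\ell\overline{\Theta_{\ell,k+1}(\tau)}\,a_\ell^+(0)/\sqrt N$ and $-\varepsilon_\ell\overline{\Theta_{\ell,k+1}(\tau)}\,a_\ell^-(0)/\sqrt N$ respectively. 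Summing over $\ell$ gives precisely the subtracted term in the statement, proving the claimed holomorphic continuation and smoothness in $\tau$; the modular transformation of weight $k+3/2$ for $\overline{\rho}_L$ is then inherited from that of $\overline{\Theta_{Sh}(\tau,z)}$ (Proposition~\ref{prop:propertiestheta}) since the subtracted counterterm, built from the $\overline{\Theta_{\ell,k+1}}$, is itself modular of the same weight.

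For the last formula I would take $\CT_{s=0}$ of the analytically continued expression. Away from $s=0$ — which for $k\ge 1$ is distinct from both $k+1$ and $-k$ — one may move the limit inside and recover $\int_{\calF(\Gamma)_T}G\,\overline{\Theta_{Sh}}\,y^{2k+2}\,d\mu$; the divergent $T^{k+1}$ and (for $k>0$) $T^{-k}$ pieces of the neck integral are exactly $\frac{1}{\sqrt N}\sum_\ell \varepsilon_\ell\overline{\Theta_{\ell,k+1}(\tau)}\big(a_\ell^+(0)\frac{T^{k+1}}{k+1}-a_\ell^-(0)\frac{T^{-k}}{k}\big)$ by the elementary $y$-integrals above, so subtracting them yields a convergent $T\to\infty$ limit whose value is independent of the regularization, giving the displayed identity (with $-\log T$ in place of $T^{-k}/k$ when $k=0$, where the pole at $s=-k$ collides with $s=0$ and one reads off the constant term of $\frac{-\log T+\text{const}}{s}$-type expansions). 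The main obstacle is bookkeeping the interplay of the three expansions — the $s$-regulator, the height-truncation $T$, and the Fourier expansions at each cusp — so that the poles at $s=k+1$, $s=-k$ and the divergences in $T$ are matched up cleanly; in particular the case $k=0$, where $s=-k$ and $s=0$ coincide, must be handled separately, and one must check that the $\alpha_\ell$-fold union in $\calF_T^{\alpha_\ell}$ (which produces the width factor turning $\beta_\ell^{-1}$ into $\varepsilon_\ell$) is accounted for correctly.
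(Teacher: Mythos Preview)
Your proposal is correct and follows essentially the same approach as the paper: at each cusp use the asymptotic from Proposition~\ref{prop:growththeta}(2) to separate off a square-exponentially decaying error, integrate out the $x$-variable against the $x$-independent main term to isolate the constant coefficients $a_\ell^\pm(0)$, and evaluate the remaining elementary $y$-integrals to exhibit the simple poles at $s=k+1$ and $s=-k$. The paper differs only cosmetically (it subtracts the asymptotic over all of $\calF_T^{\alpha_\ell}$ before splitting into compact part and neck, whereas you split the region first), and note a harmless sign slip in your stated residue at $s=-k$: from $\int_{y_0}^\infty y^{-k-1-s}\,dy = y_0^{-k-s}/(k+s)$ the residue is $+a_\ell^-(0)$, matching the subtracted term $a_\ell^-(0)/(s+k)$.
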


\begin{proof}
	We first write
	\begin{align*}
	&\ISh(G,\tau,s) \\
	&= \sum_{\ell \in \Gamma \backslash \Iso(V)}\lim_{T \to \infty}\int_{\calF_{T}^{\alpha_{\ell}}}(G|_{2k+2}\sigma_{\ell})(z)\overline{\left((\Theta_{Sh}|_{2k+2}\sigma_{\ell})(\tau,z)-\frac{y^{-k}}{\sqrt{N}\beta_{\ell}}\Theta_{\ell,k+1}(\tau)\right)}y^{2k+2-s}d\mu(z) \\
	& \qquad + \sum_{\ell \in \Gamma \backslash \Iso(V)}\frac{1}{\sqrt{N}\beta_{\ell}}\overline{\Theta_{\ell,k+1}(\tau)}\lim_{T \to \infty}\int_{\calF_{T}^{\alpha_{\ell}}}(G|_{2k+2}\sigma_{\ell})(z)y^{k+2-s}d\mu(z).
	\end{align*}
	According to the growth estimates for the Shintani theta function, the first integral defines a holomorphic function for all $s\in \C$. In the second line, we write
	\begin{align*}
	\lim_{T \to \infty}\int_{\calF_{T}^{\alpha_{\ell}}}(G|_{2k+2}\sigma_{\ell})(z)y^{k+2-s}d\mu(z) &= \int_{\calF_{1}^{\alpha_{\ell}}}(G|_{2k+2}\sigma_{\ell})(z)y^{k+2-s}d\mu(z) \\
	&\quad + \int_{1}^{\infty}\int_{0}^{\alpha_{\ell}}(G|_{2k+2}\sigma_{\ell})(z)y^{k-s} dx\,dy.
	\end{align*}
	The integral over the compact domain $\mathcal{F}_{1}^{\alpha_{\ell}}$ on the right-hand side is holomorphic for all $s \in \C$. The integral over $x$ in the second line picks out the zero coefficient of $G|_{2k+2}\sigma_{\ell}$, so we obtain for $\Re(s) > k+1$
	\begin{align*}
	\int_{1}^{\infty}\int_{0}^{\alpha_{\ell}}(G|_{2k+2}\sigma_{\ell})(z)y^{k-s} dx\,dy &= \alpha_{\ell}\int_{1}^{\infty}\left(a_{\ell}^{+}(0) + a_{\ell}^{-}(0)y^{1-(2k+2)} \right) y^{k-s} dy \\
	&= \alpha_{\ell}\left(\frac{a_{\ell}^{+}(0)}{s-k-1} + \frac{a_{\ell}^{-}(0)}{s+k} \right).
	\end{align*}
	This shows that the regularized integral exists. We have just proved that the summand corresponding to the cusp $\ell$ in $\ISh(G,\tau)$ is given by
	\begin{align*}
	&\lim_{T \to \infty}\int_{\calF_{T}^{\alpha_{\ell}}}(G|_{2k+2}\sigma_{\ell})(z)\overline{\left((\Theta_{Sh}|_{2k+2}\sigma_{\ell})(\tau,z)-\frac{y^{-k}}{\sqrt{N}\beta_{\ell}}\Theta_{\ell,k+1}(\tau)\right)}y^{2k+2}d\mu(z) \\
	&\quad+\frac{1}{\sqrt{N}\beta_{\ell}}\overline{\Theta_{\ell,k+1}(\tau)}\left(\int_{\calF_{1}^{\alpha_{\ell}}}(G|_{2k+2}\sigma_{\ell})(z)y^{k+2}d\mu(z) + \alpha_{\ell}\CT_{s = 0}\left(\frac{a_{\ell}^{+}(0)}{s-k-1} + \frac{a_{\ell}^{-}(0)}{s+k} \right)\right).
	\end{align*}
	For $k > 0$ we have
	\begin{align*}
\CT_{s = 0}\left(\frac{a_{\ell}^{+}(0)}{s-k-1} + \frac{a_{\ell}^{-}(0)}{s+k} \right)&= \lim_{T \to\infty}\left(a_{\ell}^{+}(0)\left(\int_{1}^{T}y^{k}dy-\frac{T^{k+1}}{k+1}\right)\right. \\
	&\qquad \qquad \qquad + \left. a_{\ell}^{-}(0)\left( \int_{1}^{T}y^{-k-1}dy+\frac{T^{-k}}{k}\right) \right),
	\end{align*}
	and for $k = 0$ the same formula holds with $\frac{T^{-k}}{k}$ replaced by $-\log(T)$. This easily implies the stated formula.
\end{proof}

Next, we show that the Shintani theta lift of a harmonic Maass form $G \in H_{2k+2}(\Gamma)$ is related to the Millson theta lift of the weakly holomorphic modular form $\xi_{2k+2}G$ by the $\xi$-operator. In \cite{alfesschwagen} the Millson theta lift $\IM(F,\tau)$ was defined for $F \in H_{-2k}^{+}(\Gamma)$, and it was shown that $\IM(F,\tau)$ defines a harmonic Maass form in $H_{1/2-k,\rho_{L}}^{+}$ which is related to the Shintani lift of the cusp form $\xi_{-2k}F$ by the $\xi$-operator. We generalize the Millson lift to the full space $H_{-2k}(\Gamma)$.

\begin{dfn} We define the regularized Millson theta lift of a harmonic Maass form $F \in H_{-2k}(\Gamma)$ by
\[
\IM(F,\tau) = \CT_{s = 0}\left[\IM(F,\tau,s)\right],
\]
where
\[
\IM(F,\tau,s) = \sum_{\ell \in \Gamma \backslash \Iso(V)}\lim_{T \to \infty}\int_{\calF_{T}^{\alpha_{\ell}}}(F|_{-2k}\sigma_{\ell})(z)\overline{(\overline{\Theta_{M}}|_{-2k}\sigma_{\ell})(\tau,z)}y^{-2k-s}d\mu(z).
\]
\end{dfn}

By similar arguments as in the proof of Proposition \ref{proposition convergence shintani lift} we obtain the following result.

\begin{prop}\label{proposition convergence millson lift}
 
	For $F \in H_{-2k}(\Gamma)$ the function
	\[
	\IM(F,\tau,s) + \frac{k}{2\pi}\sum_{\ell \in \Gamma \backslash \Iso(V)}\varepsilon_{\ell}v^{k-1/2}\Theta_{\ell,k-1}(\tau)\left(\frac{a_{\ell}^{+}(0)}{s+k} + \frac{a_{\ell}^{-}(0)}{s-k-1} \right)
	\]
	converges locally uniformly for $\Re(s) > k+1$ and has a holomorphic continuation to $\C$, which is smooth in $\tau$. In particular, the regularized Millson lift $\IM(F,\tau)$ is a smooth function transforming like a modular form of weight $1/2-k$ for $\rho_{L}$. Further, for $k > 0$ we have the formula
	\begin{align*}
	\IM(F,\tau)	&=\lim_{T \to \infty} \bigg[\int_{\calF(\Gamma)_{T}}F(z)\Theta_{M}(\tau,z)y^{-2k}d\mu(z) \\
	&\qquad \qquad  - \frac{k}{2\pi}\sum_{\ell \in \Gamma \backslash \Iso(V)}\varepsilon_{\ell}v^{k-1/2}\Theta_{\ell,k-1}(\tau)\left(a_{\ell}^{+}(0)\frac{T^{-k}}{k}-a_{\ell}^{-}(0)\frac{T^{k+1}}{k+1} \right)\bigg],
	\end{align*}
	and for $k = 0$ the analogous formula holds with the second line omitted.
\end{prop}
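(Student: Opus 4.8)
The plan is to mimic the proof of Proposition~\ref{proposition convergence shintani lift} line by line, using part~(1) of Proposition~\ref{prop:growththeta} in place of part~(2). First, for each cusp $\ell$, I would split off the leading growth term of the Millson theta function. Applying $\sigma_{\ell}$ and conjugating, the rules (\ref{zTransformationRules}) give $\overline{(\overline{\Theta_{M}}|_{-2k}\sigma_{\ell})(\tau,z)} = j(\sigma_{\ell},\bar z)^{2k}\Theta_{M}(\tau,\sigma_{\ell}z)$, which by Proposition~\ref{prop:growththeta}(1) equals $-y^{k+1}\tfrac{k}{2\pi\beta_{\ell}}v^{k-1/2}\Theta_{\ell,k-1}(\tau)$ up to a term which, together with all of its partial derivatives, is $O(e^{-Cy^{2}})$ as $y\to\infty$. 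Splitting $\IM(F,\tau,s)$ according to this decomposition, the contribution of the exponentially decaying part is an integral of $F$ --- which has at most linear exponential growth at the cusps --- against a kernel decaying superexponentially in every cusp neighbourhood, so it converges absolutely for every $s\in\C$ and defines an entire function of $s$, smooth in $\tau$ (one may differentiate under the integral sign).

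The remaining piece is, for each cusp $\ell$, the constant $-\tfrac{k}{2\pi\beta_{\ell}}v^{k-1/2}\Theta_{\ell,k-1}(\tau)$ times $\lim_{T\to\infty}\int_{\calF_{T}^{\alpha_{\ell}}}(F|_{-2k}\sigma_{\ell})(z)\,y^{-k-1-s}\,dx\,dy$, the power of $y$ arising from $y^{k+1}\cdot y^{-2k-s}$ against the $y^{-2}$ of $d\mu(z)$. I would write this as a holomorphic integral over the compact piece $\calF_{1}^{\alpha_{\ell}}$ plus an integral over $y\geq 1$; integrating over $x\in[0,\alpha_{\ell}]$ picks out the constant Fourier coefficient $a_{\ell}^{+}(0)+a_{\ell}^{-}(0)y^{1+2k}$ of $F|_{-2k}\sigma_{\ell}$ (the weight here is $-2k$, so $y^{1-\kappa}=y^{1+2k}$), and the resulting tail integral evaluates, for $\Re(s)>k+1$, to $\alpha_{\ell}\big(\tfrac{a_{\ell}^{+}(0)}{s+k}+\tfrac{a_{\ell}^{-}(0)}{s-k-1}\big)$, which continues meromorphically to $\C$. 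Multiplying by $-\tfrac{k}{2\pi\beta_{\ell}}v^{k-1/2}\Theta_{\ell,k-1}(\tau)$, summing over $\ell$ and using $\varepsilon_{\ell}=\alpha_{\ell}/\beta_{\ell}$, the singular part of $\IM(F,\tau,s)$ is seen to be exactly $-\tfrac{k}{2\pi}\sum_{\ell}\varepsilon_{\ell}v^{k-1/2}\Theta_{\ell,k-1}(\tau)\big(\tfrac{a_{\ell}^{+}(0)}{s+k}+\tfrac{a_{\ell}^{-}(0)}{s-k-1}\big)$; subtracting it yields the asserted holomorphic continuation, with smoothness in $\tau$ inherited from the pieces above. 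The transformation law of $\IM(F,\tau)$ under $\rho_{L}$ then follows from that of $\Theta_{M}$ in Proposition~\ref{prop:propertiestheta}(1), together with the observation that each $v^{k-1/2}\Theta_{\ell,k-1}(\tau)$ transforms with weight $1/2-k$ for $\rho_{L}$ (since $\overline{\Theta_{\ell,k-1}}$ has weight $k-1/2$ for $\overline{\rho}_{L}$ by Section~\ref{sec:thetafunctions}, and multiplication by $v^{k-1/2}$ lowers the weight by $2k-1$) and the fact that $\CT_{s=0}$ commutes with the $\Mp_{2}(\Z)$-action.

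For $k>0$ the two candidate poles of the singular part sit at $s=-k$ and $s=k+1$, so there is no pole at $s=0$ and $\CT_{s=0}$ is simply evaluation at $s=0$; unwinding the truncated integrals via $\int_{1}^{T}y^{-k-1}\,dy=\tfrac1k-\tfrac{T^{-k}}{k}$ and $\int_{1}^{T}y^{k}\,dy=\tfrac{T^{k+1}}{k+1}-\tfrac1{k+1}$ identifies the $T$-dependent terms that must be subtracted to obtain a convergent limit as $\tfrac{k}{2\pi}\sum_{\ell}\varepsilon_{\ell}v^{k-1/2}\Theta_{\ell,k-1}(\tau)\big(a_{\ell}^{+}(0)\tfrac{T^{-k}}{k}-a_{\ell}^{-}(0)\tfrac{T^{k+1}}{k+1}\big)$, which gives the stated formula. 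When $k=0$ the explicit factor $k$ --- equivalently $\Theta_{\ell,-1}=0$ --- kills the growth term, so $\Theta_{M}$ already decays superexponentially at every cusp, $\IM(F,\tau,s)$ is entire in $s$, and $\IM(F,\tau)=\int_{\calF(\Gamma)}F(z)\Theta_{M}(\tau,z)\,d\mu(z)=\lim_{T\to\infty}\int_{\calF(\Gamma)_{T}}F(z)\Theta_{M}(\tau,z)\,d\mu(z)$, i.e.\ the formula with the correction terms omitted. I expect the only genuinely delicate point to be the bookkeeping: because $\overline{\Theta_{M}}$ carries $z$-weight $-2k$ rather than $+2k+2$, the roles of $a^{+}(0)$ and $a^{-}(0)$, and of the poles at $s=-k$ versus $s=k+1$, are interchanged relative to the Shintani case, but there is no new analytic obstacle beyond the one already handled in Proposition~\ref{proposition convergence shintani lift}.
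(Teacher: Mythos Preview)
Your proposal is correct and is precisely the approach the paper intends: the paper's ``proof'' of Proposition~\ref{proposition convergence millson lift} is the single sentence ``By similar arguments as in the proof of Proposition~\ref{proposition convergence shintani lift} we obtain the following result'', and you have carried out exactly that parallel, including the correct identification of the swapped roles of $a_{\ell}^{+}(0)$ and $a_{\ell}^{-}(0)$ and of the poles at $s=-k$ versus $s=k+1$.
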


\begin{prop}\label{proposition xi diagram}
	For $G \in H_{2k+2}(\Gamma)$ we have
	\[
	\xi_{3/2+k}\ISh(G,\tau) = -\frac{\sqrt{N}}{2}\IM(\xi_{2k+2}G,\tau),
	\]
	and for $F \in H_{-2k}(\Gamma)$ we have
	\[
\xi_{1/2-k}\IM(F,\tau) = -\frac{1}{2\sqrt{N}}\ISh(\xi_{-2k}F,\tau) + \begin{cases}
\frac{1}{2N}\sum_{\ell \in \Gamma \backslash \Iso(V)}\varepsilon_{\ell}\overline{a_{\ell}^{+}(0)\Theta_{\ell,1}(\tau)}, & \text{if }k = 0, \\
0, & \text{if } k > 0.
\end{cases}
\]

\end{prop}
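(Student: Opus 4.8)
The plan is to reduce each identity to the corresponding identity for the theta \emph{functions} from Lemma~\ref{lm:relshinmillson}, namely $\xi_{1/2-k,\tau}\Theta_{M}(\tau,z) = \frac{1}{2\sqrt{N}}\xi_{2k+2,z}\Theta_{Sh}(\tau,z)$ and $\xi_{3/2+k,\tau}\overline{\Theta_{Sh}(\tau,z)} = \frac{\sqrt{N}}{2}\xi_{-2k,z}\overline{\Theta_{M}(\tau,z)}$, combined with Stokes' theorem on the truncated fundamental domain $\calF(\Gamma)_{T}$. First I would work with the regularized formulas from Proposition~\ref{proposition convergence shintani lift} and Proposition~\ref{proposition convergence millson lift}. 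Applying $\xi_{3/2+k,\tau}$ to the integral $\int_{\calF(\Gamma)_{T}}G(z)\overline{\Theta_{Sh}(\tau,z)}y^{2k+2}d\mu(z)$ and moving the operator inside, one uses $\xi_{3/2+k,\tau}\overline{\Theta_{Sh}(\tau,z)} = \frac{\sqrt{N}}{2}\xi_{-2k,z}\overline{\Theta_{M}(\tau,z)}$ to rewrite the integrand in terms of $\xi_{-2k,z}\overline{\Theta_{M}}$. Then one wants to integrate by parts in $z$: since $\xi_{-2k,z}$ is (up to conjugation) $2iy^{-2k}\overline{\partial_{\bar z}}$, the integral $\int_{\calF(\Gamma)_{T}}G(z)\overline{\xi_{-2k,z}\overline{\Theta_{M}(\tau,z)}}\,y^{2k+2}d\mu(z)$ can be compared, via the product rule, with $\int_{\calF(\Gamma)_{T}}(\xi_{2k+2,z}G)(z)\Theta_{M}(\tau,z)y^{-2k}d\mu(z)$ up to an exact differential whose integral over $\calF(\Gamma)_{T}$ reduces by Stokes to a boundary term along the horocycle $y = T$ (the vertical sides cancel by $\Gamma$-invariance).

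The main technical work is the analysis of this boundary term as $T\to\infty$, together with the contribution of the explicit truncation counterterms $\tfrac{1}{\sqrt N}\sum_{\ell}\varepsilon_{\ell}\overline{\Theta_{\ell,k+1}(\tau)}(a_{\ell}^{+}(0)\tfrac{T^{k+1}}{k+1} - a_{\ell}^{-}(0)\tfrac{T^{-k}}{k})$ appearing in Proposition~\ref{proposition convergence shintani lift}, which also depend on $\tau$ and hence contribute when $\xi_{3/2+k,\tau}$ is applied. One expands $G|_{2k+2}\sigma_{\ell}$ in its Fourier series~\eqref{eq Fourier expansion harmonic Maass form}, uses the growth of $\Theta_{Sh}$ from Proposition~\ref{prop:growththeta} (only the zero Fourier mode in $z$ survives the $x$-integration up to square-exponentially small error), and checks that the polynomially growing pieces in $T$ produced by the boundary integral are exactly matched by $\xi_{3/2+k,\tau}$ applied to the counterterms. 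Here the relation $\xi_{3/2+k,\tau}$ applied to $v^{a}\overline{\Theta_{\ell,k+1}(\tau)}$ must be computed using $\overline{\Theta_{\ell,k+1}}$ being almost holomorphic of weight $k+1/2$ for $\overline{\rho}_{L}$ (indeed a cusp form for $k\geq 0$, by the remark after its definition, so its $\xi$-image is controlled), and one must track the powers of $v$ carefully. After this cancellation one is left precisely with the regularized Millson integral of $\xi_{2k+2}G$ in the shape of Proposition~\ref{proposition convergence millson lift}, giving the first identity; the constant $-\tfrac{\sqrt N}{2}$ comes out of the factor in Lemma~\ref{lm:relshinmillson} and the $2i$ in the definition of $\xi$.

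For the second identity the strategy is entirely parallel, starting from $\xi_{1/2-k,\tau}\IM(F,\tau)$, using $\xi_{1/2-k,\tau}\Theta_{M}(\tau,z) = \tfrac{1}{2\sqrt N}\xi_{2k+2,z}\Theta_{Sh}(\tau,z)$ and Stokes' theorem in $z$. The difference is the extra term for $k = 0$: in that case the unary theta function $\Theta_{\ell,k-1} = \Theta_{\ell,-1}$ is set to $0$, so the Millson counterterms in Proposition~\ref{proposition convergence millson lift} degenerate, and moreover for $k=0$ the relevant boundary integral along $y=T$ does not vanish in the limit because $\Theta_{M}$ has a genuine constant term of size $O(1)$ rather than $O(1/y)$ as $y\to\infty$ (for $k>0$ the leading term of $\Theta_{M}$ carries an extra power $v^{k-1/2}y^{k+1}$ which is handled by the counterterm, whereas the genuinely nondecaying piece occurs only at $k=0$). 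Tracking this surviving horocycle integral, and identifying it via~\eqref{eq zero coefficients unary theta functions} with $\tfrac{1}{2N}\sum_{\ell}\varepsilon_{\ell}\overline{a_{\ell}^{+}(0)\Theta_{\ell,1}(\tau)}$, produces the stated correction term. I expect the hard part to be precisely this bookkeeping at $k=0$: keeping straight which boundary contributions cancel against which $\tau$-dependent counterterms, getting all the constants ($\sqrt N$, factors of $2$, $\varepsilon_{\ell} = \alpha_{\ell}/\beta_{\ell}$, the normalization of $\Theta_{\ell,k}$) right, and justifying the interchange of $\xi_{\kappa,\tau}$ with the regularized ($\CT_{s=0}\lim_{T\to\infty}$) integral, which requires the locally uniform convergence and smoothness in $\tau$ already granted by Propositions~\ref{proposition convergence shintani lift} and~\ref{proposition convergence millson lift}.
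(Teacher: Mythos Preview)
Your approach matches the paper's proof: differentiate the regularized formula from Proposition~\ref{proposition convergence shintani lift} in $\tau$, use Lemma~\ref{lm:relshinmillson} to convert the $\tau$-derivative into a $z$-derivative of the other theta function, apply Stokes' theorem on $\calF(\Gamma)_T$, and check that the horocycle boundary integral combines with the $\xi$-image of the truncation counterterms to produce exactly the counterterms of the target lift. The paper makes the cancellation explicit via Proposition~\ref{prop:growththeta} and the identity $\xi_{3/2+k}\overline{\Theta_{\ell,k+1}(\tau)} = -\tfrac{Nk(k+1)}{4\pi}v^{k-1/2}\Theta_{\ell,k-1}(\tau)$ for $k>0$ (and holomorphicity of $\overline{\Theta_{\ell,1}}$ for $k=0$).

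One point where your diagnosis goes astray: in the second identity, after Stokes the boundary integral involves $\overline{F}\,\overline{\Theta_{Sh}}$, not $\Theta_M$. Your claim that ``$\Theta_{M}$ has a genuine constant term of size $O(1)$'' for $k=0$ is the opposite of the truth: by Proposition~\ref{prop:growththeta}(1) the Millson theta function is square-exponentially small at the cusps when $k=0$ (the leading term carries a factor $k$). The extra term instead comes from the $\overline{a_\ell^{+}(0)}$-piece of the boundary integral against the $y^{-k}\Theta_{\ell,k+1}$ asymptotic of $\Theta_{Sh}$: this piece is of order $T^{-k}$, so for $k=0$ it survives as a genuine constant, while for $k>0$ it is absorbed by the $\xi_{1/2-k}$-image of the Millson counterterms (which exist precisely for $k>0$). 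Two minor slips: $\overline{\Theta_{\ell,k+1}}$ has weight $k+3/2$, not $k+1/2$; and it is a cusp form only for $k=0$, merely almost holomorphic for $k>0$.
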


\begin{proof}
	We only prove the first formula since the second one is very similar. For $k > 0$ we have
	\begin{align*}
	\xi_{3/2+k}\ISh(G,\tau)&=\lim_{T \to \infty} \bigg[\frac{\sqrt{N}}{2}\int_{\calF(\Gamma)_{T}}\overline{G(z)}\xi_{-2k}\overline{\Theta_{M}(\tau,z)}y^{2k+2}d\mu(z) \\
	&\qquad - \frac{1}{\sqrt{N}}\sum_{\ell \in \Gamma \backslash \Iso(V)}\varepsilon_{\ell}\left(\xi_{3/2+k}\overline{\Theta_{\ell,k+1}(\tau)}\right)\left(\overline{a_{\ell}^{+}(0)}\frac{T^{k+1}}{k+1}-\overline{a_{\ell}^{-}(0)}\frac{T^{-k}}{k} \right)\bigg].
	\end{align*}
	Note that for $k = 0$ the function $\overline{\Theta_{\ell,1}(\tau)}$ is a cusp form, so in this case the above formula also holds, but the second line vanishes. Using Stokes' theorem we find
	\begin{align*}
	\xi_{3/2+k}\ISh(G,\tau)&=\lim_{T \to \infty} \bigg[-\frac{\sqrt{N}}{2}\int_{\calF(\Gamma)_{T}}\xi_{2k+2}G(z)\Theta_{M}(\tau,z)y^{-2k}d\mu(z) \\
	&\qquad   - \frac{\sqrt{N}}{2}\int_{\partial \calF(\Gamma)_{T}}\overline{G(z)}\Theta_{M}(\tau,z)d\bar{z} \\
	&\qquad   - \frac{1}{\sqrt{N}}\sum_{\ell \in \Gamma \backslash \Iso(V)}\varepsilon_{\ell}\left(\xi_{3/2+k}\overline{\Theta_{\ell,k+1}(\tau)}\right)\left(\overline{a_{\ell}^{+}(0)}\frac{T^{k+1}}{k+1}-\overline{a_{\ell}^{-}(0)}\frac{T^{-k}}{k} \right)\bigg],
	\end{align*}
	where we again understand that the last line vanishes if $k =0$. By the growth estimate for the Millson theta function we find
	\begin{align*}
	&\int_{\partial \calF(\Gamma)_{T}}\overline{G(z)}\Theta_{M}(\tau,z)d\bar{z}\\
	&= \frac{k}{2\pi}\sum_{\ell \in \Gamma \backslash \Iso(V)}\varepsilon_{\ell}v^{k-1/2}\overline{\Theta_{\ell,k-1}(\tau)}\left(\overline{a_{\ell}^{+}(0)}T^{k+1} + \overline{a_{\ell}^{-}(0)}T^{-k}\right) + O(e^{-\varepsilon T^{2}})
	\end{align*}
	as $T \to \infty$ for some $\varepsilon > 0$. For $k = 0$ we use that $\overline{\Theta_{\ell,-1}(\tau)} = 0$ by definition, and for $k > 0$ we see from the Fourier expansion of $\overline{\Theta_{\ell,k}(\tau)}$ that
	\[
	\xi_{3/2+k}\overline{\Theta_{\ell,k+1}(\tau)} = -\frac{Nk(k+1)}{4\pi}v^{k-1/2}\Theta_{\ell,k-1}(\tau).
	\]
	Thus the boundary integral over $\partial \calF(\Gamma)_{T}$ cancels out with the sum over the theta series as $T \to \infty$, and we obtain the stated formula.
\end{proof}

\begin{prop}\label{proposition Delta}
	For $G \in H_{2k+2}(\Gamma)$ we have
	\begin{align*}
	4\Delta_{3/2+k}\ISh(G,\tau) = \begin{cases}
	-\frac{1}{\sqrt{N}}\sum_{\ell \in \Gamma \backslash \Iso(V)}\varepsilon_{\ell}a_{\ell}^{-}(0)\overline{\Theta_{\ell,1}(\tau)}, & \text{if } k = 0, \\
	0, & \text{if } k > 0,
	\end{cases}
	\end{align*}
	and for $F \in H_{-2k}(\Gamma)$ we have
	\begin{align*}
	\Delta_{1/2-k}\IM(F,\tau) = 0.
	\end{align*}
	In particular, the Shintani and the Millson theta lifts are real analytic on $\H$.
\end{prop}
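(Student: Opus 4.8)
The plan is to read off both statements directly from Proposition~\ref{proposition xi diagram} together with the factorization $\Delta_{\kappa}=-\xi_{2-\kappa}\xi_{\kappa}$, so that no theta integral has to be revisited. For the Shintani lift I would start from $\Delta_{3/2+k}\ISh(G,\tau)=-\xi_{1/2-k}\xi_{3/2+k}\ISh(G,\tau)$, use the first differential equation of Proposition~\ref{proposition xi diagram} to replace $\xi_{3/2+k}\ISh(G,\tau)$ by $-\tfrac{\sqrt N}{2}\IM(\xi_{2k+2}G,\tau)$, and then apply the second differential equation of Proposition~\ref{proposition xi diagram} with $F=\xi_{2k+2}G\in M_{-2k}^{!}(\Gamma)\subseteq H_{-2k}(\Gamma)$. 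This gives
\[
\Delta_{3/2+k}\ISh(G,\tau)=-\tfrac{1}{4}\ISh(\xi_{-2k}\xi_{2k+2}G,\tau)+\begin{cases}\tfrac{1}{4\sqrt N}\sum_{\ell\in\Gamma\backslash\Iso(V)}\varepsilon_{\ell}\,\overline{a^{+}_{\xi_{2}G,\ell}(0)}\;\overline{\Theta_{\ell,1}(\tau)},&k=0,\\ 0,&k>0.\end{cases}
\]

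The decisive simplification is that $\xi_{-2k}\xi_{2k+2}G=-\Delta_{2k+2}G=0$ because $G$ is a harmonic Maass form, so the first term vanishes identically; for $k>0$ this already yields $\Delta_{3/2+k}\ISh(G,\tau)=0$, hence $4\Delta_{3/2+k}\ISh(G,\tau)=0$. For $k=0$ I would finish by reading off the constant coefficient of $\xi_{2}G$ at the cusp $\ell$ from the action of $\xi$ on Fourier expansions: $a^{+}_{\xi_{2}G,\ell}(0)=(1-2)\overline{a^{-}_{\ell}(0)}=-\overline{a^{-}_{\ell}(0)}$, so $\overline{a^{+}_{\xi_{2}G,\ell}(0)}=-a^{-}_{\ell}(0)$. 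Multiplying through by $4$ then produces exactly $4\Delta_{3/2}\ISh(G,\tau)=-\tfrac{1}{\sqrt N}\sum_{\ell}\varepsilon_{\ell}a^{-}_{\ell}(0)\overline{\Theta_{\ell,1}(\tau)}$, as claimed.

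The Millson statement is handled the same way: I would write $\Delta_{1/2-k}\IM(F,\tau)=-\xi_{3/2+k}\xi_{1/2-k}\IM(F,\tau)$, rewrite $\xi_{1/2-k}\IM(F,\tau)$ by the second equation of Proposition~\ref{proposition xi diagram}, and then apply $\xi_{3/2+k}$ using the first equation. The only term that can survive is proportional to $\IM(\xi_{2k+2}\xi_{-2k}F,\tau)$, and this vanishes since $\xi_{2k+2}\xi_{-2k}F=-\Delta_{-2k}F=0$ by harmonicity of $F$; when $k=0$ the additional term $\tfrac{1}{2N}\sum_{\ell}\varepsilon_{\ell}\overline{a^{+}_{\ell}(0)\Theta_{\ell,1}(\tau)}$ appearing in Proposition~\ref{proposition xi diagram} is a constant linear combination of the holomorphic weight $3/2$ cusp forms $\overline{\Theta_{\ell,1}(\tau)}$ and is therefore annihilated by $\xi_{3/2}$. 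Hence $\Delta_{1/2-k}\IM(F,\tau)=0$ in all cases. Finally, the real-analyticity assertion follows from elliptic regularity: each of $\ISh(G,\tau)$ and $\IM(F,\tau)$ is sent by $\Delta_{\kappa}$ to a real-analytic modular form, and $\Delta_{\kappa}$ is elliptic with real-analytic coefficients, so any solution of $\Delta_{\kappa}u=(\text{real analytic})$ is itself real analytic on $\H$.

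I expect the only genuine bookkeeping to be the $k=0$ constant-chasing, that is, keeping track of the two factors of $\sqrt N$ and the signs generated by the two successive applications of Proposition~\ref{proposition xi diagram}, the explicit factor $4$ in the statement, and the conjugation relating $\Theta_{\ell,1}$ to $\overline{\Theta_{\ell,1}}$ together with the $\xi$-action on the constant term. There is no analytic difficulty at this point, since all of the hard analysis---convergence of the regularized lifts, the growth of the theta kernels at the cusps, and the invocations of Stokes' theorem---has already been carried out in Propositions~\ref{proposition convergence shintani lift}, \ref{proposition convergence millson lift}, and~\ref{proposition xi diagram}.
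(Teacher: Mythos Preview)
Your proof is correct. The paper's own argument for the Shintani statement is different: it either invokes a direct Stokes' theorem computation in the style of \cite[Section~5.4.1]{bif} or, alternatively, reads the result off from the explicit Fourier expansion in Theorem~\ref{theorem fourier expansion shintani} (a forward reference to the main technical computation of the paper). For the Millson statement the paper does essentially what you do, deducing it from the Shintani result via $\Delta_{\kappa}=-\xi_{2-\kappa}\xi_{\kappa}$ and Proposition~\ref{proposition xi diagram}. Your approach is more self-contained: you handle \emph{both} statements purely by applying Proposition~\ref{proposition xi diagram} twice and using harmonicity of the input, so you avoid both the Stokes' theorem rerun and the forward reference to the Fourier expansion. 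The only cost is that you must check a priori that the extra boundary term $\overline{\Theta_{\ell,1}(\tau)}$ is killed by $\xi_{3/2}$, which you correctly do using that it is a holomorphic cusp form of weight $3/2$. The $k=0$ constant-chasing (signs, the two factors of $\sqrt{N}$, and $\overline{a^{+}_{\xi_{2}G,\ell}(0)}=-a^{-}_{\ell}(0)$) is carried out correctly.
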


\begin{proof}
	This can be proved using Stokes' theorem in a similar way as in Section 5.4.1 of \cite{bif}. We will omit the proof for brevity, since the result for the Shintani lift also follows by applying the Laplace operator to the Fourier expansion given below, and the result for the Millson lift easily follows from the one for the Shintani lift if we write $\Delta_{k} = -\xi_{2-k}\xi_{k}$ and use Proposition \ref{proposition xi diagram}.
\end{proof}

\section{The Fourier expansion of the Shintani lift}

	In this section we state the Fourier expansion of the Shintani lift of a harmonic Maass form $G \in H_{2k+2}(\Gamma)$, which is the main result of this work. To this end, we first have to introduce some more notation.
	
	For a $\Gamma$-invariant function $F: \H \to \C$ and $ m \in \Q_{ > 0}, h \in L'/L,$ with $m \in \Z + Q(h)$ we define the trace function
	\begin{align*}
	\tr^{+}(F,m,h) &= \sum_{X \in \Gamma \backslash L_{m,h}^{+}}\frac{F(z_{X})}{|\overline{\Gamma}_{X}|},
	\end{align*}
	where $L_{m,h}^{+}$ consists of the vectors $X = \left( \begin{smallmatrix}x_{2} & x_{1} \\ x_{3} & -x_{2} \end{smallmatrix}\right) \in  L+h$ with $Q(X) = m$ and $x_{3} > 0$, $z_{X}$ is the CM point associated to $X$, i.e., the unique root of $Q_{X}(z)$ in $\H$, and $\overline{\Gamma}_{X}$ is the stabilizer of $X$ in $\overline{\Gamma} = \Gamma/\{\pm 1\}$.
	
	For a harmonic Maass form $G \in H_{2k+2}(\Gamma)$ and $m \in \Q_{ < 0}, h \in L'/L,$ with $m \in \Z + Q(h)$ we define the trace function
	\begin{align*}
	\tr(G,m,h) &= \sum_{X \in \Gamma \backslash L_{m,h}}\int_{c(X)}^{\reg}G(z)Q_{X}^{k}(z)dz,
	\end{align*}
	with the regularized cycle integral defined in Section \ref{section regularized cycle integrals}.
	
	For a harmonic Maass form $F \in H_{-2k}^{+}(\Gamma)$ and $m \in \Q_{< 0}$ with $|m|/N$ being a square, say $m = -Nd^{2}$ for some $d \in \Q$, we define the complementary trace function
	\begin{align*}
	\tr^{c}(F,-Nd^{2},h) &= \sum_{X \in \Gamma \backslash L_{-Nd^{2},h}}\sum_{\substack{n \in \frac{1}{\alpha_{\ell_{X}}}\Z \\ n < 0}}a_{F,\ell_{X}}^{+}(n)(4\pi n)^{k}e^{2\pi i r_{\ell_{X}}n},
	\end{align*}
	where $\ell_{X}$ is the cusp associated to $X$, $r_{\ell_{X}}$ is the real part of the geodesic $c(X)$, and $a_{F,\ell_{X}}^{+}(w)$ are the coefficients of the holomorphic part of $F|_{-2k}\sigma_{\ell_{X}}$, see Section \ref{section regularized cycle integrals}.

	We require the special functions 
	\[
	\beta_{3/2+k}(v) = \int_{1}^{\infty}e^{-vt}t^{-3/2-k}dt, \qquad \beta^{c}_{3/2+k}(v) = \int_{0}^{1}e^{-vt}t^{-3/2-k}dt, \qquad (v > 0),
	\]
	where $\beta_{3/2+k}^{c}(v)$ is defined by the integral above only for $k \leq -1$, and can be analytically continued in $k$ similarly as the Gamma function. 
	
	Further, for a function $f(h)$ depending on $h \in L'/L$ we let $f(\pm h) = f(h) + (-1)^{k+1}f(-h)$.

	\begin{thm}\label{theorem fourier expansion shintani}
	Let $k \in \Z$ with $k > 0$ and let $G \in H_{2k+2}(\Gamma)$. If the constant coefficients $a_{\ell}^{+}(0)$ and $a_{\ell}^{-}(0)$ of $G$ at all cusps vanish, then the $h$-th component of $\ISh(G,\tau)$ is given by
	\begin{align*}
	\ISh_{h}(G,\tau)=& (-1)^{k}\sqrt{N}\sum_{m > 0}\tr(G,-m,h)q^{m} \\
	&-\sum_{m < 0}\pi\left( \frac{-\sqrt{N}}{4\pi\sqrt{|m|}}\right)^{k+1}\overline{\tr^{+}(R_{-2k}^{k}\xi_{2k+2}G,-m,\pm h)}v^{-k-1/2}\beta_{3/2+k}(4\pi |m|v)q^{m} \\
	&-\sum_{d > 0}\pi\left(\frac{i}{4\pi d}\right)^{k+1}\overline{\tr^{c}(\xi_{2k+2} G,-Nd^{2},\pm h)}v^{-k-1/2}\beta^{c}_{3/2+k}(-4\pi Nd^{2}v)q^{Nd^{2}}.
	\end{align*}
	If the constant coefficients of $G$ do not vanish, we have the additional contributions	
	\begin{align*}
	&+\frac{N^{k+1/2}}{2}\sum_{\substack{\ell \in \Gamma \backslash \Iso(V) \\ \ell \cap (L + h) \neq \emptyset}}\alpha_{\ell}\beta_{\ell}^{k}a_{\ell}^{+}(0)\big((-1)^{k+1}\zeta(s,\kappa_{\ell}/\beta_{\ell})+\zeta(s,1-\kappa_{\ell}/\beta_{\ell})\big)\big|_{s = -k} \\
	&+ \frac{k!}{2\pi^{k+1}}v^{-k-1/2}\sum_{\substack{\ell \in \Gamma \backslash \Iso(V) \\ \ell \cap (L + h) \neq \emptyset}}\frac{\alpha_{\ell}}{\beta_{\ell}^{k+1}}a_{\ell}^{-}(0)\big((-1)^{k+1}\zeta(s,\kappa_{\ell}/\beta_{\ell})+\zeta(s,1-\kappa_{\ell}/\beta_{\ell})\big)\big|_{s = k+1},
	\end{align*}
	where $\zeta(s,\rho) = \sum_{n = 0, n+ \rho \neq 0}^{\infty}(n+\rho)^{-s}$ is the Hurwitz zeta function and $\kappa_{\ell} \in \Q$ with $0 \leq \kappa_{\ell} < \beta_{\ell}$ is defined by $\sigma_{\ell}^{-1}h_{\ell} = \left(\begin{smallmatrix}0 & \kappa_{\ell} \\ 0 & 0 \end{smallmatrix}\right)$ for some $h_{\ell} \in \ell \cap (L+h)$. 
	
	For $k = 0$ the $h$-component is given by the same formula as above, but with the additional non-holomorphic terms
	\begin{align*}
	\frac{1}{\sqrt{N}}\sum_{\ell \in \Gamma \backslash \Iso(V)}\varepsilon_{\ell}a_{\ell}^{-}(0)\sum_{m > 0}b_{\ell,1}(m,h)\calF\left(4\pi mv\right)q^{m},
	\end{align*}
	where
	\begin{align*}
	\calF(w) &= \frac{\sqrt{\pi}}{2}w^{-1/2}e^{w}\erfc\left(\sqrt{w}\right)-\sqrt{\pi}\int_{0}^{\sqrt{w}}e^{t^{2}}\erfc(t)dt + \frac{1}{2}\log(w)+\log(2)-\frac{1}{2}\Gamma'(1).
	\end{align*}
	\end{thm}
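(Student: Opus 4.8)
The plan is to compute the holomorphic part of $\ISh(G,\tau)$ directly by unfolding and a double integration by parts, and to recover the non-holomorphic part from the relation to the Millson lift. Starting from the truncated formula in Proposition~\ref{proposition convergence shintani lift}, I would first unfold the Shintani theta kernel at each cusp: for $m \in \Z + Q(h)$ with $m \neq 0$ the $m$-th Fourier coefficient of $\ISh(G,\tau)$ collapses, orbit by orbit, to a sum over $X \in \Gamma\backslash L_{m,h}$ of the integrals $\int_{\Gamma_X\backslash\H} G(z)\,\overline{\varphi_{Sh}^{0}(X,\tau,z)}\,y^{2k+2}\,d\mu(z)$, where $\varphi_{Sh}^{0}$ is the Shintani Schwartz function; the $m = 0$ part of the kernel, together with the $\CT_{s=0}$-regularization and the unary theta contributions $\Theta_{\ell,k+1}$ subtracted at the cusps in Proposition~\ref{proposition convergence shintani lift}, contributes the constant term and, when the constant coefficients of $G$ do not vanish, the Hurwitz zeta expressions.

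The technical heart is to substitute the $\Delta_{2k+2}$-preimage $\eta(X,\tau,z)$ of $\varphi_{Sh}^{0}(X,\tau,z)$ and to apply the Green identity attached to $\Delta_{2k+2} = -\xi_{-2k}\xi_{2k+2}$ together with the harmonicity $\Delta_{2k+2}G = 0$. This turns each two-dimensional integral into the boundary expression $\int_{\partial} G(z)\,\xi_{2k+2}\eta(X,\tau,z)\,dz - \overline{\int_{\partial}\xi_{2k+2}G(z)\,\eta(X,\tau,z)\,dz}$ over a truncated fundamental domain for $\Gamma_X\backslash\H$. Because $\eta(X,\tau,z)$ is singular at the CM point $z_X$ when $Q(X) > 0$ and $\xi_{2k+2}\eta(X,\tau,z)$ has a jump discontinuity across the geodesic $c_X$ when $Q(X) < 0$, one excises $\varepsilon$-neighbourhoods of these loci before invoking Stokes and then lets $\varepsilon \to 0$. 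The limit produces boundary contributions of three types: (i) along the truncation horocycles at the cusps, which after reconciliation with the $s$-regularization furnish the complementary traces $\tr^{c}$, the Hurwitz zeta values, and the boundary correction terms inside the regularized cycle integrals; (ii) for $Q(X) < 0$, from the jump of $\xi_{2k+2}\eta$ across $c_X$ --- whose size involves $Q_X^k(z)$ since the $\erfc$ factor equals $1$ on $c_X$ --- which, combined with (i), assembles exactly into $\int_{c(X)}^{\reg}G(z)Q_X^k(z)\,dz$, hence the trace $\tr(G,m,h)$; and (iii) for $Q(X) > 0$, from a small loop around $z_X$, where the Laurent expansion of $\eta$ in $z - z_X$ has leading order $(z-z_X)^{-k-1}$, so the residue extracts a combination of the derivatives $\partial_z^j\xi_{2k+2}G(z_X)$ for $0 \leq j \leq k$ which I would identify, up to the explicit constant $\pi\left(\frac{-\sqrt{N}}{4\pi\sqrt{|m|}}\right)^{k+1}$, with the value $(R_{-2k}^{k}\xi_{2k+2}G)(z_X)$ of the iterated Maass raising operator.

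Summing over orbits then yields the holomorphic part $(-1)^k\sqrt{N}\sum_{m > 0}\tr(G,-m,h)q^m$ together with the $\tr^{+}$- and $\tr^{c}$-pieces of the non-holomorphic part. For $k > 0$ the remaining non-holomorphic coefficients are pinned down by the differential equation $\xi_{3/2+k}\ISh(G,\tau) = -\tfrac{\sqrt{N}}{2}\IM(\xi_{2k+2}G,\tau)$ of Proposition~\ref{proposition xi diagram}, using the Fourier expansion of the Millson lift of the weakly holomorphic form $\xi_{2k+2}G$ from \cite{alfesschwagen} (whose holomorphic part is the generating series of the traces $\tr^{+}(R_{-2k}^{k}\xi_{2k+2}G,\cdot)$ and the complementary traces $\tr^{c}$) and the fact that $\xi_{3/2+k}$ determines the non-holomorphic part of a harmonic Maass form; this produces the $\beta_{3/2+k}$ and $\beta^{c}_{3/2+k}$ terms. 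For $k = 0$ the lift is only sesquiharmonic, so one additionally solves the inhomogeneous equation $4\Delta_{3/2}\ISh(G,\tau) = -\tfrac{1}{\sqrt{N}}\sum_{\ell}\varepsilon_\ell a_\ell^{-}(0)\overline{\Theta_{\ell,1}(\tau)}$ of Proposition~\ref{proposition Delta}; solving the resulting first-order equation in $v$ for each Fourier mode forces the extra non-holomorphic term built from the function $\calF$.

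The main obstacle will be the bookkeeping at the cusps: one must show that the $\CT_{s=0}$-regularization, the subtracted unary theta functions, the horocycle boundary integrals of both $G\,\xi_{2k+2}\eta$ and $\xi_{2k+2}G\,\eta$, and the contributions of the infinite square-discriminant geodesics combine into exactly the quantities $\tr^{c}$, $\beta^{c}_{3/2+k}$ and the Hurwitz zeta values of the statement; and, on the CM side, one must verify the combinatorial identity that the iterated residue of $\eta$ around $z_X$ equals $(R_{-2k}^{k}\xi_{2k+2}G)(z_X)$ with the precise constant above. The numerous case distinctions --- the sign of $m$, whether $|m|/4N$ is a square, $k = 0$ versus $k > 0$, and whether the constant coefficients of $G$ vanish --- are what make the complete computation lengthy.
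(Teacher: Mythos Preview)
Your plan is essentially the paper's own proof: introduce the $\Delta_{2k+2}$-preimage $\eta(X,\tau,z)$, apply Stokes on a truncated fundamental domain with $\varepsilon$-neighbourhoods of CM points and geodesics removed, extract $R_{-2k}^{k}\xi_{2k+2}G(z_X)$ from the residue of $\eta$ at $z_X$ when $Q(X)>0$, pick up the (truncated) cycle integral from the jump of $\xi_{2k+2}\eta$ across $c_X$ when $Q(X)<0$, and read off the Hurwitz zeta contributions from the $m=0$ boundary term. This is exactly right.

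The one genuine difference is how you propose to get the $v$-dependence of the non-holomorphic coefficients. You plan to obtain the functions $\beta_{3/2+k}$, $\beta^{c}_{3/2+k}$ and $\calF$ from the relations $\xi_{3/2+k}\ISh(G,\tau)=-\tfrac{\sqrt{N}}{2}\IM(\xi_{2k+2}G,\tau)$ and $4\Delta_{3/2}\ISh(G,\tau)=-\tfrac{1}{\sqrt{N}}\sum_\ell\varepsilon_\ell a_\ell^{-}(0)\overline{\Theta_{\ell,1}(\tau)}$, together with the known Millson Fourier expansion. This works (Proposition~\ref{proposition xi diagram} is proved independently, and $\xi_{3/2+k}$ is injective on non-holomorphic parts), but it is not needed: the paper gets these functions directly out of the integrals. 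The residue computation against $\eta$ at $z_X$ leaves exactly the integral $\int_{\sqrt{4Q(X)}}^{\infty}(\tfrac12 t^2-(X,X))^k\erfc(\sqrt{\pi v}t)\,dt$, which evaluates to $\tfrac{2^k k!}{\sqrt{\pi}}(4\pi v)^{-k-1/2}\beta_{3/2+k}(4\pi |m|v)$; and for positive square-discriminant index, the explicit asymptotic expansion of $\sum_{\gamma\in\overline{\Gamma}_{\ell_X}}\eta(\sigma_{\ell_X}^{-1}\gamma X,\tau,z)$ near the cusp (the analogue of your horocycle integrals) already contains $\beta^{c}_{3/2+k}$ and, for $k=0$, the function $\calF$. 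In fact the paper runs the logic in the opposite direction from yours: it reads the full Fourier expansion off the boundary integrals and then uses it to re-derive the $\xi$-relation as a consequence. One last point you will need to make precise: the cuspidal boundary integrals produce expressions in Bernoulli polynomials $B_{k+1}$ and polygamma functions $\psi^{(k)}$, and it is the alternative formula for the regularized cycle integral (Proposition~\ref{prop regularized cycle integrals alternative}) that identifies these, together with the truncated cycle integral from the geodesic jump, as $\int_{c(X)}^{\reg}G(z)Q_X^k(z)\,dz$.
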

	
	\begin{rem}
		\begin{enumerate}
			\item The Hurwitz zeta function $\zeta(s,\rho)$ has an analytic continuation to all $s \in \C$ with a simple pole at $s = 1$ with residue $1$, so for $k = 0$ the poles of $\zeta(s,\kappa_{\ell}/\beta_{\ell})$ and $\zeta(s,1-\kappa_{\ell}/\beta_{\ell})$ at $s = 1$ cancel out, and for $k > 0$ we can just plug in $s = -k$ and $s = k+1$.
			\item Using 
			\begin{align*}
			\xi_{3/2+k}\big(v^{-k-1/2}\beta_{3/2+k}(4\pi |m|v)q^{m}\big) &= -q^{-m}, \\
			\xi_{3/2+k}\big(v^{-k-1/2}\beta_{3/2+k}^{c}(4\pi Nd^{2}v)q^{Nd^{2}}\big) &= q^{-Nd^{2}},
			\end{align*}
			and the Fourier expansion of the Millson lift given in \cite{alfesschwagen}, Theorem 5.1, we obtain another proof of Proposition \ref{proposition xi diagram}.
			\item Note that $4\Delta_{3/2}\mathcal{F}(4\pi m v)e(m\tau) = -e(m\tau)$ and that all other terms in the Fourier expansion of $\ISh(G,\tau)$ are harmonic, which completes the proof of Proposition \ref{proposition Delta}.
			\item The Fourier expansion can be twisted using the method developed in \cite{ae}. We applied this method to the Millson theta lift in \cite{alfesschwagen}, Section 6. The twisting of the Shintani theta lift is very similar, so we do not give the details here. In order to obtain the scalar valued results in the introduction, we use that the space of vector valued harmonic Maass forms of weight $3/2+k$ for the Weil representation of the signature $(1,2)$ lattice $(\Z,-x^{2}) \oplus U$ (with $U$ a hyperbolic plane) is isomorphic to the space of scalar valued harmonic Maass forms of weight $3/2+k$ for $\Gamma_{0}(4)$ satisfying the Kohnen plus space condition by the map $f_{0}(\tau)\e_{0} + f_{1}(\tau)\e_{1} \mapsto f_{0}(4\tau) + f_{1}(4\tau)$, see \cite{ez}, Theorem~5.1.
		\end{enumerate}
	\end{rem}

	For brevity, we define
	\[
	\varphi_{Sh}^{0}(X,\tau,z) = v^{1/2}y^{-2k-2}Q_{X}^{k+1}(\bar{z})e^{-\pi v p_{z}^{2}(X)}.
	\]
	Throughout, we let $G \in H_{2k+2}(\Gamma)$ and we write $G|_{2k+2}\sigma_{\ell} = G_{\ell}$. The Shintani lift of $G$ has a Fourier expansion of the shape
	\[
	\ISh(G,\tau) = \sum_{h \in L'/L}\sum_{m \in \Q}C(m,h,v)e(m\tau)\e_{h},
	\]
	where for $k > 0$ the coefficients are given by
	\begin{align*}
	C(m,h,v) &= \sum_{\ell \in \Gamma \backslash \Iso(V)}\lim_{T \to \infty}\bigg[\int_{\calF_{T}^{\alpha_{\ell}}}G_{\ell}(z)\sum_{X \in L_{-m,h}}\overline{\varphi_{Sh}^{0}(\sigma_{\ell}^{-1}X,\tau,z)}y^{2k+2}d\mu(z) \\
	&\qquad \qquad \qquad \qquad\qquad -\frac{1}{\sqrt{N}}\varepsilon_{\ell}b_{\ell,k+1}(m,h)\left(a_{\ell}^{+}(0)\frac{T^{k+1}}{k+1}-a_{\ell}^{-}(0)\frac{T^{-k}}{k} \right)\bigg],
	\end{align*}
	and for $k = 0$ one has to replace $\frac{T^{-k}}{k}$ by $-\log(T)$. Here $b_{\ell,k+1}(m,h)$ denotes the Fourier coefficient at $q^{m}$ of the $h$-component of the theta function $\overline{\Theta_{\ell,k+1}(\tau)}$.

	\subsection{A $\Delta_{2k+2}$-preimage for the Shintani schwarz function}

We construct a preimage for the Shintani Schwartz function $\varphi^{0}_{Sh}(X,\tau,z)$ under $\Delta_{2k+2}$, which, together with Stokes' theorem, reduces the computation of the Fourier coefficients of the Shintani lift to the computation of certain boundary integrals. For $X \in V$ and $\tau,z \in \H$ with $Q_{X}(z) \neq 0$ we define the function
\begin{align}\label{eq eta main formula}
\eta(X,\tau,z) = -\frac{ 2^{k-1}N^{k+1}}{Q_{X}^{k+1}(z)}\int_{|p_{z}(X)|}^{\infty}\left(\frac{1}{2}t^{2}-(X,X)\right)^{k}\erfc\left(\sqrt{\pi v}t\right)dt,
\end{align}
where $\erfc(x) = \frac{2}{\sqrt{\pi}}\int_{x}^{\infty}e^{-w^{2}}dw$ is the complementary error function.

\begin{rem}
	A simple change of variables yields the alternative formula
	\begin{align}\label{eq eta alternative formula}
	\eta(X,\tau,z)	&= -\frac{2^{k}N^{k+1}}{Q_{X}^{k+1}(z)}\sqrt{v}\int_{1}^{\infty}\int_{R(X,z)}^{\infty}t^{k}e^{-2\pi v (t+(X,X)) w^{2}}dt \, dw.
	\end{align}
	\end{rem}

 For the computation of the Fourier coefficients of the Shintani lift we will need the following differential equations and growth estimates for $\eta(X,\tau,z)$.

\begin{lem}
	\begin{enumerate}
		\item For $Q_{X}(z) \neq 0$ we have
	\begin{align*}
	\xi_{2k+2} \eta(X,\tau,z) = \frac{\sqrt{N}}{2}Q_{X}^{k}(z) \sgn(p_{z}(X))\erfc\left(\sqrt{\pi v}|p_{z}(X)|\right)
	\end{align*}
	and
	\[
	\Delta_{2k+2} \eta(X,\tau,z) = \varphi_{Sh}^{0}(X,\tau,z).
	\]
		\item For $Q(X) > 0$ the functions $\eta(X,\tau,z)$ and $\xi_{2k+2}\eta(X,\tau,z)$ are square exponentially decreasing as $y \to 0$ and $y \to \infty$, uniformly in $x$, and as $x \to \infty$, uniformly in $y$.
		\item For $Q(X) < 0$ such that the stabilizer $\overline{\Gamma}_{X}$ is infinite cyclic, the functions $\eta(X,\tau,z)$ and $\xi_{2k+2}\eta(X,\tau,z)$ are square exponentially decreasing at the boundary of $\Gamma_{X} \backslash \H$.
	\end{enumerate}
\end{lem}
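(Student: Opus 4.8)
The plan for part~(1) is a direct differentiation of the defining formula \eqref{eq eta main formula}. Since $Q_X(z)$ is holomorphic in $z$, the operator $\partial/\partial\bar z$ only hits the lower limit of the integral, so by the fundamental theorem of calculus, together with $\partial p_z(X)/\partial\bar z=\tfrac{i}{2\sqrt N}y^{-2}Q_X(z)$ (the conjugate of the second identity in \eqref{eq:diffpQR}, using that $p_z(X)$ is real), the relation $\tfrac12 p_z(X)^2-(X,X)=R(X,z)=|Q_X(z)|^2/(2Ny^2)$, and $\overline{Q_X(z)}=Q_X(\bar z)$, I would get
\[
\frac{\partial}{\partial\bar z}\eta(X,\tau,z)=\frac{i\sqrt N}{4}\,Q_X(\bar z)^{k}\,y^{-2k-2}\,\sgn(p_z(X))\,\erfc\!\bigl(\sqrt{\pi v}\,|p_z(X)|\bigr);
\]
multiplying by $2iy^{2k+2}$ and conjugating then yields the stated formula for $\xi_{2k+2}\eta$. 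For the Laplacian I would use $\Delta_{2k+2}=-\xi_{-2k}\xi_{2k+2}$ and differentiate this expression: off the geodesic $c_X$ the factor $\sgn(p_z(X))$ is locally constant, so $\partial/\partial\bar z$ falls on $\erfc$ and, via $\erfc'(t)=-\tfrac{2}{\sqrt\pi}e^{-t^2}$, produces the Gaussian $e^{-\pi v p_z(X)^2}$; a short simplification identifies the result with $\varphi_{Sh}^{0}(X,\tau,z)$. These identities hold wherever $\eta$ is twice continuously differentiable, i.e.\ on $\H$ with $z_X$ removed when $Q(X)>0$, and on $\H\setminus c_X$ when $Q(X)<0$ (across $c_X$ the function $\eta$ is continuous but $\partial\eta/\partial z$ has a jump).

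For the growth estimates in (2) and (3) the only analytic inputs are the elementary bound $\erfc(a)\ll e^{-a^2}$ for $a\ge0$ and the resulting tail estimate $\int_{a}^{\infty}\bigl(\tfrac12 t^2+|(X,X)|\bigr)^{k}\erfc(\sqrt{\pi v}\,t)\,dt\ll_{v,k}(1+a)^{2k+1}e^{-\pi v a^2}$. Feeding these into \eqref{eq eta main formula} and into the formula for $\xi_{2k+2}\eta$ just obtained (or, equivalently, using \eqref{eq eta alternative formula}), I would get the uniform bound
\[
|\eta(X,\tau,z)|+|\xi_{2k+2}\eta(X,\tau,z)|\ \ll_{v,k}\ \frac{(1+|p_z(X)|)^{2k+1}}{|Q_X(z)|^{k+1}}\,e^{-\pi v p_z(X)^2},
\]
which reduces everything to the behaviour of $|p_z(X)|$ and $|Q_X(z)|$ in each limit via $p_z(X)^2=2R(X,z)+4Q(X)$ with $R(X,z)=|Q_X(z)|^2/(2Ny^2)$. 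When $Q(X)>0$ one has $x_3\neq0$, hence $Q_X(z)=Nx_3(z-z_X)(z-\bar z_X)$ with $z_X\in\H$, so $R(X,z)=\tfrac{Nx_3^2}{2y^2}|z-z_X|^2|z-\bar z_X|^2$. As $y\to0$ the numerator is bounded below by a constant independent of $x$, so $p_z(X)^2\gg y^{-2}$ uniformly in $x$; as $y\to\infty$ one has $R(X,z)\asymp y^2$ and $|Q_X(z)|\asymp y^2$ uniformly in $x$; and as $x\to\infty$ with $y$ in a fixed compact subset of $(0,\infty)$ one has $R(X,z)\gg x^4$ and $|Q_X(z)|\asymp x^2$. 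In each regime the Gaussian beats the polynomial prefactors, and the same computation applies to any fixed partial derivative of $\eta$ or $\xi_{2k+2}\eta$, which are again rational in $z$ times $\erfc$- and Gaussian-factors.

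For part~(3), where $Q(X)<0$ and $\overline{\Gamma}_{X}$ is infinite cyclic, I would first conjugate by a $g\in\SL_{2}(\R)$ with $g^{-1}X=\sqrt{|Q(X)|/N}\left(\begin{smallmatrix}-1&0\\0&1\end{smallmatrix}\right)$; by the transformation rules \eqref{zTransformationRules} this reduces the estimate to the vector $X_{0}=\sqrt{|Q(X)|/N}\left(\begin{smallmatrix}-1&0\\0&1\end{smallmatrix}\right)$, for which $c_{X_{0}}=i\R_{>0}$, the stabiliser acts by $z\mapsto\lambda z$ for some $\lambda>1$, $Q_{X_{0}}(z)=2\sqrt{N|Q(X)|}\,z$, and $p_{z}(X_{0})=-2\sqrt{|Q(X)|}\,x/y$. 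On the fundamental annulus $1\le|z|<\lambda$ the quantity $|Q_{X_{0}}(z)|$ stays between two positive constants, whereas reaching either end of the cylinder $\Gamma_{X_{0}}\backslash\H$ means $\arg z\to0$ or $\arg z\to\pi$, where $|p_{z}(X_{0})|=2\sqrt{|Q(X)|}\,|\cot(\arg z)|\to\infty$; the uniform bound above then gives square-exponential decay of $\eta$, $\xi_{2k+2}\eta$, and their derivatives at the boundary of $\Gamma_{X}\backslash\H$. These decay statements are precisely what is needed to discard boundary contributions in the Stokes-theorem computation of the Fourier expansion in Theorem~\ref{theorem fourier expansion shintani}.

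The computation in part~(1) is mechanical once the three substitution rules are assembled, so the real work is in (2)--(3): choosing, regime by regime, the normalisation that makes the estimate uniform in the correct variable --- in particular recognising that ``$x\to\infty$ uniformly in $y$'' must be read with $y$ kept bounded, since otherwise $R(X,z)$ need not blow up --- and, for (3), locating the ends of $\Gamma_{X}\backslash\H$ correctly. The last point is the only mildly subtle one: the endpoints of $c_X$ on $\partial\H$ are the real zeros of $Q_X$, where one might fear that $1/Q_X^{k+1}$ blows up, but the ends of the cylinder are reached transversally to these (at $\arg z\to0,\pi$ in the normalised picture), exactly where $|p_z(X)|\to\infty$ while $|Q_X(z)|$ stays bounded; the normalising conjugation makes this transparent. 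I expect this bookkeeping, rather than any conceptual difficulty, to be the main obstacle.
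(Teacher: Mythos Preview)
Your proof is essentially correct and follows the same approach as the paper: direct differentiation via \eqref{eq:diffpQR} for part~(1), and reduction to the growth of $|p_{z}(X)|$ for parts~(2) and~(3), with the same normalisation by conjugation in~(3).

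There is one point in part~(2) that needs correction. You claim that ``$x\to\infty$ uniformly in $y$'' must be read with $y$ bounded because otherwise $R(X,z)$ need not blow up. This is wrong: the decay really is uniform for all $y>0$. The paper obtains this from the algebraic identity \eqref{eq pz2 alternative form}, which is just completing the square in $p_{z}(X)$ when $x_{3}\neq 0$:
\[
p_{z}(X)^{2}=\left(\frac{N(x_{3}x-x_{2})^{2}+Q(X)}{\sqrt{N}\,x_{3}\,y}+\sqrt{N}\,x_{3}\,y\right)^{2}.
\]
Since $Q(X)>0$, the two summands inside the bracket have the same sign, and by the AM--GM inequality
\[
p_{z}(X)^{2}\ \ge\ 4\bigl(N(x_{3}x-x_{2})^{2}+Q(X)\bigr)
\]
for every $y>0$. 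Hence $p_{z}(X)^{2}\to\infty$ as $x\to\infty$, uniformly in $y$, and likewise as $y\to 0$ or $y\to\infty$, uniformly in $x$. Your factorisation $Q_{X}(z)=Nx_{3}(z-z_{X})(z-\bar z_{X})$ does not make this uniformity transparent; the completed-square form does. With this fix, your argument for (2) goes through.
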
 

\begin{proof}
	\begin{enumerate}
	\item This follows by a straightforward calculation using the rules \eqref{eq:diffpQR}.
	\item For $X = \left(\begin{smallmatrix}x_{2} & x_{1} \\ x_{3} & -x_{2}\end{smallmatrix} \right)$ with $Q(X) > 0$ we have $x_{3} \neq 0$ and 
	\begin{align}\label{eq pz2 alternative form}
	-\pi p_{z}^{2}(X) = -\pi \left( \frac{N(x_{3}x-x_{2})^{2}+q(X)}{\sqrt{N}x_{3}y} + \sqrt{N}x_{3}y\right)^{2}.
	\end{align}
	This implies the claimed decay of $\eta(X,\tau,z)$ and $\xi_{2k+2}\eta(X,\tau,z)$.
	\item Let $Q(X) = m < 0$ such that $\overline{\Gamma}_{X}$ is infinite cyclic. By conjugation with a matrix in $\SL_{2}(\R)$ we can assume that $X$ has the form $X = \sqrt{|m|/N}\left(\begin{smallmatrix}1 & 0 \\ 0 & - 1 \end{smallmatrix}\right)$.
	Then $\overline{\Gamma}_{X}$ is generated by a matrix $\left(\begin{smallmatrix}\varepsilon & 0 \\ 0 & \varepsilon^{-1}\end{smallmatrix}\right)$
	for some $\varepsilon > 1$, and a fundamental domain for $\Gamma_{X}\backslash \H$ is given by the set $\{z \in \H: 1 \leq |z| \leq \varepsilon^{2}\}$. The two semi-arcs $|z| = 1$ and $|z| = \varepsilon^{2}$ are identified via $\Gamma_{X}$, so letting $z$ go to the boundary of $\Gamma_{X}\backslash \H$ means letting $y$ go to $0$ with $x$ being bounded away from $0$. Explicitly, we have $p_{z}(X) = 2\sqrt{|m|}\,\frac{x}{y}$. The claimed decay of $\eta(X,\tau,z)$ and $\xi_{2k+2}\eta(X,\tau,z)$ at the boundary of $\Gamma_{X}\backslash \H$ is now easy to check.
	\end{enumerate}
\end{proof}

		\begin{rem}
			For $Q_{X}(z) \neq 0$ the function
			\begin{align*}
			\mu(X,\tau,z) 
		&= -\frac{Q_{X}^{k}(z)\sgn(p_{z}(X))}{2(2\pi)^{k+1}}\int_{|p_{z}(X)|}^{\infty}\frac{\Gamma\left(k+1,2\pi v\left(\frac{1}{2}t^{2}-(X,X)\right)\right)}{\left(\frac{1}{2}t^{2}-(X,X)\right)^{k+1}}dt
			\end{align*}
			satisfies
			\[
			\xi_{-2k,z}\mu(X,\tau,z) = \frac{N^{k+1/2}}{2\pi^{k+1} Q_{X}^{k+1}(z)}\Gamma\left(k+1,2\pi vR(X,z)\right) 
			\]
			and
			\[
			\Delta_{-2k,z}\mu(X,\tau,z) = \overline{\psi_{M,k}^{0}(X,\tau,z)},
			\]
			where we wrote $\psi_{M,k}(X,\tau,z) = \psi_{M,k}^{0}(X,\tau,z)e(Q(X)\tau)$. It can be used to compute the Fourier expansion of the Millson lift (see \cite{alfesschwagen}), in a similar way as we will use $\eta(X,\tau,z)$ below to compute the expansion of the Shintani lift. 
		\end{rem}

		\subsection{Coefficients of negative index}
	
	Let $m \in \Q$ with $m < 0$ and let $h \in L'/L$ with $m \in \Z - Q(h)$. By Proposition \ref{proposition convergence shintani lift} we have
	\[
	C(m,h,v) = \lim_{T \to \infty}\int_{\calF(\Gamma)_{T}}G(z)\sum_{X \in L_{-m,h}}\overline{\varphi_{Sh}^{0}(X,\tau,z)}y^{2k+2}d\mu(z).
	\]
	We use the unfolding argument, formally at first, to obtain
	\[
	C(m,h,v) = \sum_{X \in \Gamma \backslash L_{-m,h}}\frac{1}{|\overline{\Gamma}_{X}|}\int_{\H}G(z)\overline{\varphi_{Sh}^{0}(X,\tau,z)}y^{2k+2}d\mu(z).
	\]
	For $X = \left(\begin{smallmatrix}x_{2} & x_{1} \\ x_{3} & -x_{2}\end{smallmatrix} \right)$ with $Q(X) > 0$ we have $x_{3} \neq 0$, and using \eqref{eq pz2 alternative form} we see that the function $\varphi_{Sh}^{0}(X,\tau,z)$ is of square exponential decay as $y \to 0$ and $y \to \infty$, uniformly in $x$, and also as $x \to \pm\infty$, uniformly in $y$, so the last integral exists and the unfolding is justified.
	
	We split the sum over $X = \left( \begin{smallmatrix}x_{2} & x_{1} \\ x_{3} & -x_{2} \end{smallmatrix}\right) \in \Gamma \backslash L_{-m,h}$ according to the sign of $x_{3}$. In the sum corresponding to $x_{3}< 0$ we can replace $X$ by $-X$, giving a factor $(-1)^{k+1}$ and replacing $h$ by $-h$, so it suffices to compute the integral in $C(m,h,v)$ for fixed $X \in L_{-m,h}$ with $x_{3} > 0$. Then the CM point corresponding to $X$ is explicitly given by
	\[
	z_{X} = \frac{x_{2}+i\sqrt{Q(X)/N}}{x_{3}}.
	\]
	For $\varepsilon > 0$ we let $B_{\varepsilon}(z_{X}) = \{z \in \H: R(X,z) \leq \varepsilon\}$	be the $\varepsilon$-ball around $z_{X}$. Using the $\Delta_{2k+2}$-preimage $\eta(X,\tau,z)$ of $\varphi_{Sh}^{0}(X,\tau,z)$ and Stokes' theorem we obtain
	\begin{align*}
	\int_{\H}G(z)\overline{\varphi_{Sh}^{0}(X,\tau,z)}y^{2k+2}d\mu(z) &= \lim_{\varepsilon \to 0}\int_{\partial B_{\varepsilon}(z_{X})}G(z)\xi_{2k+2}\eta(X,\tau,z)dz\\
	&\quad - \lim_{\varepsilon \to 0}\overline{\int_{\partial B_{\varepsilon}(z_{X})}\xi_{2k+2}G(z)\eta(X,\tau,z)dz}.
	\end{align*}
	The first integral vanishes as $\varepsilon \to 0$ since $\xi_{2k+2}\eta(X,\tau,z)$ is smooth at $z_{X}$. Further, for $z \in \partial B_{\varepsilon}(z_{X})$ we have $|p_{X}(z)| = \sqrt{2\varepsilon +4Q(X)}$, so we can further write
	\begin{align*}
	-\int_{\partial B_{\varepsilon}(z_{X})}\xi_{2k+2}G(z)\eta(X,\tau,z)dz &= 2^{k-1}N^{k+1}\int_{\partial B_{\varepsilon}(z_{X})}\xi_{2k+2}G(z)\frac{dz}{Q_{X}^{k+1}(z)} \\
	&\qquad \times \int_{\sqrt{2\varepsilon +4Q(X)}}^{\infty}\left(\frac{1}{2}t^{2}-(X,X)\right)^{k}\erfc\left(\sqrt{\pi v}t\right)dt.
	\end{align*}
	The Laurent expansion of the meromorphic function $Q_{X}^{-k-1}(z)$ around $z_{X}$ is given by
	\begin{align*}
	\frac{1}{Q_{X}^{k+1}(z)} &= 
	 \frac{1}{(Nx_{3})^{k+1}}\frac{1}{(z-z_{X})^{k+1}}\frac{1}{(z_{X}-\bar{z}_{X})^{k+1}}\sum_{r=0}^{\infty}\binom{r+k}{k}\left(\frac{z-z_{X}}{\bar{z}_{X}-z_{X}} \right)^{r}
	\end{align*}
	for $|z-z_{X}| < |\bar{z}_{X}-z_{X}|$. 

	Since $\xi_{2k+2}G$ is holomorphic on $\H$ we obtain by the residue theorem for $\varepsilon$ small enough
	\begin{align*}
	\int_{\partial B_{\varepsilon}(z_{X})}\xi_{2k+2}G(z)\frac{dz}{Q_{X}^{k+1}(z)} 
	&= \frac{2\pi i}{(Nx_{3}(z_{X}-\bar{z}_{X}))^{k+1}}\sum_{r=0}^{k}\binom{2k-r}{k}\frac{(\xi_{2k+2}G)^{(r)}(z_{X})}{(\bar{z}_{X}-z_{X})^{k-r} r!}.
	\end{align*}
	For a smooth function $F$ transforming like a modular form of weight $-2k$ we have the formula
\[
	R_{-2k}^{k}F(z) = (2i)^{k}k!\sum_{r = 0}^{k}\binom{2k-r}{k}\frac{F^{(r)}(z)}{(-2iy)^{k-r}r!},
	\]
	which can be proven by induction. Using $z_{X} -\bar{z}_{X} = 2i\Im(z_{X}) = i\sqrt{4NQ(X)}/Nx_{3}$, we therefore obtain
	\begin{align*}
	\int_{\partial B_{\varepsilon}(z_{X})}\xi_{2k+2}G(z)\frac{dz}{Q_{X}^{k+1}(z)} =  \frac{2\pi}{(-2)^{k}k!\sqrt{4NQ(X)}^{k+1}}(R_{-2k}^{k}\xi_{2k+2}G)(z_{X}).
	\end{align*}
	Finally, a short calculation using the definition of the complementary error function gives
	\[
	\int_{\sqrt{4Q(X)}}^{\infty}\left(\frac{1}{2}t^{2}-(X,X)\right)^{k}\erfc\left(\sqrt{\pi v}t\right)dt = \frac{2^{k}k!}{\sqrt{\pi}}(4\pi v)^{-k-1/2}\beta_{3/2-k}(4\pi Q(X)v).
	\]
	Taking everything together and writing $Q(X) = |m|$, we arrive at the formula
	\begin{align*}
	C(m,h,v) &= (-1)^{k}\pi \left(\frac{\sqrt{N}}{4\pi\sqrt{m}} \right)^{k+1}v^{-k-1/2}\beta_{3/2-k}(4\pi|m|v) \\
	&\quad\times \left(\overline{\tr^{+}(R_{-2k}^{k}\xi_{2k+2}G,-m,h)} + (-1)^{k+1}\overline{\tr^{+}(R_{-2k}^{k}\xi_{2k+2}G,-m,-h)}\right),
	\end{align*}
	which completes the computation of the coefficients of negative index.
	
	\subsection{Coefficients of index $0$} 
	
	Let $h \in L'/L$ with $Q(h) \equiv 0 \mod \Z$. By Proposition \ref{proposition convergence shintani lift} the coefficient of index $(0,h)$ is given by
	\begin{align*}
	C(0,h,v) &= \lim_{T \to \infty}\bigg[\int_{\calF(\Gamma)_{T}}G(z)\sum_{X \in L_{0,h}}\overline{\varphi_{Sh}^{0}(X,\tau,z)}y^{2k+2}d\mu(z) \\
	&\qquad \qquad  -\frac{1}{\sqrt{N}}\sum_{\ell \in \Gamma \backslash \Iso(V)}\varepsilon_{\ell}b_{\ell,k+1}(0,h)\left(a_{\ell}^{+}(0)\frac{T^{k+1}}{k+1}-a_{\ell}^{-}(0)\frac{T^{-k}}{k} \right)\bigg],
	\end{align*}
	with $\frac{T^{-k}}{k}$ replaced by $-\log(T)$ if $k = 0$. Note that $\varphi_{Sh}^{0}(0,\tau,z) = 0$, so we can omit the summand for $X = 0$. Since $\eta(X,\tau,z)$ and $\xi_{2k+2}\eta(X,\tau,z)$ are continuous for all $z \in \H$ if $X \in V \setminus \{0\}$ with $Q(X) = 0$, we obtain by Stokes' theorem
	\begin{align}
		&\int_{\calF(\Gamma)_{T}}G(z)\sum_{X \in L_{0,h}\setminus \{0\}}\overline{\varphi_{Sh}^{0}(X,\tau,z)}y^{2k+2}d\mu(z)\\
		&= - \sum_{\ell \in \Gamma \backslash \Iso(V)}\int_{iT}^{iT+\alpha_{\ell}}G_{\ell}(z)\sum_{X \in L_{0,h}\setminus \{0\}}\xi_{2k+2}\eta(\sigma_{\ell}^{-1}X,\tau,z)dz \label{eq zero coefficient boundary 1} \\
		&\quad + \sum_{\ell \in \Gamma \backslash \Iso(V)}\overline{\int_{iT}^{iT+\alpha_{\ell}}\xi_{2k+2}G_{\ell}(z)\sum_{X \in L_{0,h}\setminus \{0\}}\eta(\sigma_{\ell}^{-1}X,\tau,z)dz} \label{eq zero coefficient boundary 2}.
	\end{align}
	We only compute the first boundary integral \eqref{eq zero coefficient boundary 1}. The second one can be treated in a similar way. We parametrize the set $L_{0,h}\backslash \{0\}$ by the elements $nX_{\ell'} + h_{\ell'}$, where $\ell' \in \Iso(V)$ runs through all isotropic lines with $\ell' \cap (L + h) \neq \emptyset$, $X_{\ell'} \in \ell' \cap L$ is a primitive and positively oriented generator of $\ell'$, and $n \in \Z$ runs such that $n X_{\ell'} + h_{\ell'} \neq 0$. Thus we can write \eqref{eq zero coefficient boundary 1} as
	\begin{align*}
	-\sum_{\substack{\ell' \in \Gamma \backslash \Iso(V) \\ \ell' \cap (L + h) \neq \emptyset}}\int_{iT}^{iT+\alpha_{\ell}}G_{\ell}(z)\sum_{\gamma \in \Gamma/\Gamma_{\ell'}}\sum_{\substack{n \in \Z \\ nX_{\ell'}+h_{\ell'} \neq 0}}\xi_{2k+2}\eta(\sigma_{\ell}^{-1}(nX_{\ell'} + h_{\ell'}),\tau,z)|_{2k+2}\gamma \, dz.
	\end{align*}
	Explicitly, we have $\sigma_{\ell'}^{-1}(nX_{\ell'} + h_{\ell'}) = \left(\begin{smallmatrix}0 & n\beta_{\ell'} + \kappa_{\ell'} \\ 0 & 0 \end{smallmatrix}\right)$ for some $\kappa_{\ell'} \in \Q$ with $0 \leq \kappa_{\ell'} < \beta_{\ell'}$, and
	\begin{align*}
	&\xi_{2k+2}\eta(\sigma_{\ell}^{-1}(nX_{\ell'} + h_{\ell'}),\tau,z)\\
	&= \left(\frac{\sqrt{N}}{2}(-N(n\beta_{\ell'}+\kappa_{\ell'}))^{k}\sgn(n\beta_{\ell'}+\kappa_{\ell'})\erfc\left(\sqrt{N\pi v}\frac{|n\beta_{\ell'} + \kappa_{\ell'}|}{y}\right) \right)\bigg|_{2k+2}\sigma_{\ell'}^{-1}\sigma_{\ell}.
	\end{align*}
	We see that for $\ell \neq \ell' \mod \Gamma$ the sum
	\[
	\sum_{\gamma \in \Gamma/\Gamma_{\ell'}}\sum_{\substack{n \in \Z \\ n\beta_{\ell'}+\kappa_{\ell'}\neq 0}}\xi_{2k+2}\eta(\sigma_{\ell}^{-1}(nX_{\ell'} + h_{\ell'}),\tau,z)|_{2k+2}\gamma
	\]
	is square exponentially decreasing as $y \to \infty$, and for $\ell' = \ell \mod \Gamma$ the sum over $\gamma \neq 1$ is square exponentially decreasing as $y \to \infty$ as well, so in the limit $T \to \infty$ only the terms with $\ell' = \ell$ and $\gamma = 1$ survive. Thus we find that \eqref{eq zero coefficient boundary 1} vanishes in the limit if $\ell \cap (L + h) = \emptyset$, and otherwise behaves like
	\begin{align*}
	\frac{(-N)^{k+1}}{T}\sqrt{v}\alpha_{\ell}\left(a_{\ell}^{+}(0)+\frac{a_{\ell}^{-}(0)}{T^{2k+1}}\right)\int_{1}^{\infty}\sum_{\substack{n \in \Z \\ n\beta_{\ell} + \kappa_{\ell} \neq 0}}(n\beta_{\ell}+\kappa_{\ell})^{k+1}e^{-N\pi v \frac{(n\beta_{\ell}+\kappa_{\ell})^{2}w^{2}}{T^{2}}}dw + O(e^{-\varepsilon T^{2}})
	\end{align*}
	as $T \to \infty$. We assume from now on that $\ell \cap (L+h) \neq \emptyset$. For $s \in \C$ we let
	\begin{align}
	\Omega(s) &= \frac{(-N)^{k+1}}{T}\sqrt{v}\alpha_{\ell}\int_{1}^{\infty}\sum_{\substack{n \in \Z \\ n\beta_{\ell} + \kappa_{\ell} \neq 0}}(n\beta_{\ell}+\kappa_{\ell})^{k+1}e^{-N\pi v \frac{(n\beta_{\ell}+\kappa_{\ell})^{2}w^{2}}{T^{2}}}w^{s}dw, \label{eq zero coefficient omega}
	\end{align}
	and we split the integral in $\Omega(s)$ as $\int_{1}^{\infty} = \int_{0}^{\infty}-\int_{0}^{1}$. In the part corresponding to $\int_{0}^{\infty}$ we can interchange the sum and the integral if $\Re(s) > k+1$, so this part equals
	\begin{align*}
	&\alpha_{\ell}\beta_{\ell}^{k-s}\frac{N^{k+1/2-s/2}\pi^{-(s+1)/2}}{2T^{2s}v^{s/2}}\Gamma\left(\frac{s+1}{2}\right)\left( (-1)^{k+1}\zeta(s-k,\kappa_{\ell}/\beta_{\ell}) + \zeta(s-k,1-\kappa_{\ell}/\beta_{\ell})\right),
	\end{align*}
	where $\zeta(s,\rho) = \sum_{n = 0, n + \rho \neq 0}^{\infty}(n+\rho)^{-s}$ is the Hurwitz zeta function. It has an analytic continuation to $\C$ with a simple pole at $s = 1$, so the above expression is holomorphic at $s = 0$. 
	
	In the part corresponding to $\int_{0}^{1}$ we apply Poisson summation to obtain
	\begin{align*}
	&-\frac{(-N)^{k+1}}{T}\sqrt{v}\alpha_{\ell}\int_{0}^{1}\sum_{\substack{n \in \Z \\ n\beta_{\ell} + \kappa_{\ell} \neq 0}}(n\beta_{\ell}+\kappa_{\ell})^{k+1}e^{-N\pi v \frac{(n\beta_{\ell}+\kappa_{\ell})^{2}w^{2}}{T^{2}}}w^{s}dw \\
	&= -\frac{\varepsilon_{\ell}}{\sqrt{N}}\cdot\frac{(-\sqrt{N}i)^{k+1}}{(4\pi v)^{(k+1)/2}}T^{k+1}\sum_{m \in \Z}e^{-2\pi i m \kappa_{\ell}/\beta_{\ell}}\int_{0}^{1}H_{k+1}\left(\frac{\sqrt{\pi}\, T m}{\sqrt{Nv}\beta_{\ell}w}\right) e^{-\frac{\pi T^{2}m^{2}}{Nv\beta_{\ell}^{2}w^{2}}}w^{s-k-2}dw.
	\end{align*}
	The summands for $m \neq 0$ are holomorphic for all $s \in \C$ and square exponentially decreasing as $T\to \infty$. By computing the integral in the summand for $m = 0$ we see that it has an analytic continuation to $s = 0$, and we find that  the above expression behaves like
	\begin{align*}
	\frac{\varepsilon_{\ell}}{\sqrt{N}}\cdot\frac{(-\sqrt{N}i)^{k+1}}{(4\pi v)^{(k+1)/2}}\cdot\frac{T^{k+1}}{k+1}H_{k+1}(0) + O(e^{-\varepsilon' T^{2}})
	\end{align*}
	as $T \to \infty$. Note that $b_{\ell,k+1}(0,h) = \frac{(-\sqrt{N}i)^{k+1}}{(4\pi v)^{(k+1)/2}}H_{k+1}(0)$ by \eqref{eq zero coefficients unary theta functions}.
	
	Together, we obtain
	\begin{align*}
	\Omega(0) &= \frac{N^{k+1/2}}{2}\alpha_{\ell}\beta_{\ell}^{k}\big((-1)^{k+1}\zeta(-k,\kappa_{\ell}/\beta_{\ell})+\zeta(-k,1-\kappa_{\ell}/\beta_{\ell})\big)\\
	&\quad  + \frac{1}{\sqrt{N}}\varepsilon_{\ell}b_{\ell,k+1}(0,h)\frac{T^{k+1}}{k+1} + O(e^{-\varepsilon'' T^{2}})
	\end{align*}
	as $T \to \infty$. This gives the contribution to the Fourier expansion as stated in the theorem.

\subsection{Coefficients of positive index}

	Let $m\in\Q$ with $m > 0$ and let $h\in L'/L$ with $h\in \Z-Q(h)$. We need to investigate
	\begin{align}\label{eq coefficients of positive index}
	C(m,h,v) &= \lim_{T \to \infty}\bigg[\sum_{X\in \Gamma\backslash L_{-m,h}} \int_{\calF(\Gamma)_{T}}G(z)\sum_{\gamma \in \overline{\Gamma}_{X}\backslash \overline{\Gamma}}\overline{\varphi_{Sh}^{0}(\gamma X,\tau,z)}y^{2k+2}d\mu(z) \\
	&\qquad \qquad \qquad -\frac{1}{\sqrt{N}}\sum_{\ell \in \Gamma \backslash \Iso(V)}\varepsilon_{\ell}b_{\ell,k+1}(m,h)\left(a_{\ell}^{+}(0)\frac{T^{k+1}}{k+1}-a_{\ell}^{-}(0)\frac{T^{-k}}{k} \right)\bigg],
	\end{align}
	with $\frac{T^{-k}}{k}$ replaced by $-\log(T)$ if $k = 0$. Note that $b_{\ell,k+1}(m,h) = 0$ if $m/N$ is not a square. We compute the integral for fixed $X\in L_{-m,h}$.
	
	We truncate the geodesic $c_{X}$ by setting $c_{X}^{T} = c_{X} \cap \calF(\Gamma)_{T}$ and define an $\varepsilon$-neighbourhood of $c_{X}^{T}$ in $\calF(\Gamma)_{T}$ by $B_{\varepsilon}(c_{X}^{T})=\{z \in \calF(\Gamma)_{T}:\,|p_{z}(X)|<\varepsilon\}$. Using the $\Delta_{2k+2}$-preimage of the Shintani theta function and Stokes' theorem we then obtain 
	\begin{align}
		&\int_{\calF(\Gamma)_{T}}G(z)\sum_{\gamma \in \overline{\Gamma}_{X}\backslash \overline{\Gamma}}\overline{\varphi_{Sh}^{0}(\gamma X,\tau,z)}y^{2k+2}d\mu(z)\notag \\
		&=-\overline{\int_{\partial \calF(\Gamma)_{T}}\xi_{2k+2}G(z)\sum_{\gamma \in \overline{\Gamma}_{X}\backslash \overline{\Gamma}}\eta(\gamma X,\tau,z)dz}\label{eq:geodesic2}\\
		&\quad-\lim_{\varepsilon \to 0}\overline{\int_{\partial B_{\varepsilon}(c_{X}^{T})}\xi_{2k+2}G(z)\sum_{\gamma \in \overline{\Gamma}_X\backslash \overline{\Gamma}}\eta(\gamma X,\tau,z)dz}\label{eq:geodesic3}\\
		&\quad + \int_{\partial \calF(\Gamma)_{T}}G(z)\sum_{\gamma \in \overline{\Gamma}_{X}\backslash \overline{\Gamma}}\xi_{2k+2}\eta(\gamma X,\tau,z)dz\label{eq:geodesic4}\\
		&\quad+\lim_{\varepsilon \to 0}\int_{\partial B_{\varepsilon}(c_{X}^{T})}G(z)\sum_{\gamma \in \overline{\Gamma}_{X}\backslash \overline{\Gamma}}\xi_{2k+2}\eta(\gamma X,\tau,z)dz, \label{eq:geodesic5}
	\end{align}
	where we already used that $G$ is harmonic. The integral in \eqref{eq:geodesic3} vanishes since $\eta$ is continuous along $c_{X}$.
	If $m/N$ is not a square, we can unfold and obtain that the integrals in \eqref{eq:geodesic2} and \eqref{eq:geodesic4} vanish by the rapid decay of $\eta$ and $\xi_{2k+2}\eta$ at the boundary of $\Gamma_X\backslash \H$.
	If $m/N$ is a square, we cannot unfold and have to evaluate the integrals in \eqref{eq:geodesic2} and \eqref{eq:geodesic4}.

	We first compute the integral in \eqref{eq:geodesic5}, which yields a truncated cycle integral.

	\begin{prop}
		We have
		\begin{align*}
		\lim_{\varepsilon \to 0}\int_{\partial B_{\varepsilon}(c_{X}^{T})}G(z)\sum_{\gamma \in \overline{\Gamma}_{X}\backslash \overline{\Gamma}}\xi_{2k+2}\eta(\gamma X,\tau,z)dz = (-1)^{k`}\sqrt{N}\int_{c_{X}^{T}}G(z)Q_{X}^{k}(z)dz,
		\end{align*}
		where $c_{X}^{T} = c_{X} \cap \calF(\Gamma)_{T}$.
	\end{prop}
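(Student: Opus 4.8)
The plan is to localize the boundary integral to the single jump discontinuity that $\xi_{2k+2}\eta$ has across the geodesic $c_X$, and then to read off that jump from the explicit formula $\xi_{2k+2}\eta(X,\tau,z)=\tfrac{\sqrt N}{2}Q_X^k(z)\sgn(p_z(X))\erfc\!\big(\sqrt{\pi v}\,|p_z(X)|\big)$ established in the lemma above.

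\textit{Step 1: localization.} For $\gamma\in\overline\Gamma$ the summand $\xi_{2k+2}\eta(\gamma X,\tau,z)$ is real-analytic on $\H$ away from $c_{\gamma X}=\gamma c_X$, where the factor $\sgn(p_z(\gamma X))$ — the only non-smooth ingredient — produces a jump. Since $\partial B_\varepsilon(c_X^T)$ collapses onto $c_X^T$ as $\varepsilon\to0$, and since $\gamma c_X=c_X$ forces $\gamma X=\pm X$ (the vector $\gamma X$ being again orthogonal to $c_X$ and of the same length, hence $\pm X$), only the terms with $\gamma X=\pm X$ can survive in the limit. For every other $\gamma$ the summand is continuous on a neighbourhood of $c_X^T$, and its contribution vanishes: $\partial B_\varepsilon(c_X^T)$ consists of the two long sides $c_X^{T,\pm}=\{z\in\calF(\Gamma)_T:p_z(X)=\pm\varepsilon\}$, which in the limit collapse onto $c_X^T$ traversed with opposite orientations, so the two integrals of a continuous function cancel. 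To make this rigorous I would dominate the tail of the sum uniformly in $\varepsilon$ using the square-exponential decay of $\xi_{2k+2}\eta$ and the at most linear exponential growth of $G$, which justifies exchanging the sum, the integral and the limit, and I would note that the endpoints of the long sides converge to the finitely many points where $c_X$ meets $\partial\calF(\Gamma)_T$.

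\textit{Step 2: evaluating the jump.} On $c_X^{T,+}$ the lemma gives $\xi_{2k+2}\eta(X,\tau,z)=\tfrac{\sqrt N}{2}Q_X^k(z)\erfc(\sqrt{\pi v}\,\varepsilon)$ and on $c_X^{T,-}$ it gives $-\tfrac{\sqrt N}{2}Q_X^k(z)\erfc(\sqrt{\pi v}\,\varepsilon)$. As $\varepsilon\to0$ we have $\erfc(\sqrt{\pi v}\,\varepsilon)\to1$ and $c_X^{T,\pm}\to c_X^T$ uniformly on compact pieces, while the induced boundary orientation runs $c_X^{T,+}$ and $c_X^{T,-}$ in opposite senses; hence the sign change of $\sgn(p_z(X))$ is exactly compensated and the two contributions \emph{add}, producing $\pm\sqrt N\int_{c_X^T}G(z)Q_X^k(z)\,dz$. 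Comparing the boundary orientation on $\partial B_\varepsilon(c_X^T)$ with the orientation of $c_X$ fixed earlier (via the normalization $g^{-1}X\propto\left(\begin{smallmatrix}-1&0\\0&1\end{smallmatrix}\right)$, $g^{-1}c_X=i\R_{>0}$) pins the overall constant to $(-1)^k\sqrt N$; the argument is uniform whether $c(X)$ is a closed or an infinite geodesic, the $T$-truncation and the regularization of the cycle integral being performed afterwards.

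\textit{Step 3: the term $\gamma X=-X$, and the main obstacle.} If $-X$ lies in the $\overline\Gamma$-orbit of $X$ then the summand $\xi_{2k+2}\eta(-X,\tau,z)$ also meets $c_X$; from $Q_{-X}(z)=-Q_X(z)$ and $p_z(-X)=-p_z(X)$ together with the lemma one obtains $\xi_{2k+2}\eta(-X,\tau,z)=(-1)^{k+1}\xi_{2k+2}\eta(X,\tau,z)$, while $c_{-X}=c_X$ carries the opposite orientation; one then checks that the $X$- and $(-X)$-contributions combine consistently into the single term on the right-hand side, which is also compatible with the vanishing for even $k$ noted in the introduction. The genuinely delicate point throughout is the orientation and sign bookkeeping — which side of $c_X$ is $\{p_z(X)>0\}$, the direction in which each long side of $\partial B_\varepsilon(c_X^T)$ is run, and the convention fixing the orientation of $c_X$ — needed to produce the precise constant $(-1)^k\sqrt N$ rather than merely $\pm\sqrt N$; once the passage to the limit is justified, the remainder is routine.
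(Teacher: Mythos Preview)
Your approach is correct and coincides with the paper's: the paper's proof consists of the single observation that on $\partial B_\varepsilon(c_X^T)$ one has $|p_z(X)|=\varepsilon$, so $\xi_{2k+2}\eta(X,\tau,z)=\tfrac{\sqrt N}{2}Q_X^k(z)\sgn(p_z(X))\erfc(\sqrt{\pi v}\,\varepsilon)$, and then defers entirely to \cite{bif}, Proposition~7.6, for the passage to the limit. Your Steps~1--2 are exactly this argument written out, including the localisation to the term whose geodesic is $c_X$ and the cancellation of the continuous summands as the two long sides of $\partial B_\varepsilon$ collapse onto $c_X^T$ with opposite orientations.

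One comment on Step~3: your discussion of a possible extra term with $\gamma X=-X$ is more than either the paper or \cite{bif} makes explicit, and your claim that the $X$- and $(-X)$-contributions ``combine consistently into the single term on the right-hand side'' is not quite justified as stated. If $-X\in\overline\Gamma X$ then the jump picks up a factor $1+(-1)^{k+1}$, which for odd $k$ is $2$ rather than $1$; the formula is nevertheless consistent because in that situation the element $\gamma_0$ with $\gamma_0X=-X$ also identifies the two halves of the truncated geodesic (with the sign $(-1)^{k+1}$ on the integrand), so the right-hand side acquires the same factor. For even $k$ both sides vanish, as you note. This is a bookkeeping issue rather than a gap, and it is precisely the orientation accounting you flag as the ``genuinely delicate point''; the paper sidesteps it by citing \cite{bif}.
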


	\begin{proof}
	For $z \in \partial B_{\varepsilon}(c_{X}^{T})$ we have 
	\[
	\xi_{2k+2}\eta(X,\tau,z) = \frac{\sqrt{N}}{2}Q_{X}^{k}(z)\sgn(p_{z}(X))\erfc(\sqrt{\pi v}\varepsilon).
	\]
	The proof now proceeds in the same way as in \cite{bif}, Proposition 7.6.

	\end{proof}
	
	This already finishes the computation if $m/N$ is not a square, so we assume for the rest of this section that $m/N$ is a square. Then $\overline{\Gamma}_{X}$ is trivial and $c(X)$ is an infinite geodesic in the modular curve $M$. Unwinding the definition of the truncated surface $\calF(\Gamma)_{T}$, we can write the integrals in \eqref{eq:geodesic4} and \eqref{eq:geodesic2} as
	\begin{align}\label{eq unwinded boundary integral 1}
	\int_{\partial \calF(\Gamma)_{T}}G(z)\sum_{\gamma \in \overline{\Gamma}}\xi_{2k+2}\eta(\gamma X,\tau,z)dz 
	= -\sum_{\ell \in \Gamma \backslash \Iso(V)}\int_{iT}^{iT + \alpha_{\ell}}G_{\ell}(z)\sum_{\gamma \in \overline{\Gamma}}\xi_{2k+2}\eta(\sigma_{\ell}^{-1}\gamma X,\tau,z)dz,
	\end{align}

\begin{align}\label{eq unwinded boundary integral 2}
\int_{\partial \calF(\Gamma)_{T}}\xi_{2k+2}G(z)\sum_{\gamma \in \overline{\Gamma}}\eta(\gamma X,\tau,z)dz 
		= -\sum_{\ell \in \Gamma \backslash \Iso(V)}\int_{iT}^{iT + \alpha_{\ell_{X}}}\xi_{2k+2}G_{\ell}(z)\sum_{\gamma \in \overline{\Gamma}}\eta(\sigma_{\ell}^{-1}\gamma X,\tau,z)dz,
	\end{align}
	where $G_{\ell} = G|_{2k+2}\sigma_{\ell}$. It can be seen as in the proof of \cite{brfu06}, Lemma 5.2, that in the limit only the summands for $\ell = \ell_{X}, \gamma \in \overline{\Gamma}_{\ell_{X}},$ and $\ell = \ell_{-X},\gamma \in \overline{\Gamma}_{\ell_{-X}},$ survive. Replacing $X$ by $-X$ in the part corresponding to $\ell_{-X}$ gives a factor $(-1)^{k+1}$ and changes $h$ to $-h$. In particular, it suffices to compute the part corresponding to $\ell_{X}$. We determine the asymptotic behaviour of $\sum_{\gamma \in \overline{\Gamma}}\xi_{2k+2}\eta(\sigma_{\ell}^{-1}\gamma X,\tau,z)$ and $\sum_{\gamma \in \overline{\Gamma}}\eta(\sigma_{\ell}^{-1}\gamma X,\tau,z)$.
	
	\begin{prop}\label{prop:firstboundaryintegral}
		Suppose that $m/N$ is a square. Let $X \in L_{-m,h}$, let $\ell_{X}$ be the cusp associated to $X$, and let $r_{\ell_{X}}$ be the real part of the geodesic $c(X)$. For $r_{\ell_{X}} < \Re(\sigma_{\ell_{X}}z) < r_{\ell_{X}}+\alpha_{\ell_{X}}$ we have
		\begin{align*}
			&\left(2\sqrt{mN}\right)^{-k}\sqrt{N}^{-1}\sum_{\gamma \in \overline{\Gamma}_{\ell_{X}}}\xi_{2k+2}\eta(\sigma_{\ell_{X}}^{-1}\gamma X,\tau,z) \\
			&= \frac{\alpha_{\ell_{X}}^{k}}{k+1}B_{k+1}\left(\frac{z-r_{_{\ell_{X}}}}{\alpha_{\ell_{X}}}\right) - \frac{(iy)^{k+1}}{\alpha_{\ell_{X}}(k+1)}\frac{H_{k+1}\left(2\sqrt{\pi mv}\right) }{\left(4\sqrt{\pi mv}\right)^{k+1}}+
			\sum_{\substack{n \in \frac{1}{\alpha_{\ell_{X}}}\Z\setminus \{0\}}}g_{n}(y)e^{-2\pi in(x-r_{\ell_{X}})},
		\end{align*}
		where $B_{n}(x) = -n\zeta(1-n,x)$ is the $n$-th Bernoulli polynomial, $H_{n}(x) = (-1)^{n}e^{x^{2}}\frac{d^{n}}{dx^{n}}e^{-x^{2}}$ is the $n$-th Hermite polynomial, and $g_{n}(y) = O(e^{-Cy^{2}})$ as $y \to \infty$, for some constant $C > 0$.
	\end{prop}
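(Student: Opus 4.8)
I would prove this by first \emph{unfolding} the sum over the stabilizer $\overline{\Gamma}_{\ell_X}$ into a sum of translates along $\alpha_{\ell_X}\Z$, and then computing the Fourier expansion of the resulting $\alpha_{\ell_X}$-periodic function of $x = \Re z$ by Poisson summation.

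\textbf{Step 1 (unfolding and normalization).} Since $\sigma_{\ell_X}^{-1}\overline{\Gamma}_{\ell_X}\sigma_{\ell_X} = \langle\pm\left(\begin{smallmatrix}1 & \alpha_{\ell_X}\\0&1\end{smallmatrix}\right)\rangle$, since $-I$ acts trivially on $V$, and using the transformation rules \eqref{zTransformationRules} together with the explicit formula for $\xi_{2k+2}\eta$ from the lemma above, one first obtains $\sum_{\gamma\in\overline{\Gamma}_{\ell_X}}\xi_{2k+2}\eta(\sigma_{\ell_X}^{-1}\gamma X,\tau,z) = \sum_{j\in\Z}\xi_{2k+2}\eta\big(\sigma_{\ell_X}^{-1}X,\tau,z-j\alpha_{\ell_X}\big)$. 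Inserting the normal form $\sigma_{\ell_X}^{-1}X = \sqrt{m/N}\left(\begin{smallmatrix}-1 & 2r_{\ell_X}\\0&1\end{smallmatrix}\right)$ recalled in Section~2, so that $Q_{\sigma_{\ell_X}^{-1}X}(w) = 2\sqrt{mN}(w-r_{\ell_X})$ and $p_w(\sigma_{\ell_X}^{-1}X) = -\tfrac{2\sqrt m}{\Im w}(\Re w - r_{\ell_X})$, then substituting $z\mapsto\alpha_{\ell_X}z + r_{\ell_X}$ and dividing by the prefactor in the statement, the claim is reduced to the identity: for $0<x<1$ and with $b := 2\sqrt{\pi m v}$,
\begin{align*}
\tfrac12\sum_{j\in\Z}(z-j)^k\big(-\sgn(x-j)\big)\erfc\!\left(\tfrac{b|x-j|}{y}\right) &= \tfrac{1}{k+1}B_{k+1}(z) - \tfrac{(iy)^{k+1}}{k+1}\tfrac{H_{k+1}(b)}{(2b)^{k+1}} \\ &\quad + \sum_{n\in\Z\setminus\{0\}}g_n(y)e^{-2\pi inx}, \qquad g_n(y) = O(e^{-Cy^2}).
\end{align*}

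\textbf{Step 2 (Poisson summation and reduction to a classical integral).} The left-hand side is $\sum_{j\in\Z}h(x-j)$, where $h(t) = \tfrac12(t+iy)^k(-\sgn t)\erfc(b|t|/y)$ is piecewise smooth (with a jump at $t=0$) and decays faster than any exponential; hence for $0<x<1$ it agrees, together with its Fourier series, with $\sum_{n\in\Z}\widehat h(n)e^{-2\pi inx}$, where $\widehat h(n) = \int_\R h(t)e^{2\pi int}\,dt$. Expanding $(t+iy)^k = \sum_l\binom kl t^l(iy)^{k-l}$ reduces $\widehat h(n)$ to the classical integrals $I_l(p) = \int_0^\infty t^l\erfc(ct)e^{-pt}\,dt = (-1)^l I_0^{(l)}(p)$ with $c = b/y$ and $I_0(p) = \tfrac1p\big(1 - e^{p^2/4c^2}\erfc(\tfrac{p}{2c})\big)$, evaluated at $p = \pm 2\pi in$. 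Splitting $I_0(p) = \tfrac1p - B(p)$ with $B(p) = \tfrac1p e^{p^2/4c^2}\erfc(\tfrac{p}{2c})$, the polar part $\tfrac1p$ contributes to $\widehat h(n)$, for $n\neq 0$, exactly $-i^{k+1}k!\sum_{j=0}^k\frac{(2\pi ny)^j}{j!(2\pi n)^{k+1}}$, which by Lemma~\ref{lm:evalB} (applied to $\int_{iy}^{iy+1}B_{k+1}(w)e^{2\pi inw}\,dw$) is precisely the $e^{-2\pi inx}$-Fourier coefficient of $\tfrac{1}{k+1}B_{k+1}(z)$ on the strip $0<x<1$; for $n=0$ the polar part is absent, and the Gaussian moments $\int_0^\infty t^l\erfc(ct)\,dt = \tfrac{\Gamma((l+2)/2)}{(l+1)\sqrt\pi}c^{-(l+1)}$ together with the duplication formula $\Gamma(m+\tfrac12) = \tfrac{(2m)!}{4^m m!}\sqrt\pi$ and the explicit coefficients of $H_{k+1}$ yield $\widehat h(0) = \tfrac{(iy)^{k+1}}{k+1}\big(1-\tfrac{H_{k+1}(b)}{(2b)^{k+1}}\big)$, which is the $n=0$ Fourier coefficient of $\tfrac1{k+1}B_{k+1}(z) - \tfrac{(iy)^{k+1}}{k+1}\tfrac{H_{k+1}(b)}{(2b)^{k+1}}$. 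It therefore only remains to show that the contribution of $B(p)$ at $p = \pm 2\pi in$, $n\neq 0$, is the required error term $g_n(y) = O(e^{-Cy^2})$; summing these over $n$ gives a function of size $O(e^{-Cy^2})$ uniformly in $x$ (the Gaussian factors decay geometrically in $n$), which will complete the proof.

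\textbf{Main obstacle.} The delicate point is exactly this last claim about the $B$-part. On the imaginary axis $e^{p^2/4c^2}\erfc(p/2c)$ is \emph{not} Gaussian-small: since $\erfc$ of a purely imaginary argument grows like $e^{(\cdot)^2}$, the quantities $B(\pm 2\pi in)$ (and their $p$-derivatives) are individually only $O(1/y)$ as $y\to\infty$, not square-exponentially small. The point is that the combination actually entering $\widehat h(n)$ is, up to sign, $\big[(B(p)-B(-p))^{(l)}\big]_{p=2\pi in}$, and the reflection identity $\erfc(w)+\erfc(-w)=2$ gives $B(p)-B(-p) = \tfrac{2}{p}e^{p^2/4c^2}$, \emph{all} of whose $p$-derivatives at $p = 2\pi in$ carry the factor $e^{(2\pi in)^2/4c^2} = e^{-\pi^2 n^2 y^2/b^2}$ (and only polynomially growing factors besides), hence are $O(e^{-Cy^2})$ for $n\neq 0$. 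Making this cancellation manifest for all $l$ (including the derivative terms), and carefully tracking the passage from the normalized identity back to arbitrary width $\alpha_{\ell_X}$ and real part $r_{\ell_X}$ of $c(X)$, is where the real bookkeeping lies. (Alternatively one can follow the strategy of \cite{bif}, Proposition~7.6: differentiate the left side in $b$, which turns the sum over $j$ into a Gaussian theta series whose nonzero Fourier modes are Gaussian-small by Poisson summation directly, and then integrate back up in $b$ from $b=0$ — where the Fourier coefficients of the series, suitably regularized, reproduce the Bernoulli coefficients — using the Gaussian factor to bound the remainder.)
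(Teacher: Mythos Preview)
Your argument is correct, and it is genuinely different from the paper's proof. The paper does \emph{not} apply Poisson summation to the full sum. Instead it writes $\erfc(\sqrt{\pi v}|p_z(X)|)$ via its integral representation, introduces a regularizing parameter $s$ in the integrand, and splits $\int_1^\infty=\int_0^\infty-\int_0^1$. The $\int_0^\infty$ piece is evaluated as a Hurwitz zeta value (after expanding $(z/\alpha+n)^k$ binomially), and the identity $\zeta(-j,w)=-B_{j+1}(w)/(j+1)$ together with $B_n(a+b)=\sum_j\binom nj a^{n-j}B_j(b)$ yields the term $\frac{\alpha^k}{k+1}B_{k+1}(z/\alpha)$. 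Only the $\int_0^1$ piece is treated by Poisson summation; the Fourier transform of $t(t+iy)^k e^{-a^2t^2}$ is computed in terms of Hermite polynomials, the $n=0$ mode gives the $H_{k+1}$ term (after using $H_n(a+b)=\sum_j\binom nj(2a)^{n-j}H_j(b)$), and the $n\neq 0$ modes are manifestly $O(e^{-Cy^2})$ because the Gaussian survives the transform.

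Your route---Poisson summation on the whole sum, the closed Laplace transform $\int_0^\infty\erfc(ct)e^{-pt}\,dt=\tfrac1p\big(1-e^{p^2/4c^2}\erfc(\tfrac{p}{2c})\big)$, and the splitting $I_0(p)=\tfrac1p-B(p)$---is more elementary: it avoids the auxiliary $s$ and the analytic continuation of $\zeta(s,w)$ altogether, and Lemma~\ref{lm:evalB} identifies the $1/p$-contribution with the Fourier coefficients of $\tfrac1{k+1}B_{k+1}(z)$ directly. The one place the paper's approach is cleaner is the error term: after Poisson summation in the $\int_0^1$ piece the Gaussian factor $e^{-\pi^2 n^2 y^2/4\pi m v}$ is visible from the outset, whereas you must invoke the reflection identity $\erfc(w)+\erfc(-w)=2$ to see that the relevant combination $B(p)-B(-p)=\tfrac2p e^{p^2/4c^2}$ carries the factor $e^{-\pi^2 n^2 y^2/b^2}$ at $p=\pm 2\pi in$. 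Your observation here is correct and is exactly the combination that appears (I checked: the $B$-contribution to $\widehat h(n)$ is a linear combination of derivatives of $B(p)-B(-p)$ evaluated at $p=2\pi in$), so the claimed decay holds. The remaining bookkeeping---undoing the substitution $z\mapsto\alpha_{\ell_X}z+r_{\ell_X}$ and restoring the factor $(2\sqrt{mN})^k\sqrt N$---is routine.
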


	\begin{proof}
	Let $\ell = \ell_{X},\alpha = \alpha_{\ell_{X}}$ and $r = r_{\ell_{X}}$ for brevity. We have
	\[
	X' = \sigma_{\ell}^{-1}X = \sqrt{\frac{m}{N}}\begin{pmatrix}1 & -2r\\ 0 & -1 \end{pmatrix}, \qquad \overline{\Gamma}_{\ell_{X'}} = \sigma_{\ell}^{-1}\overline{\Gamma}_{\ell}\sigma_{\ell}=\left\{\begin{pmatrix}1 &\alpha n \\ 0 & 1\end{pmatrix}: n\in \Z\right\},
	\]
	so that $c_{X'} = \{z \in \H: \Re(z) = r\}$ is a vertical geodesic, and we can write
	\begin{align*}
	&\sum_{X \in \overline{\Gamma}_{\ell}}\xi_{2k+2}\eta(\sigma_{\ell}^{-1}\gamma X ,\tau,z) = \sum_{n \in \Z}\xi_{2k+2}\eta\left(\sqrt{\frac{m}{N}}\begin{pmatrix}-1 & -2(\alpha n -r) \\ 0 & 1\end{pmatrix},\tau,z\right)\\
	&\quad= -\left(2\sqrt{mN}\right)^{k+1}\frac{\sqrt{v}}{y}\sum_{n\in\Z}\left(z+\alpha n -r\right)^{k}(x+\alpha n -r)\int_{1}^{\infty}e^{-4\pi mv\frac{(x+\alpha n - r)^{2}}{y^{2}}t^{2}}dt.
	\end{align*}
	Replacing $z$ by $z+r$ we can assume $r = 0$. 
	For $s \in \C$ with $\Re(s) > -1-k$ we consider
	\begin{align}
	&-\frac{\sqrt{v}}{y}\sum_{n \in \Z}(x+\alpha n)(z+\alpha n)^{k}\int_{0}^{\infty}e^{-4\pi mv\frac{(x+\alpha n)^{2}}{y^{2}}t^{2}}t^{s}dt \label{line 1} \\
	&+\frac{\sqrt{v}}{y}\sum_{n \in \Z}(x+\alpha n)(z+\alpha n)^{k}\int_{0}^{1}e^{-4\pi m v\frac{(x+\alpha n)^{2}}{y^{2}}t^{2}}t^{s}dt. \label{line 2}
	\end{align}
	The first line can be computed as
	\begin{align*}
	-2^{-s-2}v^{-s/2}y^{s}(\pi|m|)^{-(s+1)/2}\Gamma\left( \frac{s+1}{2}\right)\alpha^{k-s}\sum_{n \in \Z}\frac{\sgn\left(\frac{x}{\alpha}+n\right)\left(\frac{z}{\alpha}+ n\right)^{k}}{\left|\frac{x}{\alpha}+n\right|^{s}}.
	\end{align*}
	Further, we write (assuming that $0 < x < \alpha$)
	\begin{align*}
	\sum_{n \in \Z}\frac{\sgn\left(\frac{x}{\alpha}+n\right)\left(\frac{z}{\alpha}+ n\right)^{k}}{\left|\frac{x}{\alpha}+n\right|^{s}} = \sum_{n \geq 0}\frac{\left(\frac{z}{\alpha}+ n\right)^{k}}{\left(\frac{x}{\alpha}+n\right)^{s}} + (-1)^{k+1}\sum_{n \geq 0}\frac{\left(1-\frac{z}{\alpha}+ n\right)^{k}}{\left(1-\frac{x}{\alpha}+n\right)^{s}}.
	\end{align*}
	Expanding the numerator, we find
	\begin{align*}
	\sum_{n \geq 0}\frac{\left(\frac{z}{\alpha}+ n\right)^{k}}{\left(\frac{x}{\alpha}+n\right)^{s}} 
	&= \sum_{j=0}^{k}\binom{k}{j}\left( \frac{z-x}{\alpha}\right)^{k-j}\sum_{n\geq 0}\frac{1}{\left(\frac{x}{\alpha}+n\right)^{s-j}} = \sum_{j=0}^{k}\binom{k}{j}\left( \frac{iy}{\alpha}\right)^{k-j}\zeta\left(s-j,\frac{x}{\alpha}\right).
	\end{align*}
	The Hurwitz zeta function has a meromorphic continuation to $\C$ which is holomorphic at all non-positive integers, so we can just plug in $s = 0$. Let $B_{n}(x)$ be the $n$-th Bernoulli polynomial. Using the relations $\zeta(-j,w) = -B_{j+1}(w)/(j+1)$ for $j \geq 0$ and $B_{n}(a+b) = \sum_{j = 0}^{n}\binom{n}{j}a^{n-j}B_{j}(b)$ we further obtain
	\begin{align*}
	\sum_{n \geq 0}\frac{\left(\frac{z}{\alpha}+ n\right)^{k}}{\left(\frac{x}{\alpha}+n\right)^{s}}\bigg|_{s = 0}&= -\frac{1}{k+1}\sum_{j=0}^{k}\binom{k+1}{j+1}\left( \frac{iy}{\alpha}\right)^{k-j}B_{j+1}\left(\frac{x}{\alpha}\right) \\
	&= -\frac{1}{k+1}B_{k+1}\left(\frac{z}{\alpha}\right) + \frac{1}{k+1}\left(\frac{iy}{\alpha} \right)^{k+1}.
	\end{align*}
	Since $B_{j}(1-w) = (-1)^{j}B_{j}(w)$ we see that the second part with $1-\frac{z}{\alpha}$ gives exactly the same contribution, so we finally obtain for \eqref{line 1} evaluated at $s = 0$ the expression
	\begin{align*}
	\frac{\alpha^{k}}{2\sqrt{m}}\left(\frac{1}{k+1}B_{k+1}\left(\frac{z}{\alpha}\right) - \frac{1}{k+1}\left(\frac{iy}{\alpha} \right)^{k+1} \right).
	\end{align*}

	In the second line \eqref{line 2} above we replace $w = 1/t$ to get
	\begin{align*}
	\frac{\sqrt{v}}{y}\int_{1}^{\infty}\sum_{n \in \Z}(x+\alpha n)(z+\alpha n)^{k}e^{-4\pi m v\frac{(x+\alpha n)^{2}}{y^{2}w^{2}}}w^{-s-2}dw.
	\end{align*}
	For a function $g(t)$ on $\R$ we let $\hat{g}(\xi) = \int_{-\infty}^{\infty}g(t)e^{2\pi i t\xi}dt$ its Fourier transform. By Poisson summation, we can rewrite the sum as
	\begin{align*}
	\sum_{n \in \Z}(x+\alpha n)(z+\alpha n)^{k}e^{-4\pi mv\frac{(x+\alpha n)^{2}}{y^{2}w^{2}}}=\frac{1}{\alpha}\sum_{n \in \frac{1}{\alpha}\Z}e^{-2\pi inx}\int_{-\infty}^{\infty}t(t+iy)^{k}e^{-4\pi mv\frac{t^{2}}{y^{2}w^{2}}}e^{2\pi i nt}dt.
	\end{align*}
	We set $a = \frac{2\sqrt{\pi mv}}{yw}$ for the moment. Then the Fourier transform of
		$
		h_{k}(t) = t(t+iy)^{k}e^{-a^{2}t^{2}}
		$
		is given by
		\[
		\hat{h}_{k}(\xi) = \frac{\sqrt{\pi}}{a}\sum_{j=0}^{k}\binom{k}{j}(iy)^{k-j}\left(\frac{i}{2a}\right)^{j+1}H_{j+1}\left(\frac{\pi}{a}\xi\right)e^{-\frac{\pi^{2}\xi^{2}}{a^{2}}},
		\]
		which can be checked by a straightforward calculation using the rules
		$
		\widehat{e^{-a^{2}\xi^{2}}} = \frac{\sqrt{\pi}}{a}e^{-\pi^{2}\xi^{2}/a^{2}}$ and $ \widehat{t^{n}f(t)}(\xi) = (2\pi i)^{-n}\frac{d^{n}}{d\xi^{n}}\hat{f}(\xi)
		$.

	We find that \eqref{line 2} equals
	\begin{align*}
	&\frac{(iy)^{k+1}}{\alpha}\frac{1}{2\sqrt{m}}\sum_{n \in \frac{1}{\alpha}\Z}e^{-2\pi inx}\sum_{j=0}^{k}\binom{k}{j}\left(\frac{1}{4\sqrt{\pi mv}}\right)^{j+1} \int_{1}^{\infty}H_{j+1}\left(\frac{\sqrt{\pi}yw}{2\sqrt{mv}}n\right)e^{-\frac{\pi^{2}ny^{2}w^{2}}{4\pi mv}}w^{j-s}dw.
	\end{align*}
	For $n \neq 0$ the integral in the last line is holomorphic at $s = 0$ and square exponentially decreasing as $y \to \infty$, and for $n =0$ and $\Re(s) \gg 0$ it can be evaluated as $-(j+1-s)^{-1}H_{j+1}(0)$.

	We can now plug in $s = 0$. Using the identity $H_{n}(a+b) = \sum_{j=0}^{n}\binom{n}{j}(2a)^{n-j}H_{j}(b)$ we write
	\begin{align*}
	\sum_{j=0}^{k}\binom{k}{j}\left(\frac{1}{4\sqrt{\pi mv}}\right)^{j+1}\frac{H_{j+1}(0)}{j+1}
	&= \frac{1}{k+1}\frac{H_{k+1}\left( 2\sqrt{\pi mv}\right)}{\left(4\sqrt{\pi mv}\right)^{k+1}}-\frac{1}{k+1}.
	\end{align*}
	This finishes the proof.
	\end{proof}

	\begin{prop}\label{prop:secondboundaryintegral}
		Suppose that $m/N$ is a square. Let $X \in L_{-m,h}$, let $\ell_{X}$ be the cusp associated to $X$, and let $r_{\ell_{X}}$ be the real part of the geodesic $c(X)$. For $r_{\ell_{X}} < \Re(\sigma_{\ell_{X}}z) < r_{\ell_{X}}+\alpha_{\ell_{X}}$ and $k > 0$ we have
		\begin{align*}
		&\left(2\sqrt{mN}\right)^{-k}\sqrt{N}^{-1}\sum_{\gamma \in \overline{\Gamma}_{\ell_{X}}}\eta(\sigma_{\ell_{X}}^{-1}\gamma X,\tau,z) \\
		&= (-1)^{k}i\frac{1}{\alpha_{\ell_{X}}^{k+1}}\frac{k!}{(2k+1)!}\sum_{d = 0}^{k}\frac{(d+k)!}{(d+1)!}B_{d+1}\left(\frac{x-r_{\ell_{X}}}{\alpha_{\ell_{X}}}\right)\left(-\frac{iy}{\alpha_{\ell_{X}}} \right)^{-k-d-1} \sum_{j=0}^{k-d}\binom{2k+1}{j}\\
		&\quad + (-1)^{k}i\frac{2^{2k-1}}{\alpha_{\ell_{X}}^{k+1}}\frac{k!}{(2k+1)!}\left(\psi^{(k)}\left(\frac{z-r_{\ell_{X}}}{\alpha_{\ell_{X}}}\right)+(-1)^{k}\psi^{(k)}\left(1-\frac{z-r_{\ell_{X}}}{\alpha_{\ell_{X}}}\right) \right)\\
		&\quad -\frac{\pi}{\left(2\sqrt{mN}\right)^{k}\sqrt{N}\alpha_{\ell_{X}}}\left( \frac{i\sqrt{N}}{4\pi \sqrt{m}}\right)^{k+1}v^{-k-1/2}\beta_{3/2+k}^{c}(-4\pi m v)\sum_{\substack{n \in \frac{1}{\alpha_{\ell_{X}}}\Z \\ n < 0}}(4\pi n)^{k}e^{-2\pi i n(z-r_{\ell_{X}})}\\
		&\quad + \sum_{n \in \frac{1}{\alpha_{\ell_{X}}}\Z \setminus \{0\}}g_{n}(y)e^{-2\pi i n (x-r_{\ell_{X}})} + h(y)
		\end{align*}
		where $\beta^c_m(s):=\int_0^1 e^{-st}t^{-m}dt$, $g_{n}(y) = O(e^{-Cy^{2}})$ and $h(y) = O(y^{-k}\log(y))$ as $y\to \infty$, for some constant $C > 0$. 
		
		For $k = 0$ the same formula holds with $h(y)$ replaced by
		\begin{align*}
		\frac{i}{\alpha_{\ell_{X}}}\mathcal{F}(4\pi m v)
		-\frac{i}{\alpha_{\ell_{X}}}\log\left(\frac{y}{\alpha_{\ell_{X}}} \right),
		\end{align*}
		where $\mathcal{F}(w)$ is the function defined in Theorem \ref{theorem fourier expansion shintani}.
	\end{prop}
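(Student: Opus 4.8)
The plan is to run the argument in parallel with the proof of Proposition~\ref{prop:firstboundaryintegral}, replacing the Schwartz function $\xi_{2k+2}\eta$ by $\eta$ itself; the one essential difference is that $\eta$ carries an incomplete Gamma function rather than a single complementary error function, which will generate two families of boundary terms instead of one. I would write $\ell = \ell_X$, $\alpha = \alpha_{\ell_X}$, $r = r_{\ell_X}$, and after the substitution $z \mapsto z+r$ assume $r = 0$. As in the earlier proof, $\sigma_\ell^{-1}\gamma X = \sqrt{m/N}\left(\begin{smallmatrix} -1 & -2\alpha n \\ 0 & 1\end{smallmatrix}\right)$ as $\gamma$ ranges over $\overline{\Gamma}_{\ell_X}$, i.e. over $n \in \Z$, so that $Q_{\sigma_\ell^{-1}\gamma X}(z) = 2\sqrt{mN}\,(z + \alpha n)$, $p_z(\sigma_\ell^{-1}\gamma X) = -2\sqrt{m}\,(x+\alpha n)/y$, $(\sigma_\ell^{-1}\gamma X, \sigma_\ell^{-1}\gamma X) = -2m$, and hence $R(\sigma_\ell^{-1}\gamma X,z) = 2m|z+\alpha n|^2/y^2$ and $(\sigma_\ell^{-1}\gamma X, \sigma_\ell^{-1}\gamma X) + R(\sigma_\ell^{-1}\gamma X,z) = \tfrac12 p_z(\sigma_\ell^{-1}\gamma X)^2$. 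Starting from \eqref{eq eta alternative formula}, I would rewrite the inner $t$-integral as $e^{-2\pi v(X,X)w^2}(2\pi vw^2)^{-k-1}\Gamma(k+1, 2\pi vw^2 R(X,z))$, expand $\Gamma(k+1,x) = k!\,e^{-x}\sum_{j=0}^{k}x^j/j!$, and be left with a finite sum over $j$ of $n$-series with general term a constant times $(z+\alpha n)^{-k-1}R(X,z)^j\int_1^\infty e^{-\pi v p_z(X)^2 w^2}w^{2j-2k-2}\,dw$. As in Proposition~\ref{prop:firstboundaryintegral} I would insert a parameter $s$ through the factor $w^{s}$ in the $w$-integral (legitimate since the final answer is holomorphic near $s=0$) and split $\int_1^\infty = \int_0^\infty - \int_0^1$.

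For the $\int_0^\infty$ piece I would interchange summation and integration for $\Re(s)$ large and evaluate the Gaussian integral, which is a Gamma value times a power of $p_z(X)^2 = 4m(x+\alpha n)^2/y^2$. Using $|z+\alpha n|^{2j} = (z+\alpha n)^j(\bar z+\alpha n)^j$ and the binomial theorem to peel off the powers of $iy/\alpha$, the $n$-sum becomes a finite linear combination of Hurwitz zeta functions $\zeta(s\pm j, x/\alpha)$, which I would continue analytically and evaluate at $s=0$. Here the incomplete Gamma expansion produces exactly the sums $\sum_{j=0}^k\frac{(-1)^j}{j!}\E_{2k+2-j}(\cdot)$ already met in Lemma~\ref{lemma regularized cycle integral}, and I would invoke the identity displayed just before Lemma~\ref{lm:evalB} to split them into an $\E_{k+1}$-contribution and $y$-power contributions. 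The $\E_{k+1}$-contribution corresponds to the values $\zeta(j+1,w) = \tfrac{(-1)^{j+1}}{j!}\psi^{(j)}(w)$ of the Hurwitz zeta function at positive integers and, after combining with its reflection at $1-w$, assembles into the polygamma term $(-1)^k i\,\alpha^{-k-1}2^{2k-1}\tfrac{k!}{(2k+1)!}\big(\psi^{(k)}(z/\alpha)+(-1)^k\psi^{(k)}(1-z/\alpha)\big)$; the $y$-power contributions correspond to the values $\zeta(-j,w) = -B_{j+1}(w)/(j+1)$ at non-positive integers and, via $B_n(a+b) = \sum_i\binom ni a^{n-i}B_i(b)$ and $B_n(1-w) = (-1)^n B_n(w)$ exactly as before, assemble into the Bernoulli-polynomial term with the combinatorial factor $\tfrac{k!}{(2k+1)!}\sum_{d=0}^{k}\tfrac{(d+k)!}{(d+1)!}\sum_{j=0}^{k-d}\binom{2k+1}{j}$. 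The $n=0$ summand is exceptional: for $k>0$ its Laurent expansion at $s=0$ contributes the remainder $h(y) = O(y^{-k}\log y)$, and for $k=0$ the simple pole of $\zeta(s,x/\alpha)$ together with the $\Gamma$-factor degenerates into the logarithm $-\tfrac{i}{\alpha}\log(y/\alpha)$ and into the combination of $\erfc$- and $\int_0^{\sqrt w}e^{t^2}\erfc(t)\,dt$-integrals that defines the function $\mathcal{F}$ of Theorem~\ref{theorem fourier expansion shintani}, whence the term $\tfrac{i}{\alpha}\mathcal{F}(4\pi mv)$.

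For the $\int_0^1$ piece I would apply Poisson summation in $n$. All Fourier-dual frequencies decay square-exponentially as $y \to \infty$ and are holomorphic at $s = 0$ except for the part of the dual sum that decays only like $e^{2\pi ny}$ with $n<0$; the former are absorbed into the error functions $g_n(y) = O(e^{-Cy^2})$, while the latter, after evaluating the $w$-integral over $[0,1]$ and recognizing $\int_0^1 e^{-vt}t^{-3/2-k}\,dt = \beta^c_{3/2+k}(v)$, organizes into the term $-\tfrac{\pi}{(2\sqrt{mN})^k\sqrt N\alpha}\big(\tfrac{i\sqrt N}{4\pi\sqrt m}\big)^{k+1}v^{-k-1/2}\beta^c_{3/2+k}(-4\pi mv)\sum_{n<0,\,n\in\frac1\alpha\Z}(4\pi n)^k e^{-2\pi inz}$. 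Collecting the three pieces, undoing the normalization $z\mapsto z+r$, and multiplying by $(2\sqrt{mN})^{-k}\sqrt N^{-1}$ should give the claimed identity.

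I expect the principal obstacle to be bookkeeping rather than any single idea: one must track several families of terms simultaneously, fix all signs (the choice of representative $\sigma_\ell^{-1}\gamma X$, the branch of $(z+\alpha n)^{-k-1}$, and the orientation of $c_X$ carried over from the definition of the real part $r_{\ell_X}$), verify uniformity of the error estimates in $x$ on the strip $r < \Re(\sigma_\ell z) < r+\alpha$, and above all handle the degenerate case $k=0$, where the orderly $\int_0^\infty$ computation collapses and the transcendental function $\mathcal{F}$ must be extracted from the constant term at $s=0$ of the $n=0$ summand. The auxiliary combinatorial identity before Lemma~\ref{lm:evalB}, which powers the regrouping, is itself most easily checked by differentiating both sides, as indicated there.
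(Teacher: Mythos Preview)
Your overall architecture matches the paper's: reduce to $r=0$, insert the regularizing factor $w^{s}$, split $\int_{1}^{\infty}=\int_{0}^{\infty}-\int_{0}^{1}$, handle the first piece via Hurwitz zeta functions and the second via Poisson summation. Two points deserve comment.

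First, the paper integrates in the opposite order from you: it evaluates the Gaussian $w$-integral first, obtaining $\Gamma\big(\tfrac{s+1}{2}\big)(t-2m)^{-(s+1)/2}$, and only then does the $t$-integral by substituting $t\mapsto t+2m$ and expanding $(t+2m)^{k}$ binomially. This gives $n$-sums of the clean shape $|x+\alpha n|^{2j+1-s}/(z+\alpha n)^{k+1}$, which are then unwound inductively via the identity
\[
\sum_{n\geq 0}\frac{1}{\big(\tfrac{z}{\alpha}+n\big)\big(\tfrac{x}{\alpha}+n\big)^{-m+s-1}}
=\sum_{d=0}^{m}\Big(-\tfrac{iy}{\alpha}\Big)^{d}\zeta\big(s-m+d,\tfrac{x}{\alpha}\big)
+\Big(-\tfrac{iy}{\alpha}\Big)^{m+1}\sum_{n\geq 0}\frac{1}{\big(\tfrac{z}{\alpha}+n\big)\big(\tfrac{x}{\alpha}+n\big)^{s}},
\]
and then differentiated $k$ times in $y$. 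Your order (evaluate the $t$-integral to $\Gamma(k+1,\,\cdot\,)$, expand, then do the $w$-integral) produces $n$-sums involving $|z+\alpha n|^{2j}|x+\alpha n|^{2k+1-2j-s}/(z+\alpha n)^{k+1}$, which after expanding $|z+\alpha n|^{2j}=\sum_{i}\binom{j}{i}(x+\alpha n)^{2i}y^{2j-2i}$ leads to a double sum and more bookkeeping. Both routes should arrive at the same place, but the paper's is shorter.

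Second, and more importantly, the mechanism you name for the combinatorial regrouping is not the right one. The $\E_{2k+2-j}$-identity before Lemma~\ref{lm:evalB} is an identity between transcendental functions that powers Proposition~\ref{prop regularized cycle integrals alternative}; it does not appear here. What the paper actually uses are the values $\zeta(-j,w)=-B_{j+1}(w)/(j+1)$ and $\zeta(k+1-d,w)=\tfrac{(-1)^{k+1-d}}{(k-d)!}\psi^{(k-d)}(w)$, together with two combinatorial identities, namely
\[
\sum_{j=\lceil k/2\rceil}^{k}\binom{k}{j}\binom{2j-d}{k}\frac{(-1)^{j}}{2j+1}
=(-1)^{k}\frac{k!(d+k)!}{(2k+1)!\,d!}\sum_{j=0}^{k-d}\binom{2k+1}{j}
\qquad(0\leq d\leq k),
\]
proved by induction on $d$ using \cite[Lemma~14.1]{borcherds}, and $\sum_{j=0}^{k}\binom{k}{j}\tfrac{(-1)^{j}}{2j+1}=\tfrac{2^{2k}(k!)^{2}}{(2k+1)!}$; the same Borcherds lemma is used to show that all polygamma terms $\psi^{(k-d)}$ with $1\leq d\leq k$ cancel. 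That these produce the same coefficients as the $\E$-identity is of course no accident, but you should expect the argument to go through Hurwitz zeta values at integers, not through $\E$-functions.

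Finally, for $k=0$ note that the transcendental function $\mathcal{F}(4\pi mv)$ is assembled from \emph{both} halves: the $\log$-piece and the constants $\log 2-\tfrac12\Gamma'(1)$ come from the constant term at $s=0$ of the $\int_{0}^{\infty}$ contribution, while the $\erfc$-terms and the $\int_{0}^{\sqrt{w}}e^{t^{2}}\erfc(t)\,dt$ arise from the $n=0$ Fourier mode after Poisson summation in the $\int_{0}^{1}$ piece (via an integration by parts on $\int_{0}^{1}e^{4\pi mvw^{2}}\erfc(2\sqrt{\pi mv}\,w)w^{s-2}\,dw$).
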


	
	\begin{proof}
	We abbreviate $\ell = \ell_{X},\alpha = \alpha_{\ell_{X}}$ and $r = r_{\ell_{X}}$, and we write
	\begin{align*}
	\sum_{X \in \overline{\Gamma}_{\ell}}\eta(\sigma_{\ell}^{-1}\gamma X ,\tau,z) = \sum_{n \in \Z}\eta\left(\sqrt{\frac{m}{N}}\begin{pmatrix}-1 & -2(\alpha n -r) \\ 0 & 1\end{pmatrix},\tau,z\right).
	\end{align*}
	Using the formula \eqref{eq eta alternative formula} for $\eta(X,\tau,z)$, we explicitly have
	\begin{align*}
	&\eta\left(\sqrt{\frac{m}{N}}\begin{pmatrix}-1 & -2(\alpha n -r) \\ 0 & 1\end{pmatrix},\tau,z\right) \\
	&= -\frac{2^{k}N^{k+1}}{\big(2\sqrt{mN}(z+\alpha n -r)\big)^{k+1}}\sqrt{v}\int_{1}^{\infty}\int_{2m\frac{(x+\alpha n -r)^{2}}{y^{2}}+2m}^{\infty}t^{k}e^{-2\pi v (t-2m) w^{2}}dt \, dw.
	\end{align*}
	Replacing $z$ by $z+r$ we can assume that $r = 0$. For $s\in\C$ with $\Re(s)>0$ we consider
	\begin{align}
	&-2^{k}N^{k+1}\sqrt{v}\sum_{n \in \Z}\frac{1}{(z+\alpha n)^{k+1}}\int_{0}^{\infty}\int_{2m\frac{(x+\alpha n)^{2}}{y^{2}}+2m}^{\infty}t^{k}e^{-2\pi v (t-2m) w^{2}}dt \, w^{s} \, dw  \label{eq Lambda 1}
\\
	&+2^{k}N^{k+1}\sqrt{v}\sum_{n \in \Z}\frac{1}{(z+\alpha n)^{k+1}}\int_{0}^{1}\int_{2m\frac{(x+\alpha n)^{2}}{y^{2}}+2m}^{\infty}t^{k}e^{-2\pi v (t-2m) w^{2}}dt \, w^{s} \, dw.  \label{eq Lambda 2}
	\end{align}
	Although the difference is holomorphic at $s = 0$, we will see in the computations below that the functions in \eqref{eq Lambda 1} and \eqref{eq Lambda 1} both have a pole of first order at $s = 0$. Thus we compute their constant terms in the Laurent expansion at $s = 0$.
	
	The expression in \eqref{eq Lambda 1} can be evaluated as
	\begin{align*}
		-\frac{2^{k-1}N^{k+1}}{(2\pi v)^{s/2+1/2}}\sqrt{v}\Gamma\left(\frac{s+1}{2}\right) \sum_{n \in \Z}\frac{1}{(z+\alpha n)^{k+1}}\int_{2m\frac{(x+\alpha n)^{2}}{y^{2}}+2m}^{\infty}  t^k (t-2m)^{-s/2-1/2} dt.
	\end{align*}		
	Replacing $t$ by $t+2m$ and expanding $(t+2m)^{k}$ we get after a short calculation
	\begin{align}\label{eq Lambda 3}
		\frac{2^{k}N^{k+1}(2m)^{k-s/2+1/2}}{(2\pi v)^{s/2+1/2}}\sqrt{v}\Gamma\left(\frac{s+1}{2}\right) \sum_{j=0}^{k}\binom{k}{j}\frac{y^{-2j+s-1}}{2j-s+1}\sum_{n \in \Z}\frac{|x+\alpha n|^{2j-s+1}}{(z+\alpha n)^{k+1}}.
	\end{align}

	For $0 < x < \alpha$ we can write
	\begin{align*}
		&\sum_{n\in\Z}\frac{|x+\alpha n|^{2j-s+1}}{(z+\alpha n)^{k+1}} \\
		& =\alpha^{2j-s} \frac{i^k}{k!} \frac{\partial^k}{\partial y^k}\left( \sum_{n\geq 0} \frac{1}{\left(\frac{z}{\alpha}+ n\right)\left(\frac{x}{\alpha} + n\right)^{-2j+s-1}}- \sum_{n\geq 0} \frac{1}{\left(1-\frac{z}{\alpha}+ n\right)\left(1-\frac{x}{\alpha} +n\right)^{-2j+s-1}}\right).
	\end{align*}
	For $m \in \Z$ and $\Re(s) > m$ we have
	\[
	\sum_{n \geq 0}\frac{1}{\left(\frac{z}{\alpha}+n\right)\left(\frac{x}{\alpha}+n \right)^{-m+s-1}}-\sum_{n \geq 0}\frac{1}{\left(\frac{x}{\alpha}+n \right)^{-m+s}} = -\frac{iy}{\alpha}\sum_{n \geq 0}\frac{1}{\left(\frac{z}{\alpha}+n\right)\left(\frac{x}{\alpha}+n \right)^{-m+s}}.
	\]

	Using induction on $m \geq 0$ we find that this equals
\begin{align*}
		\sum_{d=0}^{m}\left(-\frac{iy}{\alpha}\right)^{d} \zeta\left(\frac{x}{\alpha},s-m+d\right) + \left(-\frac{iy}{\alpha}\right)^{m+1}\sum_{n \geq 0}\frac{1}{\left(\frac{z}{\alpha}+n\right)\left(\frac{x}{\alpha}+n \right)^{s}}.
	\end{align*}
	Taking the $k$-th derivative, we obtain 
	\begin{align*}
	&\sum_{n\in\Z}\frac{|x+\alpha n|^{2j-s+1}}{(z+\alpha n)^{k+1}}\\
	 &=\alpha^{2j-s-k} \sum_{d = k}^{2j}\binom{d}{k}\left(-\frac{iy}{\alpha}\right)^{d-k}\left(\zeta\left(\frac{x}{\alpha},s-2j+d \right)+(-1)^{d+1}\zeta\left(1-\frac{x}{\alpha},s-2j+d \right)\right) \\
	&\quad+\alpha^{2j-s-k}\sum_{d = 0}^{\min\{2j+1,k\}}\binom{2j+1}{d}\left(-\frac{iy}{\alpha}\right)^{2j+1-d} \\
	&\qquad \qquad \times \left(\sum_{n \geq 0}\frac{1}{\left(\frac{z}{\alpha}+n\right)^{k+1-d}\left( \frac{x}{\alpha}+n\right)^{s}}+(-1)^{k-d}\sum_{n \geq 0}\frac{1}{\left(1-\frac{z}{\alpha}+n\right)^{k+1-d}\left( 1-\frac{x}{\alpha}+n\right)^{s}}\right),
	\end{align*}
	where the first line on the right-hand side vanishes if $k > 2j$. In the Hurwitz zeta functions we can plug in $s = 0$ and obtain
	\[
	\zeta\left(\frac{x}{\alpha},-2j+d \right)+(-1)^{d+1}\zeta\left(1-\frac{x}{\alpha},-2j+d \right) = - 2\frac{B_{2j-d+1}\left(\frac{x}{\alpha}\right)}{2j-d+1}.
	\]
	The series in the last line converge at $s = 0$ for $0 \leq d \leq k-1$ to the value 
	\begin{align*}
	&\zeta\left(\frac{z}{\alpha},k+1-d\right)+(-1)^{k-d}\zeta\left(1-\frac{z}{\alpha},k+1-d\right) \\
	&= \frac{(-1)^{k+1-d}}{(k-d)!}\left(\psi^{(k-d)}\left(\frac{z}{\alpha}\right) +(-1)^{k-d}\psi^{(k-d)}\left(1-\frac{z}{\alpha}\right)\right).
	\end{align*}
	Further, since $\zeta(z,s)$ has a pole of first order at $s = 1$ with residue $1$ and constant term $-\psi(z)$, we see as in the proof of \cite{bif}, Lemma 8.5, that
	\[
	\sum_{n \geq 0}\frac{1}{\left(\frac{z}{\alpha}+n\right)\left( \frac{x}{\alpha}+n\right)^{s}} = \frac{1}{s} - \psi\left(\frac{z}{\alpha}\right) + O(s).
	\]
	Taking everything together, we obtain that the constant term at $s=0 $ of the expression in \eqref{eq Lambda 3} is given by
	\begin{align*}
	&2^{2k+1}N^{k+1}m^{k+1/2}\alpha^{-k-1}i\sum_{j=\lceil \frac{k}{2} \rceil}^{k}\binom{k}{j}\frac{(-1)^{j}}{2j+1}\sum_{d = k}^{2j}\binom{d}{k}\frac{B_{2j-d+1}\left(\frac{x}{\alpha}\right)}{2j-d+1}\left(-\frac{iy}{\alpha}\right)^{-2j+d-k-1}\\
	&-2^{2k}N^{k+1}m^{k+1/2}\alpha^{-k-1}i\sum_{j=0}^{k}\binom{k}{j}\frac{(-1)^{j}}{2j+1}\sum_{d = 0}^{\min\{2j+1,k\}}\binom{2j+1}{d}\left( -\frac{iy}{\alpha}\right)^{-d} \\
	&\qquad \qquad \qquad \qquad \qquad \qquad \qquad  \times  \frac{(-1)^{k+1-d}}{(k-d)!}\left(\psi^{(k-d)}\left(\frac{z}{\alpha}\right) +(-1)^{k-d}\psi^{(k-d)}\left(1-\frac{z}{\alpha}\right)\right) \\
	&- 2^{2k+1}N^{k+1}m^{k+1/2}\alpha^{-k-1}i\left( -\frac{iy}{\alpha}\right)^{-k}\sum_{j= \lceil \frac{k-1}{2}\rceil}^{k}\binom{k}{j}\binom{2j+1}{k}\frac{(-1)^{j}}{2j+1}\\
	&\qquad \qquad \qquad \qquad \qquad \qquad \qquad \times\left(\log\left( \frac{y}{2\sqrt{\pi m v}\alpha}\right)+\frac{1}{2}\psi\left(\frac{1}{2}\right) + \frac{1}{2j+1} \right).
	\end{align*}
	The last line greatly simplifies for $k = 0$, and for $k > 0$ it decays like $O(y^{-k}\log(y))$ as $y \to \infty$ and contributes to the function $h(y)$. 
	
	The first line can be reordered to
	\begin{align*}
	2^{2k+1}N^{k+1}m^{k+1/2}\alpha^{-k-1}i\sum_{d = 0}^{k}\frac{B_{d+1}\left(\frac{x}{\alpha}\right)}{d+1}\left(-\frac{iy}{\alpha} \right)^{-k-d-1}\sum_{j = \lceil \frac{k}{2}\rceil}^{k}\binom{k}{j}\binom{2j-d}{k}\frac{(-1)^{j}}{2j+1},
	\end{align*}
	and the sum over $j$ can be rewritten using the combinatorial identity
	\begin{align*}
	\sum_{j=\left\lceil \frac{k}{2}\right\rceil}^{k}\binom{k}{j}\binom{2j-d}{k} \frac{(-1)^{j}}{2j+1}= (-1)^{k}\frac{k!(d+k)!}{(2k+1)!d!} \sum_{j=0}^{k-d}\binom{2k+1}{j}
	\end{align*}
	for $0 \leq d \leq k$, which can be proven by induction on $d$ (going from $d = k$ to $d = 0$) and Lemma 14.1 in \cite{borcherds}.
	
	The second line can considerably be simplified. Reordering the summation we get
	\begin{align*}
	&-2^{2k}N^{k+1}m^{k+1/2}\alpha^{-k-1}i\sum_{d = 0}^{k}\left( -\frac{iy}{\alpha}\right)^{-d}\frac{(-1)^{k+1-d}}{(k-d)!}\left(\psi^{(k-d)}\left(\frac{z}{\alpha}\right) +(-1)^{k-d}\psi^{(k-d)}\left(1-\frac{z}{\alpha}\right)\right) \\
	& \qquad \qquad \qquad \qquad \qquad \qquad \qquad \qquad \qquad \qquad  \qquad\times\sum_{j=\lceil \frac{d-1}{2} \rceil}^{k}\binom{k}{j}\binom{2j+1}{d}\frac{(-1)^{j}}{2j+1}.
	\end{align*}
	Using Lemma 14.1 of \cite{borcherds} with $A = 2k-d+1, B = k-d$ and $C = k$ we see that the sum over $j$ vanishes for $1 \leq d \leq k$. For $d = 0$ the sum over $j$ can explicitly be evaluated as
	\[
	\sum_{j = 0}^{k}\binom{k}{j}\frac{(-1)^{j}}{2j+1} = \frac{2^{2k}(k!)^{2}}{(2k+1)!}.
	\]
	This finishes the computation of the constant term at $s = 0$ of the expression in \eqref{eq Lambda 1}.

	
	Using $\int_{C}^{\infty}t^{k}e^{-vt}dt = (-1)^{k}\frac{\partial^{k}}{\partial v^{k}}v^{-1}e^{-Cv}$, the expression in \eqref{eq Lambda 2} can be written as	
	\begin{align}\label{eq Lambda 4}
	\begin{split}
	&-\frac{2^{k}N^{k+1}\sqrt{v}}{(-2\pi)^{k+1}}\int_{0}^{1}e^{4\pi m vw^{2}}w^{s-2k-2} \\
	&\qquad \qquad \qquad \qquad \times \frac{\partial^{k}}{\partial v^{k}}\left(v^{-1}e^{-4 \pi m vw^{2}}\sum_{n \in \Z}\frac{1}{(z+\alpha n)^{k+1}}e^{-4\pi mv \frac{(x+\alpha n)^{2}}{y^{2}}w^{2}}\right) dw.
	\end{split}
	\end{align}

	Next, we would like to apply Poisson summation to the inner sum. The necessary Fourier transform has been computed in \cite{alfesschwagen}, Lemma 5.4. Since the explicit formula and the further computations are quite technical for $k > 0$, we restrict to the case $k = 0$ for simplicity. In this case the inner sum equals
	\begin{align*}
	\sum_{n \in \Z}\frac{1}{z+\alpha n}e^{-4\pi m v \frac{(x+\alpha n)^{2}}{y^{2}}w^{2}} &= \frac{1}{\alpha}\sum_{n \in \frac{1}{\alpha}\Z}e^{-2\pi i nx} \int_{-\infty}^{\infty}\frac{1}{t+iy}e^{-4\pi m v \frac{t^{2}}{y^{2}}w^{2}}e^{2\pi i n t } dt \\
	&= - \frac{i\pi}{\alpha}\sum_{n \in \frac{1}{\alpha}\Z}e^{-2\pi i n z}e^{4\pi m vw^{2}}\erfc\left(2\sqrt{\pi m v}w+ \frac{\pi n y}{2\sqrt{\pi m v}w}\right).
	\end{align*}
	Using $\erfc(-x) = 2-\erfc(x)$ and $\erfc(x) \leq e^{-x^{2}}$ for $x > 0$ we see that \eqref{eq Lambda 4} can be written as
	\begin{align}\label{eq Lambda 5}
	\begin{split}
	&-\frac{i N}{2\alpha\sqrt{v}}\int_{0}^{1}e^{4\pi m v w^{2}}\erfc\left(2\sqrt{\pi m v}w\right)w^{s-2}dw \\
	&-\frac{i N}{\alpha\sqrt{v}}\sum_{\substack{n \in \frac{1}{\alpha}\Z \\ n < 0}}e^{-2\pi i n z}\int_{0}^{1}e^{4\pi m v w^{2}}w^{s-2}dw + \sum_{n \in \frac{1}{\alpha}\Z\setminus \{0\}}\tilde{g}_{n}(y,s)e^{-2\pi i n x} ,
	\end{split}
	\end{align}
	where $\tilde{g}_{n}(y,s)$ is holomorphic at $s = 0$ and satisfies $\tilde{g}_{n}(y,0) = O(e^{-Cy^{2}})$ as $y \to \infty$, for some constant $C > 0$. The integral in the second line has a holomorphic continuation to $s = 0$ which equals the function $\frac{1}{2}\beta_{3/2}^{c}(-4\pi m v)$.

	Further, we use integration by parts to compute
	\begin{align*}
	&\int_{0}^{1}e^{4\pi m vw^{2}}\erfc\left(2\sqrt{\pi m v}w\right)w^{s-2}dw \\
	& \quad = \frac{1}{s-1}e^{4\pi m v}\erfc\left(2\sqrt{\pi m v}\right) - \frac{8\pi m v}{s-1}\int_{0}^{1}e^{-4\pi m vw^{2}}\erfc\left(2\sqrt{\pi m v}w\right)w^{s}dw+ \frac{4\sqrt{ m v}}{(s-1)s}.
	\end{align*}
	Taking the constant term at $s = 0$ in \eqref{eq Lambda 5} we arrive at
	\begin{align*}
	&-\frac{i N}{2\alpha \sqrt{v}}\left(-e^{4\pi m v}\erfc\left(2\sqrt{\pi m v}\right) + 8\pi m v \int_{0}^{1}e^{-4\pi m v w^{2}}\erfc\left(2\sqrt{\pi m v} w \right)dw - 4\sqrt{mv}\right) \\
	&-\frac{i N}{2\alpha \sqrt{v}}\sum_{\substack{n \in \frac{1}{\alpha}\Z \\ n < 0}}e^{-2\pi i n z}\beta_{3/2}^{c}(4\pi m v) + \sum_{n \in \frac{1}{\alpha}\Z\setminus \{0\}}\tilde{g}_{n}(y,0)e^{-2\pi i n x}.
	\end{align*}
	Collecting everything together, we finally obtain the stated formula.

	\end{proof}

	Using Propositions \ref{prop:firstboundaryintegral} and \ref{prop:secondboundaryintegral} we can now evaluate the integrals in \eqref{eq unwinded boundary integral 1} and \eqref{eq unwinded boundary integral 2}. We obtain the complementary trace function and a contribution to the regularized cycle integral in the form given in Proposition \ref{prop regularized cycle integrals alternative}. Further, the following lemma shows that the correction terms in the second line of \eqref{eq coefficients of positive index} cancel out nicely with suitable parts of the integrals in \eqref{eq unwinded boundary integral 1} and \eqref{eq unwinded boundary integral 2}.
	

	\begin{lem}[\cite{bif}, Lemma 8.2]
		For $m > 0$ we have
		\begin{align*}
		&\frac{(-\sqrt{2N}i)^{k}}{2\sqrt{m}(8\pi v)^{k/2}}H_{k}\left(2\sqrt{\pi mv}\right)\sum_{X \in \Gamma \backslash L_{-m,h}}\left(a^{\pm}_{\ell_{X}}(0) + (-1)^{k}a^{\pm}_{\ell_{-X}}(0)\right)\\
		&= \frac{1}{\sqrt{N}}\sum_{\ell \in \Gamma \backslash \Iso(V)}\varepsilon_{\ell}a^{\pm}_{\ell}(0)b_{\ell,k}(m,h).
		\end{align*}
	\end{lem}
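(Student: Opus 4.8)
The plan is to prove this by reading both sides as two different ways of summing a single quantity over $\Gamma$-classes of pairs ``(negative-norm vector, orthogonal isotropic line)''. First, both sides vanish unless $m/N$ is the square of a rational number: on the right this is \eqref{eq positive coefficients unary theta functions}, and on the left it holds because a vector $X\in L_{-m,h}$ has a well-defined associated cusp $\ell_X$ (equivalently, trivial $\overline{\Gamma}_X$) only when $|Q(X)|/N=m/N$ is a square. So assume $m/N=d^{2}$ with $d\in\Q_{>0}$, and abbreviate $C=\frac{(-\sqrt{N}i)^{k}}{(4\pi v)^{k/2}}H_{k}\bigl(2\sqrt{\pi m v}\bigr)$. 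Since $(-\sqrt{2N}i)^{k}/(8\pi v)^{k/2}=(-\sqrt{N}i)^{k}/(4\pi v)^{k/2}$, the prefactor on the left of the lemma equals $\frac{1}{2\sqrt m}C$, while by \eqref{eq positive coefficients unary theta functions} each $b_{\ell,k}(m,h)$ equals $C$ or $-C$ (according to whether $\ell=\ell_{X}$ or $\ell=\ell_{-X}$ for the vectors $X\perp\ell$ in $L_{-m,h}$) or $0$, doubled (multiplied by $1+(-1)^{k}$) exactly when $h=-h\bmod L$. Thus it suffices to prove the identity obtained after dividing both sides by $C$.

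For $X$ with $Q(X)<0$ and $m/N$ a square, $X$ is orthogonal to precisely the two distinct isotropic lines $\ell_{X}$ and $\ell_{-X}=\tilde\ell_{X}$; for $X\perp\ell$ put $s(X,\ell)=+1$ if $\ell=\ell_{X}$ and $s(X,\ell)=-1$ if $\ell=\ell_{-X}$. As $\SL_{2}$ preserves the orientation of $V$, the function $s$ is $\Gamma$-invariant, and $s(-X,\ell)=-s(X,\ell)$. Summing the left-hand side over $\Gamma$-classes of $X$ first and using that $\overline{\Gamma}_{X}$ is trivial (so each $\Gamma$-class of $X\in L_{-m,h}$ contributes exactly the two pairs $(X,\ell_{X}),(X,\ell_{-X})$, with $s(X,\ell_{X})^{k}=1$ and $s(X,\ell_{-X})^{k}=(-1)^{k}$), and that $a^{\pm}_{\ell}(0)$ depends only on the $\Gamma$-class of $\ell$, one obtains
\begin{align*}
\sum_{X\in\Gamma\backslash L_{-m,h}}\bigl(a^{\pm}_{\ell_{X}}(0)+(-1)^{k}a^{\pm}_{\ell_{-X}}(0)\bigr)=\sum_{\substack{(X,\ell)/\Gamma\\ X\in L_{-m,h},\ X\perp\ell}}s(X,\ell)^{k}\,a^{\pm}_{\ell}(0).
\end{align*}

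Next I would evaluate the \emph{same} double sum by summing over $\Gamma$-classes of isotropic lines $\ell$ first, which turns it into an orbit count at each cusp. Writing everything via $\sigma_{\ell}$, the norm $-m$ vectors orthogonal to $\ell_{\infty}$ in $\sigma_{\ell}^{-1}(L+h)$ are the matrices $\left(\begin{smallmatrix}\pm d & b\\ 0 & \mp d\end{smallmatrix}\right)$ with $b$ in a fixed residue class modulo $\beta_{\ell}$ (depending on the sign and on $h$, and empty precisely when $b_{\ell,k}(m,h)=0$); the two sign choices for the diagonal entry land in the cosets $h$ and $-h$, on each of which $s(\,\cdot\,,\ell_{\infty})^{k}$ is constant, equal to $1$ resp. $(-1)^{k}$ and matching the sign appearing in \eqref{eq positive coefficients unary theta functions}; and the generator $\left(\begin{smallmatrix}1 & \alpha_{\ell}\\ 0 & 1\end{smallmatrix}\right)$ of $\sigma_{\ell}^{-1}\overline{\Gamma}_{\ell}\sigma_{\ell}$ acts on these matrices by $b\mapsto b\mp 2\alpha_{\ell}d$. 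Since this action preserves $\sigma_{\ell}^{-1}L$ (and $\Gamma$ acts trivially on $L'/L$), the quantity $2\varepsilon_{\ell}d=2\alpha_{\ell}d/\beta_{\ell}$ is a positive integer and equals the number of $\overline{\Gamma}_{\ell}$-orbits of such vectors with a fixed diagonal sign. Comparing with the ``$1$ versus $1+(-1)^{k}$'' dichotomy built into \eqref{eq positive coefficients unary theta functions} (the cases $h\neq-h$ and $h=-h$) gives, in all cases,
\begin{align*}
\sum_{X\in\Gamma_{\ell}\backslash\{X\in L_{-m,h}:\ X\perp\ell\}}s(X,\ell)^{k}=2\varepsilon_{\ell}d\cdot\frac{b_{\ell,k}(m,h)}{C}.
\end{align*}

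Putting the two evaluations together,
\begin{align*}
&\frac{1}{2\sqrt m}\sum_{X\in\Gamma\backslash L_{-m,h}}\bigl(a^{\pm}_{\ell_{X}}(0)+(-1)^{k}a^{\pm}_{\ell_{-X}}(0)\bigr)=\frac{1}{2\sqrt m}\sum_{\ell\in\Gamma\backslash\Iso(V)}a^{\pm}_{\ell}(0)\cdot 2\varepsilon_{\ell}d\cdot\frac{b_{\ell,k}(m,h)}{C}\\
&\qquad\qquad=\frac{d}{\sqrt m}\cdot\frac{1}{C}\sum_{\ell\in\Gamma\backslash\Iso(V)}\varepsilon_{\ell}a^{\pm}_{\ell}(0)b_{\ell,k}(m,h),
\end{align*}
and multiplying by $C$ and using $d/\sqrt m=1/\sqrt N$ gives the claim. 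The main obstacle is the cusp-wise bookkeeping in the third step: one must check that the unipotent generator shifts the off-diagonal entry by exactly $2\alpha_{\ell}d$, that $2\varepsilon_{\ell}d$ is the correct orbit count, and — most delicately — that the sign pattern coming from the orientation that distinguishes $\ell_{X}$ from $\ell_{-X}$, together with the $h=-h$ degeneracy, matches \eqref{eq positive coefficients unary theta functions} on the nose; everything else is formal. This is the weight-$k$ analogue of \cite{bif}, Lemma 8.2, and the argument above is an elaboration of the one given there for $k=0$.
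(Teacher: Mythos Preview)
Your argument is correct. The paper itself does not give a proof of this lemma but simply cites \cite{bif}, Lemma~8.2; your double-counting over $\Gamma$-classes of pairs $(X,\ell)$ with $X\in L_{-m,h}$ and $X\perp\ell$, together with the explicit cusp-wise orbit count via the unipotent action $b\mapsto b\mp 2\alpha_{\ell}d$, is exactly the expected extension of that argument to general $k$, and your handling of the sign $s(X,\ell)^{k}$ and the $h=-h$ degeneracy matches \eqref{eq positive coefficients unary theta functions} as required.
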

	
	This finishes the computation of the Fourier coefficients of $\ISh(G,\tau)$ of positive index.


\bibliographystyle{alpha}
\bibliography{bib.bib}

\end{document}